\newcommand{\ft}{Fourier transform}
\newcommand{\psdo}{pseudodifferential operator}
\newcommand{\tf}{time-frequency}
\newcommand{\aw}{Anti-Wick}
\newtheorem{theorem}{Theorem}[section]
\newtheorem{corollary}[theorem]{Corollary}
\newtheorem{definition}[theorem]{Definition}
\newtheorem{example}[theorem]{Example}
\newtheorem{proposition}[theorem]{Proposition}
\newtheorem{remark}[theorem]{Remark}
\newtheorem{lemma}[theorem]{Lemma}
\newtheorem*{thm*}{Theorem}
\newtheorem*{cor*}{Corollary}
\newcommand{\beqa}{\begin{eqnarray*}}
	\newcommand{\eeqa}{\end{eqnarray*}}
\DeclareMathOperator*{\supp}{supp}
\newcommand{\field}[1]{\mathbb{#1}}
\newcommand{\bR}{\field{R}}
\newcommand{\bN}{\field{N}}
\newcommand{\bZ}{\field{Z}}
\newcommand{\bC}{\field{C}}
\def\al{\alpha}
\def\la{\lambda}
\def\La{\Lambda}
\def\cF{\mathcal{F}}
\def\cS{\mathcal{S}}
\def\cH{\mathcal{H}}
\def\cB{\mathcal{B}}
\def\cG{\mathcal{G}}
\def\cM{\mathcal{M}}
\def\cI{\mathcal{I}}
\def\cC{\mathcal{C}}
\def\a{\aleph}
\def\hf{\hat{f}}
\def\rd{\bR^d}
\def\rdd{{\bR^{2d}}}
\def\zdd{{\bZ^{2d}}}
\def\lrd{L^2(\rd)}
\def\zd{\bZ^d}
\def\intrd{\int_{\rd}}
\def\intrdd{\int_{\rdd}}
\def\<{\left<}
\def\>{\right>}
\def\mv1{M_v^1}
\def\mpq{M^{p,q}}
\def\Mmpq{M_w^{p,q}}
\def\phas{(x,\omega )}
\newcommand{\abs}[1]{\lvert#1\rvert}
\newcommand{\norm}[1]{\lVert#1\rVert}
\def\o
\def\vu{\emptyset}
\def\o{\omega}
\def\a{\alpha}
\def\b{\beta}
\def\Ren{\mathbb{R}^d}
\def\Renn{\mathbb{R}^{2d}}
\def\Fur{\mathcal{F}}
\def\f{g}
\def\gaw{A_a^{\f_1,\f_2}}
\def\Sn2{S_{2}(L^{2}(\Ren))}
\def\S1{S_{1}(L^{2}(\Ren))}
\def\sig00{\sigma_{0,0}}
\def\la{\langle}
\def\ra{\rangle}
\newcommand{\aaf}{A_a^{\f_1,\f_2}}
\def\sfG{\mathsf{G}}
\def\sfLa{\mathsf{\Lambda}}
\def\dFou{\mathcal{F}_N}
\def\dGab{\sfG^{g_1,g_2}_{a}}
\newcommand{\dGabor}[4]{\sfG^{#1,#2}_{#3,#4}}
\def\scrA{\mathscr{A}}
\def\sfF{\mathsf{F}}
\def\sfA{\mathsf{S}}
\def\sfP{\mathsf{P}}
\def\AP{\sfA_\sfP^{BA}}
\DeclareSymbolFont{cyrletters}{OT2}{wncyr}{m}{n}
\DeclareMathSymbol{\Sha}{\mathalpha}{cyrletters}{"58}
\begin{document}
	\begin{abstract} We study the connection between STFT multipliers $A^{\f_1,\f_2}_{1\otimes m}$ having windows $\f_1,\f_2$, symbols $a\phas=(1\otimes m)\phas=m(\o)$, $\phas\in\rdd$, and the Fourier multipliers $T_{m_2}$ with  symbol $m_2$ on $\rd$. We find sufficient and necessary conditions on symbols $m,m_2$ and windows $\f_1,\f_2$ for the equality $T_{m_2}= A^{\f_1,\f_2}_{1\otimes m}$.
		For $m=m_2$ the former equality 	 holds only for particular choices of window functions in modulation spaces, whereas it never occurs in the realm of  Lebesgue spaces. In general, the STFT multiplier $A^{\f_1,\f_2}_{1\otimes m}$, also called localization operator, presents a smoothing effect due to the so-called \emph{two-window short-time Fourier transform} which enters in the definition of $A^{\f_1,\f_2}_{1\otimes m}$. As a by-product we prove necessary conditions for the continuity of  anti-Wick operators $A^{\f,\f}_{1\otimes m}: L^p\to L^q$  having multiplier $m$ in weak $L^r$ spaces. Finally, we exhibit the related results for their discrete counterpart: in this setting STFT multipliers are called Gabor multipliers whereas Fourier multiplier are better known as linear time invariant (LTI) filters.
	\end{abstract}
	
	\title[Comparisons between Fourier and STFT multipliers]{Comparisons between Fourier and STFT multipliers: the smoothing effect of the Short-time Fourier Transform}
	\author[\tiny P.Balazs]{Peter Balazs}
	\address{Acoustics Research Institute}
	\email{peter.balazs@oeaw.ac.at}
	\thanks{}
	\author[F. Bastianoni]{Federico Bastianoni}
	\address{Depatment of Mathematical Sciences, Politecnico di Torino, corso Duca degli Abruzzi 24, 10129 Torino, Italy}
	\email{federico.bastianoni@polito.it}
	\thanks{}
	\author[E. Cordero]{Elena Cordero}
	\address{Dipartimento di Matematica, Universit\`a di Torino, via Carlo Alberto 10, 10123 Torino, Italy}
	\email{elena.cordero@unito.it}
	\thanks{}

	\author[H.G. Feichtinger]{Hans G. Feichtinger}
	\address{Department of Mathematics, University of Vienna, Vienna, Austria}
		\email{hans.feichtinger@univie.ac.at}
	\thanks{}
		\author[N. Schweighofer]{Nina Schweighofer}
	\address{European Central Bank, Taunustor 2, 60311, Frankfurt am Main, Germany}
	\email{nina.schweighofer@ecb.europa.eu}
	\thanks{}
	
	\subjclass[2010]{47G30; 35S05; 46E35; 47B10}
	\keywords{Time-frequency analysis, localization operators, short-time Fourier transform, Lebesgue spaces, modulation spaces, Wiener amalgam spaces, STFT multipliers}
	\date{}
	\maketitle
\section{Introduction}

STFT multipliers, also called localization operators,  have been introduced  by Daubechies \cite{Daube88} and investigated by Ramanathan and Topiwala~\cite{RT93} as a mathematical tool to  restrict functions to a region in the  \tf\ plane and  to extract \tf\ features. Special instances of localization  operators are  the so-called ``Anti-Wick operators'', introduced earlier by Berezin~\cite{Berezin71} in the framework of quantum mechanics, details can be found in Shubin's book \cite{Shubin91}. For this reason, they have been
widely studied in signal
analysis and other applications~\cite{Fei-Now02,WongLocalization}. Their \emph{discrete versions} are known as Gabor multipliers \cite{Fei-Now02}. Motivated by the overlap-add implementation of convolution \cite{oppenheim}, in some signal processing software system linear filtering is implemented by using Gabor multipliers  instead of working out a convolution in time domain like in e.g. STx \cite{badelano00,Zala2017}. There are several reasons for doing that. First, they are easy
to implement. Then, quasi real time processing with finite support windows is possible so only a short delay will be produced. 
Moreover, the conceptual relation is clear, and the interpretation of a bandpass filter, i.e. masking unwanted frequency components is straight forward.
In many chosen settings the results also closely match the expectations (see Example \ref{sec:firstexampl0}). 
The problem of representation and approximation of linear operators by means
of Gabor multipliers (and suitable modifications) was studied by D{\"o}rfler and Torr\'{e}sani in \cite{DT2010}, further investigations are contained in \cite{Gibson2013,Toetalii}. 
More generally,  approximating  problems for \psdo s via STFT multipliers (``wave packets'' were exhibited in the work by Cordoba and Fefferman \cite{CF78}, see also Folland \cite{folland89} and the PhD thesis \cite{Nina}). In this paper, in contrast, we investigate under which conditions is it possible to write a filter as a Gabor multiplier exactly.


To motivate our result let us introduce the STFT multipliers by a time-frequency representation, the  short-time Fourier transform (STFT), as follows. First, recall    the  modulation $M_{\omega}$ and translation $T_{x}$ operators of a function $f$ on $\rd$:
\[
M_{\omega}f\left(t\right)= e^{2\pi it \omega}f\left(t\right),\qquad T_{x}f\left(t\right)= f\left(t-x\right),\quad \omega,x\in\rd.
\]
Fix a non-zero Schwartz function $\f\in\cS(\rd)\setminus\{0\}$.  We define the short-time Fourier transform of a tempered distribution $f\in\cS'(\rd)$ as
\begin{equation}\label{STFTdef}
V_{\f}f\phas=\la f, M_\o T_x\f\ra=\Fur (f\cdot\overline{T_x \f})(\omega)=\int_{\Ren}
f(y)\, {\overline {\f(y-x)}} \, e^{-2\pi iy \o }\,dy.
\end{equation}

The STFT multiplier $\aaf $ with symbol $a$, analysis window  $\f _1$, and synthesis window $\f _2$ is formally defined   to be
\begin{equation}
\label{eqi4}
\aaf f(t)=\int_{\Renn}a \phas V_{\f _1}f \phas M_\omega T_x \f _2 (t)
\,
dx d\omega.
\end{equation}
If $\f _1(t)  = \f _2 (t) = e^{-\pi t^2}$, then the STFT multiplier  $A_a = \aaf $ becomes  the
classical \aw\ operator. We recall that the mapping $a \mapsto \aaf  $ is  a quantization rule \cite{Berezin71,deGossonsymplectic2011,Shubin91,WongLocalization}.

For $\a,\beta>0$, consider the lattice $\Lambda=\a\zd\times \beta\zd$, then a Gabor multiplier 
with windows $\f_1,\f_2\in L^2(\rd)$ can formally be defined as
\begin{equation}\label{GB}
G_{a}^{\f_1,\f_2}f=\sum_{k,n\in\zdd} a(\a k,\beta n) V_{\f_1}f(\a k,\beta n) T_{\a k}M_{\beta n}\f_2,\quad f\in \lrd,
\end{equation}
 see also \cite{Fei-Now02,sc04-2}. Observe that a Gabor multiplier is the \emph{discrete} version of a STFT multiplier; in fact it can be obtained  from \eqref{eqi4} by replacing the Lebesgue measure $dx d\o$ with the discrete measure $\nu= \sum_{k,n\in\zd} \delta_{\a k,\beta n}$; the integration with respect to $\nu$ becomes the summation $$\intrdd F\phas d\nu(x,\o) = \sum_{k,n\in\zd} F(\a k,\beta n).$$
Note that this is a particular instance of a continuous frame multiplier, a (discrete) frame multiplier and their relation, see \cite{xllmult1, BalBayRah2012, balsto09new}.
This is also true for the next concept, Fourier multipliers.

Fourier multipliers \cite{BenGraGroOko2005}, also named linear time invariant (LTI) filters \cite{OppSch99}, are well known in both  partial differential equations and signal analysis. 
Precisely, a Fourier multiplier with \emph{multiplier} $m\in\cS'(\rd)$ is defined by
\begin{equation}\label{FM}
T_mf(t)=\cF^{-1}(m\cF f)(t)=\left(\cF^{-1}m\ast f\right)(t),		\quad f\in\cS(\rd).
\end{equation}
In Section \ref{6} we shall adopt the notation 
\begin{equation}\label{Eq-h}
	h=\cF^{-1}m
\end{equation}
both for the continuous and finite discrete setting. This is called the \emph{transfer function} in signal processing \cite{oppenheim}.\\
Such operator is a well-defined linear mapping from $\cS(\rd)$ to $\cS'(\rd)$.
Boundedness properties of Fourier multipliers  $T_m : L^p(\rd)\to L^q(\rd)$  are studied in the classical paper by H\"{o}rmander \cite{Hormander1960}.
The most important examples of Fourier multipliers can be obtained by taking $p=q=2$. Then $T_m$ is bounded if and only if the multiplier $m\in L^\infty(\rd)$ and $\|T_m\|_{B(L^2)} =\|m\|_\infty$.
For $p=q=1$ and $p=q=\infty$   the only bounded Fourier multipliers are Fourier transforms of bounded measures. For the  cases $p=q\in (1,\infty)\setminus\{2\}$ only sufficient conditions on $m$ are known. The assumptions $m\in L^\infty$ is necessary, though. The main result by  H\"{o}rmander in  \cite[Theorem 1.11]{Hormander1960}  (see also its generalization to locally compact groups \cite{Ruzh2020}) states:
\begin{theorem}\label{hormander}
 If $1<p\leq 2\leq q<\infty$, $m\in L^{r,\infty}(\rd)$ with \begin{equation}\label{hormindices}
1/q=1/r+1/p,
 \end{equation} then $T_m$ is bounded $T_m: L^p(\rd)\to L^q(\rd)$.  
\end{theorem}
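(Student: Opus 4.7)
The plan is to derive the statement via real interpolation between two elementary endpoint estimates for the convolution realization of $T_m$. Setting $h=\cF^{-1}m$ as in \eqref{Eq-h}, we have $T_m f = h\ast f$, so the problem reduces to studying the map $m\mapsto h\ast f$, which is linear in $m$ for each fixed $f$.

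Two endpoint estimates are available. First, Plancherel's theorem gives $\|T_m f\|_2 = \|m\wh f\|_2 \leq \|m\|_\infty \|f\|_2$, so $m\mapsto T_m$ is bounded from $L^\infty$ into $\mathcal{B}(L^2,L^2)$; this corresponds to the endpoint $r=\infty$, $p=q=2$. Second, if $m\in L^2$ then $h=\cF^{-1}m\in L^2$ by Plancherel, and Young's convolution inequality yields $\|h\ast f\|_{q_0}\leq \|h\|_2\|f\|_{p_0}$ whenever $1/p_0-1/q_0=1/2$, so $m\mapsto T_m$ is bounded from $L^2$ into $\mathcal{B}(L^{p_0},L^{q_0})$; this is the endpoint $r=2$. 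Both endpoints lie on the segment $1/p-1/q=1/r$ within the region $1<p\leq 2\leq q<\infty$.

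To interpolate between the two endpoints, I would apply the Hunt--Marcinkiewicz theorem in the Lorentz scale, or equivalently Stein's analytic interpolation theorem after embedding $L^{r,\infty}$ into a holomorphic family of $L^{r_z}$ spaces. For an intermediate parameter $\theta\in(0,1)$ with $1/r=\theta/2$, the two endpoint estimates interpolate to a bound for $m\in L^{r,\infty}$ at the intermediate pair $(p,q)$ satisfying $1/p-1/q=1/r$, which is precisely the relation in \eqref{hormindices}.

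The main obstacle is that the target of $m\mapsto T_m$ is an operator space $\mathcal{B}(L^p,L^q)$ whose exponents themselves vary along the interpolation segment. I would circumvent this by interpolating the bilinear form $B(m,f)=T_m f$ directly: the two endpoints $B:L^\infty\times L^2\to L^2$ and $B:L^2\times L^{p_0}\to L^{q_0}$ fall under the scope of bilinear Marcinkiewicz--Stein interpolation in the Lorentz scale, producing $B:L^{r,\infty}\times L^p\to L^q$ with $1/p-1/q=1/r$. Equivalently, one may follow H\"ormander's original argument in \cite{Hormander1960}: perform a layer-cake decomposition $m=\sum_{j\in\bZ} m\,\mathbf{1}_{\{2^j\leq |m|<2^{j+1}\}}$, bound each dyadic piece by the appropriate endpoint estimate, and sum the resulting weak-type estimates using the weak-$L^r$ control $|\{|m|>t\}|\lesssim t^{-r}\|m\|_{L^{r,\infty}}^r$.
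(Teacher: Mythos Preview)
The paper does not supply a proof of this statement: Theorem~\ref{hormander} is quoted from \cite[Theorem~1.11]{Hormander1960} and invoked as a black box (for instance in the proof of Theorem~\ref{contpq}). There is therefore no in-paper argument to compare your proposal against. (Incidentally, the printed relation \eqref{hormindices}, $1/q=1/r+1/p$, is incompatible with $p\le q$ unless $r=\infty$; the intended relation, which you in effect use, is $1/p-1/q=1/r$.)

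As for your sketch on its own merits: the strategy is reasonable, but your two declared endpoints, $r=\infty$ (Plancherel) and $r=2$ (Plancherel $+$ Young), interpolate only to $1/r=\theta/2\in[0,\tfrac12]$, hence $r\ge 2$. The hypotheses, however, permit $r\in(1,2)$ (take $p$ near $1$ and $q$ large), and that range is missed. You can repair this by adding endpoints via Hausdorff--Young, $\|\cF^{-1}m\|_{s}\le\|m\|_{s'}$ for $1<s'\le 2$, which together with Young's inequality gives strong bounds at $r=s'$ arbitrarily close to $1$. The ``bilinear Marcinkiewicz--Stein'' step is also stated loosely and is not an off-the-shelf theorem in the form you need; a cleaner route that avoids operator-valued interpolation is to chain the sharp Hausdorff--Young inequality $\cF:L^p\to L^{p',p}$, H\"older in Lorentz spaces against $m\in L^{r,\infty}$, and the dual estimate $\cF^{-1}:L^{q',q}\to L^q$. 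Your closing remark on the dyadic layer-cake decomposition is in fact the approach closest to the original argument in \cite{Hormander1960}.
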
 
Here $L^{r,\infty}(\rd)$ is the weak $L^r$-space, see \eqref{lrinfty}  in the Preliminaries below.   For example, every $m$ on $\rd$ with $|m(\o)|\leq C|\o|^{-d/r}$, $C>0$, satisfies $m\in L^{r,\infty}(\rd)$. For simplicity, we define $L^{\infty,\infty}(\rd):=L^\infty(\rd)$, so that, inserting $r=\infty$ in Theorem \ref{hormander}  we recapture the boundedness of the multiplier $T_m$ on $\lrd$.
\par
In signal analysis, both Gabor and Fourier multipliers are introduced  to \emph{localize} a signal; the former in the time-frequency space, the latter only in the frequency space.   Since, as we said above, Gabor multipliers are easy to be implemented numerically, 
the mathematical question is to determine under which conditions a Gabor multiplier  is equivalent to a linear time invariant filter. More generally, we aim at answering the following question: 

\vspace{0.1truecm}
\emph{Given a STFT multiplier $\gaw$ with symbol $a\phas=(1\otimes m)\phas=m(\o)$, $x,\o\in\rd$ ($a$ depends only on \emph{the frequency variable} $\o\in\rd$),  is it possible to write it in the form of a Fourier multiplier?}\par 
We study the equality
\begin{equation}\label{GPtilde}
A^{\f_1,\f_2}_{1\otimes m}=T_{m_2}\quad\mbox{on}\quad \cS(\rd).
\end{equation}
 
In order to give a flavour of our results, we need to introduce a function which correlates $\f_1$ and $\f_2$.
Recall the reflection operator $\cI$ of a function $f$ on $\rd$
\begin{equation}\label{refop}
\cI f(t)=f(-t),\quad t\in\rd.
\end{equation}
For $\f_1,\f_2\in\lrd$,  the \emph{window correlation function}   of the pair $(\f_1,\f_2)$ is defined by
\begin{equation}\label{tildeG}
\cC_{\f_1,\f_2}(y)=(\cI \f_2\ast  \bar{\f_1})(y),\quad y\in\rd
\end{equation}
(observe that ${\cC}_{\f_1,\f_2}$ is a continuous function on $\rd$). The window correlation function enjoys several properties depending on the function/distribution space of the windows $\f_1,\f_2$, cf. Proposition \ref{propwindow} in the sequel.

The equality \eqref{GPtilde} is  possible if and only if
\begin{equation}\label{fab}
m_2=m\ast \cF^{-1}(\cC_{\f_1,\f_2}),
\end{equation}
with $ m,m_2\in \cS'(\rd)$, $\f_1,\f_2\in\cS(\rd)$ or other suitable function spaces, see Theorem \ref{main}.

In particular, if we choose $m=m_2$   the equality \eqref{GPtilde} holds for any multiplier $m\in \cS(\rd)$ if and only if
\begin{equation}\label{G-uno}
{\cC}_{\f_1,\f_2}=1 \quad \mbox{in} \quad \cS'(\rd).
\end{equation}

Condition \eqref{G-uno} above is very restrictive, so that \eqref{GPtilde} never holds for classical anti-Wick operators, whose Gaussian windows  provide a smoothing effect we shall explain presently.

First, we recall that the H\"{o}rmander's condition $p\leq 2\leq q$ in Theorem \ref{hormander} is sharp. More precisely, if there exists a function $F$ such that $\{F>0\}$ has non-zero measure and for all $m:\rd \to\bR$ with $|m|\leq |F|$, $T_m : L^p(\rd) \to L^q (\rd)$ is bounded, then $p\leq 2\leq q$ (cf. \cite[Theorem 1.12]{Hormander1960}). Moreover, also \eqref{hormindices} is necessary by the $L^p$ inequalities for potentials (see \cite[pag. 119]{stein}).  We shall present a direct proof by rescaling arguments of the following  necessary condition (see Section \ref{Sec-4}): 
\begin{proposition}\label{hormandernec}
	For $p,q,r\in (1,\infty]$ we assume that the Fourier multiplier $T_m$ satisfies
	\begin{equation}\label{hormest}
	\|T_m f\|_q\leq C \|m\|_{L^{r,\infty}}\|f\|_p, \quad\mbox{for\,every\,}\,f,\,m\in \cS(\rd),
	\end{equation} 
	then we must have the indices' relation:
	\begin{equation}\label{suffcondanti-Wick}\frac1q\leq \frac1r+\frac1p.\end{equation}
\end{proposition}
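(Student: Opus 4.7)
The plan is a scaling argument: I rescale both the input $f$ and the multiplier $m$ by isotropic dilations and track how both sides of \eqref{hormest} transform. Set $D_\lambda f(t):=f(\lambda t)$ for $\lambda>0$. The basic homogeneities I will invoke are $\|D_\lambda f\|_{L^p}=\lambda^{-d/p}\|f\|_{L^p}$ and, for the weak-$L^r$ norm, $\|D_\mu m\|_{L^{r,\infty}}=\mu^{-d/r}\|m\|_{L^{r,\infty}}$, both following from the identity $|\{|D_\mu g|>s\}|=\mu^{-d}|\{|g|>s\}|$ (with the convention $1/r=0$ when $r=\infty$).

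The first step is to verify the intertwining relation
$$T_{D_\mu m}(D_\lambda f) \;=\; D_\lambda\bigl(T_{D_{\lambda\mu}m}\,f\bigr),$$
which is a direct consequence of \eqref{FM} and the substitution $\xi=\omega/\lambda$ in $\int m(\mu\omega)\,\widehat{D_\lambda f}(\omega)\,e^{2\pi it\omega}\,d\omega$, using $\widehat{D_\lambda f}(\omega)=\lambda^{-d}\hat{f}(\omega/\lambda)$. Taking $L^q$-norms and applying the hypothesis \eqref{hormest} to the pair $(D_\mu m, D_\lambda f)$ then yields
$$\lambda^{-d/q}\,\|T_{D_{\lambda\mu}m}f\|_q \;=\; \|T_{D_\mu m}(D_\lambda f)\|_q \;\leq\; C\,\mu^{-d/r}\lambda^{-d/p}\,\|m\|_{L^{r,\infty}}\|f\|_p.$$

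I next introduce the single parameter $\nu:=\lambda\mu$, substitute $\mu=\nu/\lambda$, and rearrange to
$$\|T_{D_\nu m}f\|_q \;\leq\; C\,\nu^{-d/r}\,\lambda^{\,d(1/r+1/q-1/p)}\,\|m\|_{L^{r,\infty}}\|f\|_p,$$
valid for every $\lambda>0$, while the left-hand side is independent of $\lambda$. Fixing any $f,m\in\cS(\rd)$ and $\nu>0$ for which $T_{D_\nu m}f\not\equiv 0$ (clearly possible, since a Fourier multiplier with a nontrivial Schwartz symbol cannot vanish on all of $\cS(\rd)$), a strictly positive exponent $1/r+1/q-1/p$ would let me send $\lambda\to 0^+$ and force $\|T_{D_\nu m}f\|_q=0$, a contradiction. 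Hence $1/r+1/q\leq 1/p$, and therefore $1/q\leq 1/p-1/r\leq 1/r+1/p$, which is \eqref{suffcondanti-Wick}. As a remark, the symmetric limit $\lambda\to\infty$ rules out a strictly negative exponent, so the same reasoning in fact delivers the sharper equality $1/p=1/r+1/q$, but the Proposition keeps only the weaker inequality.

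The single point requiring care is that a one-parameter dilation (of $f$ alone, or of $m$ alone) leaves too many free scales on the right-hand side and cannot by itself pin down any index relation; the essential ingredient is to exploit two independent dilations, condensed via $\nu=\lambda\mu$ into a one-parameter family indexed by $\lambda$. Everything else is bookkeeping of exponents and works uniformly in $r\in(1,\infty]$ with the convention $1/\infty:=0$.
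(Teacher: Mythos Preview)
Your argument is correct. Both proofs are scaling arguments, but the paper takes a more concrete route: it fixes the specific one-parameter family $m=f=e^{-\pi\lambda t^2}$, computes $T_{m_\lambda}\varphi_\lambda$ explicitly, and sends $\lambda\to 0^+$ to extract exactly the inequality $1/q\le 1/r+1/p$. Your approach is abstract and uses two independent dilations, $D_\lambda$ on $f$ and $D_\mu$ on $m$, together with the intertwining identity $T_{D_\mu m}(D_\lambda f)=D_\lambda(T_{D_{\lambda\mu}m}f)$; this decouples the scales and, after the substitution $\nu=\lambda\mu$, leaves a free parameter $\lambda$ on the right-hand side alone. The payoff is that you obtain the stronger constraint $1/r+1/q\le 1/p$ (and, by the symmetric limit $\lambda\to\infty$, the full scaling identity $1/p=1/q+1/r$), from which the stated inequality follows trivially since $1/p-1/r\le 1/p+1/r$. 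The paper's single-parameter Gaussian test cannot recover this equality: tying the dilation of $f$ and $m$ together loses one degree of freedom. So your route is both more elementary (no explicit Gaussian computations) and strictly more informative, at the cost of a slightly more careful bookkeeping of the two dilation parameters.
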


In this paper we also investigate the smoothing effects of the anti-Wick operator  $A^{\f,\f}_{1\otimes m}$ with respect to the corresponding  Fourier multiplier $T_m$.  It can be stated as follows (see the proof in Section \ref{4}). Please note the similarity (and differences) to H\"{o}rmander's result, Theorem \ref{hormander}).
\begin{theorem}\label{contpqwick}
	If $1<p\leq 2\leq q<\infty$,  $m\in L^{r,\infty}(\rd)$ with indices satisfying \eqref{suffcondanti-Wick}, 
	then the anti-Wick operator $A^{\f,\f}_{1\otimes m}$ is bounded from $L^p(\rd)$ into $L^q(\rd)$.
\end{theorem}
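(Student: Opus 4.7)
The plan is to reduce the anti-Wick operator to an ordinary Fourier multiplier via Theorem \ref{main} and then apply H\"ormander's Theorem \ref{hormander}. Take $\f(t)=e^{-\pi t^2}$. Since $\f$ is a real, even Schwartz function, the window correlation function
$$\cC_{\f,\f}=\cI\f\ast\bar\f=\f\ast\f$$
is again a Gaussian, and therefore $h:=\cF^{-1}(\cC_{\f,\f})\in\cS(\rd)$; in particular $h\in L^t(\rd)$ for every $t\in[1,\infty]$. Theorem \ref{main}, applied with $\f_1=\f_2=\f$, then yields the identification $A^{\f,\f}_{1\otimes m}=T_{m_2}$ on $\cS(\rd)$, where $m_2=m\ast h$.

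Next, I would control $m_2$ in a weak Lebesgue space by means of the O'Neil--Young convolution inequality for Lorentz spaces: for every $t\in[1,\infty]$,
$$\|m\ast h\|_{L^{r_2,\infty}(\rd)}\leq C\,\|m\|_{L^{r,\infty}(\rd)}\,\|h\|_{L^{t}(\rd)}$$
provided $1+1/r_2=1/r+1/t$. Since $h$ is Schwartz, $t$ is effectively a free parameter and I can realize any $r_2$ compatible with this single linear relation.

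Finally, I would choose $r_2$ so that H\"ormander's relation \eqref{hormindices} for $T_{m_2}:L^p(\rd)\to L^q(\rd)$ is satisfied, namely $1/q=1/r_2+1/p$. Plugging this into Young's relation produces $1/t=1+1/q-1/p-1/r$; the constraint $1/t\leq 1$ is exactly the hypothesis \eqref{suffcondanti-Wick}, while $1/t\geq 0$ holds in the admissible parameter range $1<p\leq 2\leq q<\infty$ with $r\geq 1$. Hence such a $t$ exists, $m_2\in L^{r_2,\infty}(\rd)$ at the exponent required by H\"ormander, and Theorem \ref{hormander} delivers the desired boundedness $T_{m_2}:L^p(\rd)\to L^q(\rd)$. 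The hard part will be precisely this index bookkeeping: the hypothesis \eqref{suffcondanti-Wick} is the feasibility condition that allows Young's and H\"ormander's relations to be satisfied simultaneously. Conceptually, the smoothing effect of the anti-Wick operator is encoded in the convolution $m\mapsto m\ast h$ with the Schwartz kernel $h$, which promotes $m\in L^{r,\infty}$ to a weak Lebesgue space with a strictly better index than the one required directly from H\"ormander applied to $T_m$.
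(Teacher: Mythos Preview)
Your argument is correct and it is a genuinely different route from the paper's. The paper does not convert $A^{\f,\f}_{1\otimes m}$ into a single Fourier multiplier with an improved symbol; instead it simply invokes Theorem~\ref{contpq}, noting that the Gaussian correlation function $\cC_{\f,\f}(t)=e^{-\pi t^2/2}$ lies in $L^{p'}\cap L^\infty$ so that \eqref{WCFlp} holds. In that theorem the smoothing is placed on the \emph{input}: one writes $A^{\f,\f}_{1\otimes m}f(t)=T_m\bigl(f\,T_t\cC_{\f,\f}\bigr)(t)$, observes that $f\,T_t\cC_{\f,\f}\in L^s$ for every $1\le s\le p$ with a $t$-uniform bound, and then applies H\"ormander's Theorem~\ref{hormander} from $L^s$ to $L^q$, the freedom in $s$ absorbing exactly the slack allowed by \eqref{suffcondanti-Wick}. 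Your route leaves the input in $L^p$ and instead transfers the smoothing to the \emph{symbol}: via Theorem~\ref{main} you obtain $A^{\f,\f}_{1\otimes m}=T_{m\ast h}$ with $h=\cF^{-1}\cC_{\f,\f}\in\cS$, and O'Neil's convolution inequality upgrades $m\in L^{r,\infty}$ to $m_2\in L^{r_2,\infty}$ at the precise index H\"ormander requires. Both proofs exploit the same Schwartz regularity of $\cC_{\f,\f}$, just on opposite sides of the duality. Your version is arguably more transparent: it produces an honest Fourier multiplier and a single global application of Theorem~\ref{hormander}, avoiding the $t$-dependent input and the implicit pointwise evaluation in the paper's argument; the price is that you need O'Neil's inequality rather than elementary H\"older/interpolation. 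One caution on the bookkeeping: your claim that $1/t\ge 0$ is automatic hinges on $1/p\ge 1/q$, so be sure the sign in H\"ormander's index relation is read correctly when you plug in.
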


The previous result holds true for more general STFT multipliers $A^{\f_1,\f_2}_{1\otimes m}$ with $\f_1,\f_2\in\cS'(\rd)$ such that the window correlation function satisfies ${\cC}_{\f_1,\f_2}\in L^2(\rd)\cap L^\infty(\rd)$, cf. Theorem \ref{contpq} in Section \ref{4} below.
For $p=2$, the boundedness of the Fourier multiplier $T_m$ in Theorem \ref{hormander} forces the indices' choice: $q=2$ and $r=\infty$, whereas condition in \eqref{suffcondanti-Wick} is more flexible, allowing to choose $q\geq 2$ and  $r\leq\infty$.

The necessity of condition \eqref{suffcondanti-Wick} for anti-Wick operators is proved in Theorem \ref{neccWick}.

We conjecture  that other possible smoothing effects could be shown by replacing $L^p$ and $L^{r,\infty}$ with Wiener amalgam spaces (cf. \cite{feichtinger-modulation}). This will be the subject of future investigations.

The connection between Fourier and Gabor multipliers was studied earlier by Weisz in \cite{Weisz2008}. The focus is different and  can be viewed in our framework as follows: if a symbol $m$ gives rise to a Fourier multiplier $T_m$ which is bounded on $L^p(\rd)$, then the STFT multiplier $A_{1\otimes m}^{\f_1,\f_2}$ is also bounded on $L^p$ (and more generally, on Wiener amalgam spaces cf. \cite[Theorem 8]{Weisz2008}), provided the windows are \emph{smooth} enough, that is, are included in suitable Wiener amalgam spaces containing the modulation space $M^1$.
\par 
For applications, we will study the finite dimensional discrete setting, considering signals $f \in \bC^N$ in Section \ref{6}, which is an extension of \cite[Chapter 2]{Nina}. The problems under investigation are similar to the ones for the continuous setting, the tools at hand however are sometimes different. 


\section{Preliminaries}
{\bf Notations.} In this paper $\hookrightarrow$ denotes the continuous embeddings of function spaces. The conjugate exponent $p^{\prime }$ of $p\in \lbrack
1,\infty ]$ is defined by $1/p+1/p^{\prime }=1$.

 For $r\in [1,\infty)$, the \emph{ weak $L^r$ space}  $L^{r,\infty}(\rd)$ is the space of measurable functions $f:\rd\to\bC$ such that 
\begin{equation}\label{lrinfty}
\|f\|_{L^{r,\infty}}:= \sup_{\a>0} \a\lambda_f(\a)^{\frac1r} <\infty,
\end{equation}
where $\lambda_f(\a):=\mu(\{t\in\rd: |f(t)|>\a \})$, $\a>0$,  with $\mu$ being the Lebesgue measure (see, e.g., \cite{triebel2010theory}).  

Note that  the quantity in \eqref{lrinfty} is only a quasi-norm.   \par 
For convenience,  we  write $L^{\infty,\infty}(\rd):= L^\infty(\rd)$.  Observe that weak $L^r$ spaces are special instances of \emph{Lorentz spaces}  and  $L^r(\rd)\subseteq L^{r,\infty}(\rd)$, $1\leq r\leq\infty$.

 For $t=(t_1,\ldots,t_d), \o=(\o_1,\ldots,\o_d)\in \rd,$ the inner product is denoted by $t\o=t\cdot \o=t_1\o_1+\ldots+ t_d\o_d$. So that we adopt the notation $t^2=\abs{t}^2=t^1_1+\ldots+t^2_d$. For $f\in L^1(\rd)$ the \ft \, is normalized to be $$\cF f(\o)=\hat{f}(\o)=\intrd e^{-2\pi i t\o} f(t) \,dt.$$

{\bf Weight functions.}
We denote by $v$  a
continuous, positive, even,  submultiplicative  weight function on $\rd$, i.e., 
$ v(z_1+z_2)\leq v(z_1)v(z_2)$, for all $ z_1,z_2\in\Ren$.
We say that $w\in \mathcal{M}_v(\rd)$  if $w$ is a positive, continuous, even  weight function  on $\Ren$  which is
	$v$-moderate, i.e.
$ w(z_1+z_2)\leq Cv(z_1)w(z_2)$,  for all $z_1,z_2\in\Ren$ and some $C>0$.
We will mainly work with polynomial weights of the type
\begin{equation}\label{vs}
v_s(z)=\la z\ra^s =(1+|z|^2)^{s/2},\quad s\in\bR
\end{equation}
(for $s<0$, $v_s$ is $v_{|s|}$-moderate).\par 
Given two weight functions $w_1,w_2$ on $\rd$, we write $$(w_1\otimes w_2)(x,\o):=w_1(x)w_2(\o),\quad x,\o\in \rd.$$

{\bf Modulation spaces.}  These spaces were introduced by the author of \cite{feichtinger-modulation}, where many of their properties were already investigated. Nowadays, they are treated in many textbooks, see, e.g. \cite{beok20,Elena-book}. For a general extension to the quasi-Banach setting on locally compact Abelian groups we mention the recent \cite{BasCor2021}.\par 
	Fix a non-zero window $g$ in the Schwartz class $\cS(\rd)$, a weight $w\in\mathcal{M}_v$ and $1\leq p,q\leq \infty$. The modulation space $M^{p,q}_w(\rd)$ consists of all tempered distributions $f\in\cS'(\rd)$ such that the norm 
	\begin{equation}\label{norm-mod}
	\|f\|_{M^{p,q}_w}=\|V_gf\|_{L^{p,q}_w}=\left(\intrd\left(\intrd |V_g f \phas|^p w\phas^p dx  \right)^{\frac qp}d\o\right)^\frac1q 
	\end{equation}
	(natural changes with $p=\infty$ or $q=\infty)$ is finite. 
 If $p=q$, we
write $M^p_w(\rd)$ instead of $M^{p,p}_w(\rd)$; if $w\equiv1$, we write $M^{p,q}(\rd)$ in place of $M^{p,q}_1(\rd)$.

The space $M^{p,q}(\mathbb{R}^d)$ is a Banach space whose definition is
independent of the choice of the window $g\in\cS(\rd)$,  that is, different
non-zero window functions in the Schwartz class yield equivalent norms. Furthermore, the window class can be extended to the modulation space $M^1(\rd)$, also known as \emph{Feichtinger's algebra}, see \cite{ja18}. The modulation space $%
M^{\infty,1}(\rd)$ is also called \emph{Sj\"ostrand's class} \cite{Sjostrand1}.
%

For any $p,q\in \lbrack
1,\infty ]$, the inner product $\langle \cdot {,}\cdot \rangle_{L^2(\rd)}=\langle \cdot {,}\cdot \rangle$ restricted to $\cS
(\mathbb{R}^{d})\times \cS(\mathbb{R}^{d})$ extends to a continuous
sesquilinear map $M^{p,q}(\mathbb{R}^{d})\times M^{p^{\prime },q^{\prime }}(%
\mathbb{R}^{d})\rightarrow \mathbb{C}$.

 For $1\leq p,q<\infty$, the duality property for $\mpq(\rd)$  is given by 
$$(\Mmpq)'(\rd)= M^{p',q'}_{1/w}(\rd),$$
with $p',q'$ being the conjugate exponents and
\begin{equation}\label{duality}
\la f, g\ra =\intrdd V_h f(z) \overline{ V_h g}(z)dz,\quad f\in\Mmpq(\rd), g\in M^{p',q'}_{1/w}(\rd),
\end{equation}
$w\in\mathcal{M}_v$, for any fixed $h\in M^1_v(\rd)\setminus\{0\}$.
Observe that  H\"{o}lder's inequality  for $L^{p,q}_m$ spaces let us write, for every $1\leq p,q\leq\infty$, 
\begin{equation}\label{Holdermodulation}
|\la f, g\ra| \leq \|f\|_{\Mmpq}\|g\|_{M^{p',q'}_{1/w}}, \quad f\in\Mmpq(\rd), g\in M^{p',q'}_{1/w} (\rd).
\end{equation}
We denote by $\mathcal{M}^{p,q}_w(\rd)$
the closure of $\cS(\rd)$ in the  $M^{p,q}_w$-norm. Observe
that $\mathcal{M}^{p,q}_w(\rd)=M^{p,q}_w(\rd)$, whenever the indices $p$ and
$q$ are finite. 
Notice that these spaces  enjoy the duality property
$(\mathcal{M}^{p,q}_w)'=\mathcal{M}^{p',q'}_{1/w}$, with $1\leq
p,q\leq\infty$.
Modulation spaces satisfy the following inclusion properties: 
\begin{equation}
\mathcal{S}(\mathbb{R}^{d})\hookrightarrow M^{p_{1},q_{1}}_w(\mathbb{R}%
^{d})\hookrightarrow M^{p_{2},q_{2}}_w(\mathbb{R}^{d})\hookrightarrow \mathcal{S}^{\prime
}(\mathbb{R}^{d}),\quad p_{1}\leq p_{2},\,\,q_{1}\leq q_{2},
\label{modspaceincl1}
\end{equation}%
  Moreover, $\mathcal{S}(\mathbb{R}^{d})$ is dense in $M^{p,q}_w(\mathbb{R}^d)$ whenever $p<\infty$ and $q<\infty$.
In what follows, we recall the convolution properties for modulation spaces, cf.  \cite[Proposition 2.4.19]{Elena-book}.
\begin{proposition}\label{propconv} 
Let $\nu (\omega )>0$ be  an  even weight function  on $\Ren$. Furthermore let  $1\leq
p,q,r,t,u,\gamma\leq\infty$, with
\begin{equation}\label{Holderindices}
\frac 1u+\frac 1t\geq \frac 1\gamma,
\end{equation}
and 
\begin{equation}\label{Youngindicesrbig}
\frac1p+\frac1q\geq 1+\frac1r.
\end{equation}
For given $w\in\mathcal{M}_v(\rdd)$,   let $w_1$ and $w_2$ be the restriction to $\Ren\times\{0\}$ and  $\{0\}\times\Ren$ respectively, i.e $w_1(x)
\coloneqq w(x,0) $ and $w_2(\omega ) \coloneqq w(0,\omega )$. Define $v_1$ and $v_2$ in a similar way. If $f\in M^{p,u}_{w_1\otimes \nu}(\Ren)$, $h\in M^{q,t}_{v_1\otimes
	v_2\nu^{-1}}(\Ren)$ then  $f\ast h\in M^{r,\gamma}_w(\Ren)$
with  norm inequality  \begin{equation}\label{mconvm}\| f\ast h \|_{M^{r,\gamma}_w}\lesssim
\|f\|_{M^{p,u}_{w_1\otimes \nu}}\|h\|_{ M^{q,t}_{v_1\otimes
		v_2\nu^{-1}}}.\end{equation}
\end{proposition}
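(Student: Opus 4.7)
The plan is to prove the convolution inclusion by lifting the problem to the STFT side, where the convolution in the physical variable decouples from the product/pointwise structure in the frequency variable.

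\medskip

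\textbf{Step 1 (STFT convolution identity).} Fix a nonzero window $g\in\cS(\rd)$ that factors as $g=g_1\ast g_2$ with $g_1,g_2\in\cS(\rd)$ (for instance take $g_1$ a Gaussian and solve for $g_2\in\cS$ via the Fourier transform). A Fubini-type computation gives, for $f,h\in\cS(\rd)$ and every $(x,\omega)\in\rdd$,
\begin{equation*}
V_g(f\ast h)(x,\omega)=\bigl(V_{g_1}f(\cdot,\omega)\ast_x V_{g_2}h(\cdot,\omega)\bigr)(x),
\end{equation*}
because substituting $y=s-u$ in the iterated integral yields $\int\overline{g_1(u)g_2(s+t-x-u)}\,du=\overline{g(s+t-x)}$, which reproduces the integrand of $V_g(f\ast h)$. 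Taking absolute values gives a pointwise bound of $|V_g(f\ast h)|$ by the convolution in $x$ of $|V_{g_i}|$'s for each fixed $\omega$.

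\medskip

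\textbf{Step 2 (Young in the space variable).} By weighted Young's inequality on $\rd$, for each fixed $\omega$,
\begin{equation*}
\|V_g(f\ast h)(\cdot,\omega)\|_{L^r_{w_1}}\lesssim \|V_{g_1}f(\cdot,\omega)\|_{L^p_{w_1}}\,\|V_{g_2}h(\cdot,\omega)\|_{L^q_{v_1}},
\end{equation*}
using that $w_1$ is $v_1$-moderate (an immediate consequence of $w\in\mathcal{M}_v(\rdd)$). The Young hypothesis $\tfrac1p+\tfrac1q\ge1+\tfrac1r$ is reduced to the equality case by replacing one of the source modulation spaces by a larger one through the embeddings \eqref{modspaceincl1}; this is harmless because modulation-space embeddings in $p,q$ require no support/decay assumption.

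\medskip

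\textbf{Step 3 (Hölder in the frequency variable).} Taking the $L^\gamma$-norm in $\omega$ against the weight $w_2$, and splitting $w_2(\omega)\le C\,\nu(\omega)\cdot v_2(\omega)\nu(\omega)^{-1}$, Hölder's inequality with exponents $u$ and $t$ such that $\tfrac1u+\tfrac1t\ge\tfrac1\gamma$ gives
\begin{equation*}
\bigl\|\|V_g(f\ast h)(\cdot,\omega)\|_{L^r_{w_1}}\bigr\|_{L^\gamma_{w_2}}\lesssim \bigl\|\|V_{g_1}f(\cdot,\omega)\|_{L^p_{w_1}}\bigr\|_{L^u_{\nu}}\,\bigl\|\|V_{g_2}h(\cdot,\omega)\|_{L^q_{v_1}}\bigr\|_{L^t_{v_2\nu^{-1}}}.
\end{equation*}
As for Young, the inequality $\tfrac1u+\tfrac1t\ge\tfrac1\gamma$ is reduced to equality via the second-index embedding of modulation spaces. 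Unwinding the mixed-norm definition \eqref{norm-mod} and the $v$-moderateness bound $w(x,\omega)\le C\,w_1(x)\,v_2(\omega)$, the right-hand side is exactly $\|f\|_{M^{p,u}_{w_1\otimes\nu}}\|h\|_{M^{q,t}_{v_1\otimes v_2\nu^{-1}}}$, while the left-hand side bounds $\|f\ast h\|_{M^{r,\gamma}_w}$ by the window-independence of the modulation norm. A density argument extends the inequality from $\cS(\rd)$ to the full modulation spaces.

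\medskip

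\textbf{Main obstacle.} The delicate point is the bookkeeping of the weights: one must verify that $w_1$ is $v_1$-moderate (for Step 2) and that $w_2\le C\,\nu\cdot(v_2/\nu)$ with the proper constants (for Step 3), both flowing from the moderateness of $w$ on $\rdd$; and one must justify the reduction of the $\ge$ conditions \eqref{Holderindices}--\eqref{Youngindicesrbig} to equality through modulation-space inclusions without inadvertently inflating the norm bound.
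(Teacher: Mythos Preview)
Your approach is the standard one and is essentially correct; note, however, that the paper does not actually prove this proposition---it is recalled from \cite[Proposition 2.4.19]{Elena-book} (originally \cite[Proposition 2.4]{EleCharly2003}), where exactly the argument you outline (STFT convolution identity $V_{g_1\ast g_2}(f\ast h)=V_{g_1}f(\cdot,\omega)\ast_x V_{g_2}h(\cdot,\omega)$, then Young in $x$ and H\"older in $\omega$) is carried out.

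One bookkeeping slip to fix: in Step~3 you take the outer $L^\gamma$-norm against $w_2$, but the moderateness splitting you invoke at the end is $w(x,\omega)\le C\,w_1(x)\,v_2(\omega)$, which puts the weight $v_2$ (not $w_2$) on the frequency side. With that correction the factorization $v_2=\nu\cdot(v_2\nu^{-1})$ in H\"older produces precisely the target norms $M^{p,u}_{w_1\otimes\nu}$ and $M^{q,t}_{v_1\otimes v_2\nu^{-1}}$, and the argument closes. The density step also needs a word when one of $p,u,q,t$ equals $\infty$; in the cited references this is handled either by a direct estimate on the STFT of the convolution (which makes sense for tempered distributions) or by the closure spaces $\mathcal{M}^{p,q}$ and duality.
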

\begin{proposition}\label{2.4}
	Consider  $1\leq p,q\leq 	\infty$, with $p',q' $ being conjugate exponents of $p,q$, respectively. \\
	  (i) For $1\leq p,q\leq \infty$,  $f\in\mathcal{M}^{p,q}(\rd)$, $h\in \mathcal{M}^{p',q'}(\rd)$, we have that $f\ast h\in \cC_0(\rd)$.\\
	  (ii) For $1<p,q<\infty$,  $f\in\mpq(\rd)$, $h\in M^{p',q'}(\rd)$, we have that $f\ast h\in \cC_0(\rd)$. \\
	   (iii) If either  $f\in M^{\infty,1}(\rd)$ and $h\in M^{1,\infty}(\rd)$ or $f\in M^1(\rd)$ and $h\in M^\infty(\rd)$, then $f\ast h\in \cC_b(\rd)$. 
\end{proposition}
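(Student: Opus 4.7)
The plan is to reduce all three statements to a single convolution estimate coming from Proposition \ref{propconv}, together with the standard embedding $M^{\infty,1}(\rd) \hookrightarrow \cC_b(\rd)$; the $\cC_0$ conclusions in (i) and (ii) then follow by a density argument.

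First I would apply Proposition \ref{propconv} with trivial weights and output exponents $r=\infty$, $\gamma=1$ to obtain, in each case, the inclusion $f\ast h\in M^{\infty,1}(\rd)$. Concretely, for (i) and (ii) the choice $(p,u,q,t)=(p,q,p',q')$ yields $1/p+1/p'=1\geq 1+1/\infty$ and $1/q+1/q'=1\geq 1/1$, which are exactly the hypotheses \eqref{Youngindicesrbig} and \eqref{Holderindices}; for (iii) the two admissible pairs $(p,u,q,t)=(\infty,1,1,\infty)$ and $(p,u,q,t)=(1,1,\infty,\infty)$ satisfy the same two inequalities. Since $M^{\infty,1}(\rd)$ is the Sj\"ostrand class and embeds continuously into $\cC_b(\rd)$, this already gives (iii).

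To upgrade from $\cC_b$ to $\cC_0$ in (i), I would exploit the very definition of $\mathcal{M}^{p,q}(\rd)$ as the closure of $\cS(\rd)$: pick sequences $f_n,h_n\in\cS(\rd)$ with $f_n\to f$ in $M^{p,q}$ and $h_n\to h$ in $M^{p',q'}$. A standard bilinear splitting $f_n\ast h_n - f\ast h = (f_n-f)\ast h_n + f\ast(h_n-h)$ combined with the convolution estimate above forces $f_n\ast h_n\to f\ast h$ in $M^{\infty,1}$, hence uniformly on $\rd$. Since each $f_n\ast h_n\in\cS(\rd)\subset\cC_0(\rd)$ and $\cC_0(\rd)$ is closed under uniform convergence, $f\ast h\in\cC_0(\rd)$. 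Part (ii) is then immediate: the assumption $1<p,q<\infty$ forces both $p',q'<\infty$, so $\mathcal{M}^{p,q}=M^{p,q}$ and $\mathcal{M}^{p',q'}=M^{p',q'}$, and (i) applies verbatim.

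The main subtlety is precisely the dichotomy between (i)--(ii) and (iii): the density of $\cS(\rd)$ in the modulation spaces at hand is exactly what permits the upgrade $\cC_b\leadsto\cC_0$; when one of the two windows lives in a modulation space in which Schwartz functions fail to be dense, as happens in (iii) for $M^{\infty,1}$, $M^{1,\infty}$ and $M^\infty$, only the weaker conclusion $f\ast h\in\cC_b(\rd)$ can be drawn. Nothing beyond Proposition \ref{propconv} and the closure property of $\mathcal{M}^{p,q}$ is needed.
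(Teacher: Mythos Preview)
Your proof is correct and follows essentially the same architecture as the paper's: density of $\cS(\rd)$ plus a bilinear splitting to pass from Schwartz convolutions to the general case in (i)--(ii), and the embedding $M^{\infty,1}(\rd)\hookrightarrow\cC_b(\rd)$ for (iii). The only difference is that for the uniform bound in (i)--(ii) the paper invokes the duality estimate $|f\ast h(t)|=|\langle f,\overline{T_t\cI h}\rangle|\leq\|f\|_{M^{p,q}}\|h\|_{M^{p',q'}}$ directly (cf.\ \eqref{Holdermodulation}), whereas you route the same step through Proposition~\ref{propconv} and the embedding $M^{\infty,1}\hookrightarrow L^\infty$; both yield the required uniform convergence.
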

\begin{proof} These results are well known, see \cite{fei1981} and \cite{fei1983}. For sake of clarity we provide a direct proof.\par
$(i)$  Using the density of $\cS(\rd)$ in both spaces we can find sequences $\{f_n\},\{h_n\}\in\cS(\rd)$ such that $\|f_n-f\|_{M^{p,q}}\to 0$ and $\|h_n-h\|_{M^{p',q'}}\to 0$, now $f_n\ast h_n\in\cS(\rd)\hookrightarrow \cC_0(\rd)$ so that, using $$|f\ast h(t)|=|\la f,\overline{ T_t\cI(h)}\ra|\leq \|f\|_{\mpq}\|\overline{T_t\cI(h)}\|_{M^{p',q'}}=\|f\|_{\mpq}\|h\|_{M^{p',q'}},\quad \forall t\in\rd,$$
\begin{align*}\|f_n\ast h_n-f\ast h\|_{L^\infty}&\leq \|f_n\ast(h_n-h)\|_{L^\infty}+\|(f_n-f)\ast h\|_{L^\infty}\\&\leq \|f_n\|_{\mpq}\|h_n-h\|_{M^{p',q'}}+\|f_n-f\|_{\mpq}\|h\|_{M^{p',q'}}.
\end{align*}
Hence $f\ast h\in\cC_0(\rd)$. Item $(ii)$ is obtained by the same argument as in $(i)$.\par
$(iii)$ Using the convolution relations of Proposition \ref{propconv} we infer
$$M^{\infty,1}(\rd)\ast M^{1,\infty}(\rd)\hookrightarrow M^{\infty,1}(\rd)\,\,\mbox{and}\,\,\, M^{1}(\rd)\ast M^{\infty}(\rd)\hookrightarrow M^{\infty,1}(\rd).$$  
It follows immediately from the definition of the modulation space $M^{\infty,1}(\rd)$ that
\begin{equation}\label{minfty1cont}M^{\infty,1}(\rd)\subset (\cF L^1(\rd))_{loc}\cap L^\infty(\rd)\subset \cC_b(\rd)
\end{equation}
and we are done. 
\end{proof}
\begin{remark}
We observe that the convolution relations
$$M^1(\rd)\ast M^\infty(\rd)\subset \cC_b(\rd)$$
	where already  shown in \cite[Lemma 8]{FN2005}. 
\end{remark}

Here we show an optimal result for $\mpq$-boundedness (and in particular $L^2$-boundedness) of STFT multipliers. We extend Theorem 5.2 in \cite{Wignersharp2018} and Theorem 1.1 in \cite{EleCharly2003}. 

\begin{theorem}\label{class}
	Consider $s\geq0$,  $p_1,p_2,q_1,q_2\in [1,\infty]$, with $1/p_1+1/p_2\geq 1$, $1/q_1+1/q_2\geq 1$. If $\f_1\in M^{p_1,q_1}_{v_s}(\rd)$, $\f_2\in M^{p_2,q_2}_{v_{s}}(\rd)$, and  $a\in M^{\infty,1}(\rdd)$,
	then	$\aaf $ is  bounded  on every $\mpq_{v_s}(\rd)$, $p,q\in [1,\infty]$. In particular, the operator $\aaf$ is bounded on the Shubin-Sobolev space $\mathcal{Q}_s:=M^2_{v_s}$ (In particular, for $s=0$, $\aaf $ is bounded on $L^2(\rd)$).
\end{theorem}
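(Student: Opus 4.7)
The plan is to reduce the boundedness of $\aaf$ to the classical (Gröchenig--Heil/Toft) boundedness theorem for Weyl pseudodifferential operators whose symbols lie in the weighted Sjöstrand class $M^{\infty,1}_{1\otimes v_s}(\rdd)$. There are essentially four steps.

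First, I would invoke the standard identification of a STFT multiplier with a Weyl operator: starting from the definition \eqref{eqi4} and rewriting the time-frequency shifts via the cross-Wigner distribution, one obtains
$$\aaf = \Opw(\sigma), \qquad \sigma = a * W(\f_2,\f_1).$$
This is a purely formal computation, valid first for Schwartz data and then extended by duality once the relevant symbol class has been identified.

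Second, I would prove the cross-Wigner estimate
$$W(\f_2,\f_1) \in M^{1,\infty}_{1\otimes v_s}(\rdd), \qquad \|W(\f_2,\f_1)\|_{M^{1,\infty}_{1\otimes v_s}} \lesssim \|\f_1\|_{M^{p_1,q_1}_{v_s}}\|\f_2\|_{M^{p_2,q_2}_{v_s}},$$
valid whenever $1/p_1+1/p_2\geq 1$ and $1/q_1+1/q_2\geq 1$. The strategy is to compute $V_\Phi W(\f_2,\f_1)$ with a tensor-product window $\Phi$ and express it as a product of STFTs $V_{\f_i}\f_j$; taking absolute values and applying Hölder's inequality in the mixed Lebesgue spaces $L^{p,q}_{v_s}$ over phase space produces the required bound, with the two index conditions being exactly what makes the Hölder step feasible. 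The weight $v_s$ gets absorbed symmetrically from both windows onto the frequency variable of the Wigner side.

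Third, I would combine the two previous steps via the convolution relations of Proposition \ref{propconv}: the hypothesis $a\in M^{\infty,1}(\rdd)$ together with $W(\f_2,\f_1)\in M^{1,\infty}_{1\otimes v_s}(\rdd)$ gives
$$\sigma = a*W(\f_2,\f_1) \in M^{\infty,1}_{1\otimes v_s}(\rdd),$$
i.e. $\sigma$ lies in the weighted Sjöstrand class. Finally, invoking the well-known result that Weyl operators with symbols in $M^{\infty,1}_{1\otimes v_s}(\rdd)$ are bounded on every modulation space $M^{p,q}_{v_s}(\rd)$, $p,q\in[1,\infty]$, concludes the proof. Specialising to $p=q=2$ recovers the $\cQ_s$-boundedness, and further to $s=0$ the $L^2$-boundedness.

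The delicate point is Step 2: one must keep track of where the weight $v_s$ ends up on the Wigner side, since the pseudodifferential boundedness theorem requires the weight attached only to the frequency variable of the symbol. Any misplacement (e.g. ending up with $v_s\otimes 1$ instead of $1\otimes v_s$) would break the final invocation of the Sjöstrand--type boundedness and force a genuinely different, more restrictive index range.
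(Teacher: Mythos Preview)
Your proposal is correct and follows essentially the same route as the paper: rewrite $\aaf$ as a Weyl operator with symbol $\sigma=a\ast W(\f_2,\f_1)$, use the cross-Wigner estimate $W(\f_2,\f_1)\in M^{1,\infty}_{1\otimes v_s}(\rdd)$ (which the paper cites from \cite{E-Wigner-QBanach} rather than sketching), apply Proposition~\ref{propconv} to land $\sigma$ in $M^{\infty,1}_{1\otimes v_s}(\rdd)$, and conclude via the weighted Sj\"ostrand boundedness theorem for Weyl operators (cited from \cite{Wignersharp2018}). Your extra remark about the placement of the weight $1\otimes v_s$ versus $v_s\otimes 1$ is a genuine point of care, and the paper handles it simply by quoting the right reference for the Wigner estimate.
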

\begin{proof}
	If $\f_1\in M^{p_1,q_1}_{v_s}(\rd)$, $\f_2\in M^{p_2,q_2}_{v_{s}}(\rd)$ with $1/p_1+1/p_2\geq 1$, $1/q_1+1/q_2\geq 1$, by  \cite[Theorem 4]{E-Wigner-QBanach} we infer that their cross-Wigner distribution 
	\begin{equation*}
		W(\f_2,\f_1)\phas=\intrd \f_2\left(x+\frac t2\right)\overline{\f_1\left(x-\frac t2\right)}e^{-2\pi i t\o}\,dt
	\end{equation*}
	is in $M^{1,\infty}_{1\otimes v_s}(\rdd)$. Rewriting the STFT multiplier $\gaw$ as a Weyl operator $L_\sigma$ with $\sigma=a\ast W(\f_2,\f_1)$, the convolution relations for modulation spaces in Proposition \ref{propconv} give
	$$\sigma\in M^{\infty,1}(\rdd)\ast M^{1,\infty}_{1\otimes v_s}(\rdd)\hookrightarrow M^{\infty,1}_{1\otimes v_s}(\rdd). $$
	The result follows by the continuity properties of Weyl operators in  \cite[Theorem 5.2]{Wignersharp2018}.
\end{proof}
 
For sake of completeness let us recall \cite[Corollary 4.2]{EleCharly2003}:

\begin{proposition}
	If $a\in M^\infty(\rdd)$ and $g_1,g_2\in M^1_v(\rd)$, $w\in\cM_v$, then $\aaf $ is bounded on $M^{p,q}_w(\rd)$ for $1\leq p,q\leq \infty$. In particular, it is bounded on $L^2(\rd)$.
\end{proposition}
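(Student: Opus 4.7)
The strategy is to extend the formal definition \eqref{eqi4} to a symbol $a\in M^\infty(\rdd)$ via the weak sesquilinear pairing
\[
\langle \aaf f, h\rangle = \langle a,\,V_{g_1}f\cdot \overline{V_{g_2}h}\rangle,
\]
interpreted as the duality between $M^\infty(\rdd)$ and its predual $M^1(\rdd)$. For $f\in M^{p,q}_w(\rd)$ and a test function $h$ in $\mathcal{M}^{p',q'}_{1/w}(\rd)$, the inequality $|\langle a,F\rangle|\leq \|a\|_{M^\infty}\|F\|_{M^1}$ reduces the entire argument to the bilinear STFT bound
\[
\|V_{g_1}f\cdot\overline{V_{g_2}h}\|_{M^1(\rdd)}\lesssim \|f\|_{M^{p,q}_w}\,\|h\|_{M^{p',q'}_{1/w}},\qquad g_1,g_2\in M^1_v(\rd).
\]
Combining this bound with the duality $(\mathcal{M}^{p',q'}_{1/w})'=M^{p,q}_w$, and with the density of $\cS(\rd)$ in $M^{p,q}_w$ when $p,q<\infty$, one deduces that $\aaf$ is bounded on every $M^{p,q}_w(\rd)$; the case $p=q=2$, $w\equiv 1$ then recovers the $L^2$-boundedness.

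The bilinear estimate is the core of the argument. To prove it, I would pick a tensor-product window $\Gamma=\gamma_1\otimes\gamma_2\in M^1_v(\rdd)$ and compute $V_\Gamma(V_{g_1}f\cdot \overline{V_{g_2}h})$ directly from the definitions. Unrolling the STFT on $\Refn$ and rearranging the integrations through a linear change of variables that separates the $f$- and $h$-factors, one obtains an identity of the shape
\[
V_\Gamma(V_{g_1}f\cdot\overline{V_{g_2}h})(X,\Xi)=\,e^{i\Phi(X,\Xi)}\,V_{\tilde g_1}f(X_1,\Xi_1)\,\overline{V_{\tilde g_2}h(X_2,\Xi_2)}
\]
up to a unimodular phase $e^{i\Phi}$, where $(X_1,\Xi_1,X_2,\Xi_2)$ is an invertible affine image of $(X,\Xi)\in\Refn$ and the auxiliary windows $\tilde g_j \in M^1_v(\rd)$ are produced from $g_j,\gamma_j$ via \tf\ shifts and pointwise multiplications---both operations leaving $M^1_v$ invariant. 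Integrating the modulus of this expression over $\Refn$ and applying H\"older's inequality on the mixed $L^{p,q}_w$ spaces, with the cancellation $w\cdot w^{-1}=1$ in the inner integrals, bounds it by $\|V_{\tilde g_1}f\|_{L^{p,q}_w}\,\|V_{\tilde g_2}h\|_{L^{p',q'}_{1/w}}\lesssim \|f\|_{M^{p,q}_w}\|h\|_{M^{p',q'}_{1/w}}$.

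The main technical obstacle is the bookkeeping in this factorization: the substitution in $\Refn$ that disentangles the time and frequency coordinates is somewhat intricate, and one must verify that each window-manipulation step indeed keeps the auxiliary windows inside $M^1_v(\rd)$. These two points are standard and rely, respectively, on the invariance of $M^1_v$ under \tf\ shifts and on the convolution/multiplication structure of $M^1$ (as reflected by Proposition \ref{propconv}). With the bilinear estimate in hand, the proposition follows at once by the duality argument outlined above.
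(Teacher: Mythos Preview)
The paper does not supply a proof of this proposition; it is simply quoted from \cite[Corollary~4.2]{EleCharly2003}. Your overall architecture---the weak definition $\langle\aaf f,h\rangle=\langle a,\overline{V_{g_1}f}\,V_{g_2}h\rangle$ followed by the bilinear estimate $\|\overline{V_{g_1}f}\,V_{g_2}h\|_{M^1(\rdd)}\lesssim\|f\|_{M^{p,q}_w}\|h\|_{M^{p',q'}_{1/w}}$---is indeed the route taken in \cite{EleCharly2003}, but your execution of the bilinear step has a genuine gap.

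The factorization you assert for $V_\Gamma\bigl(V_{g_1}f\cdot\overline{V_{g_2}h}\bigr)$ with a \emph{tensor} window $\Gamma=\gamma_1\otimes\gamma_2$ does not hold. If you expand both inner STFTs and integrate first in the frequency variable, the remaining $x$-integral has integrand $\overline{g_1(t-x)}\,g_2(s-x)\,\overline{\gamma_1(x-x_0)}\,e^{-2\pi i x\xi}$: a triple product in $x$ that couples the variable $t$ (carrying $f$) to the variable $s$ (carrying $h$), and no affine change of coordinates on $\Refn$ separates them. Nor are the putative auxiliary windows obtained from $g_j,\gamma_j$ merely by \tf\ shifts and pointwise multiplications. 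The factorization that actually works requires a window adapted to the Heisenberg-group structure of the STFT: one takes $\Phi=W(\varphi_1,\varphi_2)$---a cross-Wigner distribution, \emph{not} a tensor product---and uses the identity expressing $V_\Phi\bigl(W(h,f)\bigr)(z,\zeta)$ as a phase times $V_{\varphi_1}h\cdot\overline{V_{\varphi_2}f}$ evaluated at linear images of $(z,\zeta)$; this is the key lemma in \cite{EleCharly2003}. It is also exactly the device behind the paper's proof of the preceding Theorem~\ref{class}: one writes $\aaf=L_\sigma$ with Weyl symbol $\sigma=a\ast W(g_2,g_1)$, deduces $W(g_2,g_1)\in M^1_{1\otimes v}(\rdd)$ from that factorization, obtains $\sigma\in M^\infty\ast M^1_{1\otimes v}\hookrightarrow M^{\infty,1}_{1\otimes v}$ via Proposition~\ref{propconv}, and concludes by the symbol calculus for Weyl operators on weighted modulation spaces.
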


\subsection{Correlation functions}\label{3}
For $\f_1,\f_2\in\lrd$, let us introduce the so-called \emph{shifted window correlation function} of the pair $(\f_1, \f_2)$:
\begin{equation}\label{CF}
\mathbb{G}_{\f_1,\f_2}(t,y)=\intrd \f_2(t-u)\overline{\f_1(y-u)}\, du.
\end{equation}
It is straightforward to show that $\mathbb{G}_{\f_1,\f_2}\in L^\infty(\rdd)$. 
Observe that the definition of $\mathbb{G}_{\f_1,\f_2}$ also works for windows $\f_1,\f_2$ belonging to function/distributions spaces other than $L^2(\rd)$ (see Proposition \ref{propwindow}). \par

We can rewrite the shifted window correlation function $\mathbb{G}_{\f_1,\f_2}$ on $\rdd$  as a \emph{time shift} of the mapping ${\cC}_{\f_1,\f_2}$ on $\rd$ defined in \eqref{tildeG}.

In fact,  a straightforward computation shows that 
\begin{equation}\label{tilde-conv}
\mathbb{G}_{\f_1,\f_2}(t,y)= {\cC}_{\f_1,\f_2}(y-t)=T_t{\cC}_{\f_1,\f_2}(y), \quad t,y\in\rd.
\end{equation}
Let us study the properties of  ${\cC}_{\f_1,\f_2}$.
\begin{proposition}\label{propwindow}
	The window correlation function ${\cC}_{\f_1,\f_2}$ enjoys the following properties.
	\begin{itemize}
		\item [$(i)$] If $\f_1,\f_2\in\cS(\rd)$, then ${\cC}_{\f_1,\f_2}\in\cS(\rd)$.
		\item [$(ii)$] If either $\f_1$ is in $\cS'(\rd)$ and $\f_2\in\cS(\rd)$ or $\f_1$ is in $\cS(\rd)$ and $\f_2\in\cS'(\rd)$  then ${\cC}_{\f_1,\f_2}\in\cC(\rd)$ with at most polynomial growth.
		\item [$(iii)$] If $\f_1\in L^p(\rd)$, $\f_2\in L^{p'}(\rd)$, with $1<p<\infty$,  $1/p+1/p'=1$, then ${\cC}_{\f_1,\f_2}\in \cC_0(\rd)$. If either $p=1$ ($p'=\infty$) or $p=\infty$ ($p'=1$) then ${\cC}_{\f_1,\f_2}\in \cC_b(\rd)$.
		The same statements hold if we replace the Lebesgue space $L^p(\rd)$ (resp. $L^{p'}(\rd)$) with the modulation space $M^p(\rd)$ (resp.  $M^{p'}(\rd)$).
		\item [$(iv)$] 
		If $\f_1\in M^{p,u}_{w_1\otimes \nu}(\Ren)$, $\f_2\in M^{q,t}_{v_1\otimes
			v_2\nu^{-1}}(\Ren)$, with $1\leq
		p,q,r,t,u,\gamma\leq\infty$ satisfying \eqref{Holderindices} and \eqref{Youngindicesrbig}, and the weights as in the assumptions of Proposition \ref{propconv}, then  ${\cC}_{\f_1,\f_2}$ is in $\in M^{r,\gamma}_w(\Ren)$,
		with  norm inequality  $$\|{\cC}_{\f_1,\f_2}\|_{M^{r,\gamma}_w}\lesssim
		\|\f_1\|_{M^{p,u}_{w_1\otimes \nu}}\|\f_2\|_{ M^{q,t}_{v_1\otimes
				v_2\nu^{-1}}}.$$
	\end{itemize}
\end{proposition}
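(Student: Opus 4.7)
The strategy is to exploit the single identity $\cC_{\f_1,\f_2}=\cI\f_2\ast\bar{\f_1}$ and to reduce every item to a known convolution theorem. Reflection $\cI$ and complex conjugation are isometric involutions on each space that appears below---$\cS(\rd)$, $\cS'(\rd)$, $L^p(\rd)$, and $M^{p,q}_w(\rd)$ with even weight $w\in\mathcal{M}_v$ (the evenness of $v$, $w$ and $\nu$ is the standing assumption)---so it suffices to apply a suitable convolution result to the pair $(\cI\f_2,\bar{\f_1})$ and relabel norms at the end.

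First I would dispatch $(i)$ and $(ii)$ by citing standard facts. For $(i)$, $\cS\ast\cS\subset\cS$ is immediate. For $(ii)$, the convolution of a tempered distribution with a Schwartz function is a $C^\infty$ function of at most polynomial growth; this handles both sub-cases after using $\cI$ or conjugation to swap which factor is the distribution.

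For $(iii)$ in the Lebesgue setting with $1<p<\infty$, I would approximate $\f_1$ and $\f_2$ by Schwartz sequences $\f_1^{(n)},\f_2^{(n)}$ and apply H\"{o}lder's inequality to estimate
\[
\|\cC_{\f_1^{(n)},\f_2^{(n)}}-\cC_{\f_1,\f_2}\|_\infty\lesssim \|\f_1^{(n)}-\f_1\|_p\|\f_2^{(n)}\|_{p'}+\|\f_1\|_p\|\f_2^{(n)}-\f_2\|_{p'},
\]
displaying $\cC_{\f_1,\f_2}$ as a uniform limit of Schwartz---hence $\cC_0$---functions; since $\cC_0(\rd)$ is closed in $L^\infty$, the conclusion follows. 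At the endpoints $p\in\{1,\infty\}$ only one of $L^p$, $L^{p'}$ contains $\cS$ as a dense subset, so the same scheme gives only $\cC_b$: boundedness by H\"{o}lder, continuity by dominated convergence. The modulation-space variant of $(iii)$ is a direct reading of Proposition \ref{2.4}: the diagonal $p=q$ of item $(ii)$ therein yields $M^p\ast M^{p'}\hookrightarrow\cC_0$ for $1<p<\infty$, while item $(iii)$ handles the endpoint pair $M^1\ast M^\infty\hookrightarrow\cC_b$.

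Finally, $(iv)$ follows from Proposition \ref{propconv} applied (after swapping, using $\cI\f_2\ast\bar{\f_1}=\bar{\f_1}\ast\cI\f_2$) to $f=\bar{\f_1}\in M^{p,u}_{w_1\otimes\nu}$ and $h=\cI\f_2\in M^{q,t}_{v_1\otimes v_2\nu^{-1}}$; the hypotheses \eqref{Holderindices} and \eqref{Youngindicesrbig} are exactly those inherited by the statement, and the norm inequality is preserved under $\cI$ and conjugation thanks to evenness. The only real obstacles are bookkeeping: in $(iii)$ to extract $\cC_0$ rather than merely $\cC_b$ at interior exponents (handled by density of $\cS$), and in $(iv)$ to align the asymmetric weight hypothesis of Proposition \ref{propconv} with the two windows (handled by commutativity of convolution). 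No deeper difficulty is expected; the proof is essentially a dictionary translation from the definition of $\cC_{\f_1,\f_2}$ into the existing convolution calculus.
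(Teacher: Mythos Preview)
Your proposal is correct and follows essentially the same approach as the paper: both reduce every item to a known convolution theorem after observing that $\cI$ and complex conjugation are isometries on all the spaces involved (the paper invokes the textbook facts for $\cS$, $\cS'$, $L^p$ and then Propositions~\ref{2.4} and~\ref{propconv} for the modulation-space cases, exactly as you do). Your write-up is in fact more explicit---you spell out the density/approximation argument for the $L^p$ part of $(iii)$ where the paper merely cites Folland/Heil---but there is no substantive difference in strategy.
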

\begin{proof}
	The proofs of items $(i)$, $(ii)$ follow by the convolution properties for the Schwartz class $\cS$, its dual $\cS'$ respectively, see, e.g., the textbooks \cite{Folland,Heil}. Item $(iii)$ is a consequence of the convolution properties for $L^p(\rd)$ spaces which can be found e.g., in \cite{Folland,Heil}. For modulation spaces $M^p$ we use the convolution properties in Proposition \ref{2.4}. \par
	$(iv)$. By assumption all the weights under consideration are even, so that $\cI \f_2\in  M^{q,t}_{v_1\otimes
		v_2\nu^{-1}}(\Ren)$ whenever $\f_2\in M^{q,t}_{v_1\otimes
		v_2\nu^{-1}}(\Ren)$. Moreover modulation spaces are closed under complex conjugation, hence the result immediately follows by applying the convolution relations in Proposition \ref{propconv}.
\end{proof}

\begin{example}\label{correlGauss} In what follows we exhibit examples of window correlation functions.\\
	(i) Consider  two $L^2$-normalized Gaussian functions $\f_1(t)=\f_2(t)= 2^{d/4}e^{-\pi t^2}$, $t\in\rd$. In this case, the window correlation function ${\cC}_{\f_1,\f_2}$ in \eqref{tildeG} is a Gaussian as well
	\begin{equation}\label{correlGauss2}
	{\cC}_{\f_1,\f_2}(t)=\cI(\f_1\ast \cI(\hat{\f_2}))(t)=2^{d/2}(e^{\pi (\cdot)^2 }\ast e^{\pi (\cdot)^2})(-t)=e^{-\frac{\pi}{2}t^2},\quad t\in\rd.
	\end{equation}
	(ii) Consider $\f_1=\chi_{[0,1]^d}$,  $\f_2(t)=1$, for every $t\in\rd$. Observe $\f_1\in  L^1(\rd)$, $\f_2 \in L^\infty(\rd)$. Then the window correlation function becomes 
	$${\cC}_{\f_1,\f_2}(t)=\f_1\ast \cI( \bar{\f_2})(-t) =\int_{[0,1]^d} dy= 1,\quad  \forall t\in\rd.$$
\end{example}
\subsection{$L^{r,\infty}$ quasi-norms of rescaled Gaussians}
\begin{lemma}
	For $r\in [1,\infty)$, $\lambda >0$ and $\f(t)=e^{-\pi t^2}$, $t\in\rd$, we consider the rescaled Gaussians $\f_\lambda(t):=e^{-\pi \lambda t^2}$. Then we have
	\begin{equation}
	\|\f_\lambda\|_{L^{r,\infty}(\rd)}=\frac{\left(\frac{d}{2r}\right)^{\frac{d}{2r}}}{\Gamma(\frac d2 +1)\lambda^{\frac{d}{2r}}} e^{-\frac{d}{2r}}.
	\end{equation}
	Hence,
	\begin{equation}
	\|\f_\lambda\|_{L^{r,\infty}(\rd)}= C(d,r) \lambda^{-\frac{d}{2r}},
	\end{equation}
	with $C(d,r)=e^{-\frac{d}{2r}} \left(\frac{d}{2r}\right)^{\frac{d}{2r}}\Gamma(\frac d2 +1)^{-1}$. 
\end{lemma}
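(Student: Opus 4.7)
The plan is to compute the distribution function $\lambda_{\f_\lambda}$ explicitly and then perform a one-variable optimization to evaluate the supremum in the definition of the weak $L^r$ quasi-norm.

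First I would identify the superlevel sets. Since $\f_\lambda(t)=e^{-\pi\lambda t^2}\leq 1$, for $\alpha\geq 1$ the set $\{|\f_\lambda|>\alpha\}$ is empty. For $0<\alpha<1$ the inequality $e^{-\pi\lambda|t|^2}>\alpha$ is equivalent to $|t|<R(\alpha)$ with $R(\alpha):=\sqrt{-\log\alpha/(\pi\lambda)}$, so the superlevel set is an open Euclidean ball. Using the standard volume formula $\mathrm{vol}(B(0,R))=\pi^{d/2}R^d/\Gamma(d/2+1)$, I obtain
\begin{equation*}
\lambda_{\f_\lambda}(\alpha)=\frac{\pi^{d/2}}{\Gamma(d/2+1)}R(\alpha)^d=\frac{(-\log\alpha)^{d/2}}{\Gamma(d/2+1)\,\lambda^{d/2}},\qquad 0<\alpha<1.
\end{equation*}

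Next, inserting this into the definition \eqref{lrinfty} yields
\begin{equation*}
\|\f_\lambda\|_{L^{r,\infty}}=\frac{1}{\Gamma(d/2+1)^{1/r}\lambda^{d/(2r)}}\sup_{0<\alpha<1}\alpha\,(-\log\alpha)^{d/(2r)}.
\end{equation*}
I would then reduce the supremum to a one-dimensional optimization via the substitution $u=-\log\alpha\in(0,\infty)$, obtaining $g(u)=e^{-u}u^{d/(2r)}$. An elementary differentiation gives the unique critical point $u^{*}=d/(2r)$, which is a global maximum since $g(0^{+})=0$, $g(\infty)=0$, and $g>0$ on $(0,\infty)$. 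Hence
\begin{equation*}
\sup_{u>0}g(u)=g(u^{*})=\Bigl(\tfrac{d}{2r}\Bigr)^{d/(2r)}e^{-d/(2r)},
\end{equation*}
and combining the two displays produces the claimed explicit value of $\|\f_\lambda\|_{L^{r,\infty}(\rd)}$. The homogeneity statement $\|\f_\lambda\|_{L^{r,\infty}}=C(d,r)\lambda^{-d/(2r)}$ then follows immediately by reading off the $\lambda$-dependence, with $C(d,r)$ defined as stated.

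There is no genuine obstacle here: the argument is pure calculus. The only point worth double-checking is the dependence on $r$ of the $\Gamma$-factor, which enters through raising $\lambda_{\f_\lambda}(\alpha)$ to the power $1/r$ (and therefore should appear with that power in the prefactor). Apart from this minor bookkeeping, the proof is a routine combination of the ball-volume formula and the maximization of $e^{-u}u^{d/(2r)}$.
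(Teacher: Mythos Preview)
Your approach is exactly that of the paper: compute the distribution function via the ball-volume formula and then optimize $\alpha(\log(1/\alpha))^{d/(2r)}$ over $0<\alpha<1$ (your substitution $u=-\log\alpha$ is just a cosmetic reparametrization of the same maximization). Your caveat about the $\Gamma$-factor is well taken: raising $\lambda_{\f_\lambda}(\alpha)$ to the power $1/r$ does produce $\Gamma(d/2+1)^{-1/r}$ rather than $\Gamma(d/2+1)^{-1}$, so the exponent on the $\Gamma$-factor in the stated constant appears to be a slip in the paper (the paper's own proof makes the same slip when passing from $A_\lambda^{1/r}$ to the final expression); this is immaterial for the intended application, which only uses $\|\f_\lambda\|_{L^{r,\infty}}\asymp\lambda^{-d/(2r)}$.
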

\begin{proof}
	Observe that for $\alpha\geq 1$ we have $\{t:\, |\f_\lambda(t)|>\alpha\}=\varnothing$. For $0<\alpha<1$, $\{t:\, |\f_\lambda(t)|>\alpha\}=\{t:\, |t|<\pi^{-1/2}\lambda^{-1/2}(\log (1/\alpha))^{{1/2}}\}$. The Lebesgue measure of the set is given by
	$$A_\lambda:=\mu(\{t:\, |t|<\pi^{-1/2}\lambda^{-1/2}(\log (1/\alpha))^{{1/2}}\}) =\frac{\log (1/\alpha)^{\frac{d}{2}}}{\Gamma(\frac d2 +1)\lambda^{\frac{d}{2}}}.$$
		Now, using the definition of the quasi-norm in \eqref{lrinfty},
		\begin{align*}
		\|\f_\lambda\|_{L^{r,\infty}(\rd)}&=\sup_{\alpha>0} \alpha\mu(\{t:\, |\f_\lambda(t)|>\alpha\})^{\frac1r}\\
		&= \sup_{0<\alpha<1} \alpha A_\lambda^{\frac1r}\\
		&=\frac{1}{\Gamma(\frac d2 +1)\lambda^{\frac{d}{2r}}}  \sup_{0<\alpha<1} \alpha \left(\log(1/\alpha)\right)^{\frac {d}{2r}}.
		\end{align*}
		An easy computation shows that the function $y(\alpha):= \alpha \left(\log(1/\alpha)\right)^{\frac {d}{2r}}$ on $(0,1)$ admits the maximum point $t_M:=e^{-\frac{d}{2r}}$ and the maximum is $y(t_M)=(d/(2r))^{2/(2r)} e^{-2/(2r)}$,
		so that we obtain the claim.
\end{proof}
We observe that in the $L^{r,\infty}$ spaces the rescaled Gaussians behave like in the usual $L^r$ spaces, meaning $\|\f_\lambda\|_r\asymp \|\f_\lambda\|_{L^{r,\infty}}\asymp \lambda^{-d/(2r)}$. 
\section{Study the equality $A^{\f_1,\f_2}_{1\otimes m}=T_{{m_2}}$.}
The following issue can be viewed as the answer of the question raised in the introduction.
\begin{theorem}\label{main}
	Fix multiplier symbols  $m,m_2\in \cS'(\rd)$ (resp. $m,m_2\in M^\infty(\rd)$) and  windows $\f_1,\f_2$ in $\cS(\rd)$ (resp. in $M^1(\rd)$). Then    the equality 
	\begin{equation}\label{loc-mult}
	A^{\f_1,\f_2}_{1\otimes m} =T_{m_2}\quad\mbox{on}\quad \cS(\rd)\,\, (\mbox{resp.} \,\, M^1(\rd))
	\end{equation}
	holds if and only if 
	\begin{equation}\label{link-tilde}
	m_2= {m} \ast\cF ^{-1}({\cC}_{\f_1,\f_2})\quad \mbox{in} \quad \cS'(\rd)\,\, (\mbox{resp.} \,\, M^\infty(\rd)).
	\end{equation}
	The same conclusions hold under the following assumptions:\\
	(i) The  symbols $m,m_2$ in $\cS(\rd)$  (resp. in $M^1(\rd)$) and the window functions $(\f_1,\f_2)$ in $\cS'(\rd)\times \cS(\rd)$ (resp. $M^\infty(\rd)\times M^1(\rd)$);\\
	(ii) The  symbols $m,m_2$ in $\cS(\rd)$ (resp. in $M^1(\rd)$) and the window functions  $(\f_1,\f_2)\in \cS(\rd)\times \cS'(\rd)$  (resp. $M^1(\rd)\times M^\infty(\rd)$).
\end{theorem}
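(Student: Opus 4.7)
My strategy is to rewrite $A^{\f_1,\f_2}_{1\otimes m}$ as a convolution operator whose kernel factors as the transfer function $h=\cF^{-1}m$ multiplied by (a reflection of) the window correlation function $\cC_{\f_1,\f_2}$, and then read off the associated Fourier multiplier symbol. Once this identity is established, the equivalence with \eqref{link-tilde} is a consequence of the uniqueness of Fourier multiplier symbols.

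I would first treat the clean case $f,\f_1,\f_2,m\in\cS(\rd)$, where every integral converges absolutely and Fubini is immediate. Unfolding the definition of $A^{\f_1,\f_2}_{1\otimes m}$ in \eqref{eqi4} with the symbol $a\phas=m(\o)$ and integrating in $x$ first, one recognizes the inner $x$-integral as the shifted window correlation $\mathbb{G}_{\f_1,\f_2}(t,y)$, which by \eqref{tilde-conv} equals $\cC_{\f_1,\f_2}(y-t)$. This yields
\begin{equation*}
A^{\f_1,\f_2}_{1\otimes m} f(t) \;=\; \int_{\rd} m(\o)\, e^{2\pi i t\o}\int_{\rd} f(y)\,\cC_{\f_1,\f_2}(y-t)\,e^{-2\pi i y\o}\,dy\,d\o.
\end{equation*}
A change of variable $u=y-t$ factors out the $t$-dependence and a second (Parseval-type) swap of integrals, together with the elementary identity $\cF\circ\cI=\cF^{-1}$, leads to
\begin{equation*}
A^{\f_1,\f_2}_{1\otimes m} f \;=\; \bigl(h\cdot \cI\cC_{\f_1,\f_2}\bigr)\ast f \;=\; T_{m\ast \cF^{-1}\cC_{\f_1,\f_2}}\,f,
\end{equation*}
where $h=\cF^{-1}m$. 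This proves the ``if'' direction with the explicit symbol \eqref{link-tilde}.

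For the converse, assume $A^{\f_1,\f_2}_{1\otimes m}=T_{m_2}$ on $\cS(\rd)$. Combined with the identity just proved, we obtain $T_{m_2}f=T_{m\ast\cF^{-1}\cC_{\f_1,\f_2}}f$ for every $f\in\cS(\rd)$, which on the Fourier side reads $(m_2-m\ast\cF^{-1}\cC_{\f_1,\f_2})\cdot\hat f=0$ in $\cS'(\rd)$ for every $f\in\cS$. Since $\hat f$ exhausts $\cS(\rd)$, this forces \eqref{link-tilde} as an equality in $\cS'(\rd)$.

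To cover the remaining combinations of spaces, I would rerun the same chain of identities while tracking, at each step, the function space in which every ingredient lives. Proposition \ref{propwindow} guarantees that $\cC_{\f_1,\f_2}$ is in $\cS(\rd)$, in $M^1(\rd)$, or is a continuous function of polynomial growth, according to the case; Proposition \ref{propconv} together with the Fourier invariance of $M^1(\rd)$ then ensures that $m\ast\cF^{-1}\cC_{\f_1,\f_2}$ is a well defined element of $\cS'(\rd)$ or $M^\infty(\rd)$, as claimed. The main obstacle, and where real care is required, is the rigorous justification of the two integral swaps when $m$ is only a tempered distribution or only in $M^\infty$: I would handle this by interpreting both sides through the weak pairings $\langle A^{\f_1,\f_2}_{1\otimes m}f,g\rangle$ and $\langle T_{m_2}f,g\rangle$ against an auxiliary $g\in\cS(\rd)$ (or $g\in M^1(\rd)$), so that every exchange of integrals reduces to a pairing of $m$ with a Schwartz (resp.\ $M^1$) function, where Fubini is at our disposal.
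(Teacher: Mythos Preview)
Your approach is correct and essentially coincides with the paper's: both compute the integral kernel of $A^{\f_1,\f_2}_{1\otimes m}$, recognize it as a function of $y-t$ alone (namely $\hat m(y-t)\,\cC_{\f_1,\f_2}(y-t)$), and thereby identify the operator as the Fourier multiplier with symbol $m\ast\cF^{-1}\cC_{\f_1,\f_2}$. The only methodological difference is in the distributional cases: the paper invokes the Schwartz kernel theorem (and its $M^1$--$M^\infty$ analogue due to Feichtinger) to pass directly from equality of operators to equality of kernels, whereas you propose to redo this step by hand via weak pairings --- which works, but is effectively a reproof of the kernel theorem in this special convolution setting.
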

\begin{proof}
	Assume  $m,m_2\in \cS'(\rd)$ and $(\f_1,\f_2)\in\cS(\rd)\times \cS(\rd)$. First, we show that the operators $A^{\f_1,\f_2}_{1\otimes m}$ and $T_{m_2}$ are well defined and continuous from $\cS(\rd)$ to $\cS'(\rd)$. 
	For every $f,g\in\cS(\rd)$,  the weak definition of STFT multiplier 
	\eqref{weak def} and the standard properties of the STFT
	give the result, since  $V_{\f_1}f\in \cS(\rdd)$ and $V_{\f_2}g\in\cS(\rdd)$ and the mappings $V_{\f_1}$, $V_{\f_2}$ are continuous on $\cS(\rd)$, see for example \cite[Chapter 1]{Elena-book}.
	For the Fourier multiplier we use the continuity of $\cF$ (resp.  $\cF^{-1}$) on $\cS(\rd)$ (resp. $\cS'(\rd)$) and of the product $\cS(\rd)\cdot \cS'(\rd)\hookrightarrow \cS'(\rd)$.\par 
	Writing them  as integral operators we obtain
	$$A^{\f_1,\f_2}_{1\otimes m} f (t) =\intrd K_A(t,y)f(y)dy,$$
	with kernel
	\begin{align}
	K_A(t,y)&= \intrd\intrd e^{2\pi i (t-y)\o} m(\o) \f_2(t-x)\overline{\f_1(y-x)}\, dxd\o\notag\\
	&=\hat{m}(y-t)\mathbb{G}_{\f_1,\f_2}(t,y)=T_t(\hat{m}{\cC}_{\f_1,\f_2})(y),\label{kernelKa}
	\end{align}
	and  
	\begin{equation}\label{kernelB}
	T_{m_2}f(t)=\intrd K_B(t,y)f(y)dy,
	\end{equation}
	with kernel 
	\begin{equation}\label{kernelmB} 
		K_B(t,y)=\intrd e^{2\pi i (t-y)\o} m_2(\o) d\o=\hat{m}_2(y-t)=T_t\hat{m}_2(y).
	\end{equation}
	By the Schwartz' kernel theorem the operators $A^{\f_1,\f_2}_{1\otimes m}$ and $T_{m_2}$ coincide if and only if their kernels $K_A$ and $K_B$ coincide in $\cS'(\rdd)$. Equating  the kernels we obtain \eqref{link}.  \par
	Consider now  case $(i)$:  $m,m_2\in\cS(\rd)$ and $(\f_1,\f_2)\in\cS'(\rd)\times \cS(\rd)$. We use similar  arguments as above, observing  that
	the STFT $V_{\f_1}f\in \cS'(\rdd)$ for every $f\in\cS(\rd)$ (cf.  \cite[Chapter 1]{Elena-book}). The case $(ii)$ is analogous and left to the reader.\par
	Second, assume $m,m_2\in M^\infty(\rd)$, $\f_1,\f_2\in M^1(\rd)$. 	We  use the same arguments as in the first step, simply replacing  $\cS$ with $M^1$ and its dual $\cS'$ with $(M^1)'= M^\infty$. Hence, we obtain that  $T_{m_2}$ and the STFT multiplier  $A^{\f_1,\f_2}_{1\otimes m}$ are well-defined linear and bounded operators from  $M^1(\rd)$ into $M^\infty(\rd)$. 
	Rewriting them as integral operators and using the kernel theorem in the framework of modulation spaces \cite{fei0,Feichtingerkerneltheorem1997}
	we come up to the result. The cases: (i)  $m,m_2\in M^1(\rd)$, $\f_1\in M^1(\rd)$ $\f_2\in M^\infty(\rd)$, (ii)  $m,m_2\in M^1(\rd)$, $\f_1\in M^\infty(\rd)$ $\f_2\in M^1(\rd)$ are similar.  
\end{proof}

In this case the symbol $m$ of the STFT multiplier is \emph{smoothed} by the convolution with the \ft\, of  the window correlation function ${\cC}_{\f_1,\f_2}$ and the result is a multiplier symbol $m_2$ of $T_{m_2}$  smoother than $m$. For example, if you consider $m\in M^\infty(\rd)$, $\f_1,\f_2  \in M^{1}(\rd)$, as explained in  Proposition \ref{propwindow} $(iv)$, then we have
$$m_2=m\ast   \cF ^{-1}({\cC}_{\f_1,\f_2})\in M^\infty(\rd)\ast \cF^{-1} M^{1}(\rd).$$
Using the convolution property in Proposition \ref{propconv}
\begin{equation}\label{e1}
m_2\in M^\infty(\rd)\ast \cF^{-1} M^{1}(\rd)=M^\infty(\rd)\ast  M^{1}(\rd)\subset  M^{\infty,1}(\rd)\subset \cC_b(\rd)
\end{equation}
and  we infer that the multiplier  symbol $m_2$ belongs to $\cC_b(\rd)$. Then one can play with the convolution properties for modulation  (and other function) spaces to obtain a Fourier multipliers' symbol $m_2$ in different function spaces.

For applications it is often useful to consider windows $\f_1,\f_2\in L^2(\rd)$ and multiplier $m\in L^\infty(\rd)$. In this case the multiplier $m_2$ enjoys the smoothing below.
\begin{lemma}\label{finestre l2}
	Assume $\f_1,\f_2\in L^2(\rd)$, $m\in L^\infty(\rd)$. Then $m_2$ as in \eqref{link-tilde} belongs to $ \cC_b(\rd)$.
\end{lemma}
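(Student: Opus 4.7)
The plan is to show that $\cF^{-1}(\cC_{\f_1,\f_2})$ lies in $L^1(\rd)$; once this is in hand, the lemma follows immediately from a classical fact about convolution of $L^\infty$ against $L^1$.

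First I would compute $\cF^{-1}(\cC_{\f_1,\f_2})$ explicitly using the definition $\cC_{\f_1,\f_2} = \cI\f_2 \ast \bar{\f_1}$. Since convolution is turned into pointwise product by $\cF$, and using $\cF(\cI f) = \cI\hat{f}$ together with $\cF(\bar{f})(\o) = \overline{\hat{f}(-\o)}$, one finds
\begin{equation*}
\cF^{-1}(\cC_{\f_1,\f_2})(\o) = \hat{\f_2}(\o)\,\overline{\hat{\f_1}(\o)}.
\end{equation*}
Because $\f_1,\f_2 \in L^2(\rd)$, Plancherel's theorem yields $\hat{\f_1},\hat{\f_2} \in L^2(\rd)$, and hence by the Cauchy--Schwarz inequality the product $\hat{\f_2}\,\overline{\hat{\f_1}}$ belongs to $L^1(\rd)$, with
\begin{equation*}
\|\cF^{-1}(\cC_{\f_1,\f_2})\|_{L^1} \leq \|\f_1\|_{L^2}\|\f_2\|_{L^2}.
\end{equation*}

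Next, since $m \in L^\infty(\rd)$ and $\cF^{-1}(\cC_{\f_1,\f_2}) \in L^1(\rd)$, Young's inequality gives the pointwise-defined convolution $m_2 = m \ast \cF^{-1}(\cC_{\f_1,\f_2}) \in L^\infty(\rd)$ with norm bound $\|m_2\|_\infty \leq \|m\|_\infty \|\f_1\|_2 \|\f_2\|_2$. Continuity then follows from the standard fact that $L^\infty \ast L^1 \subset \cC_b$: writing
\begin{equation*}
m_2(t+h) - m_2(t) = \int_{\rd} m(t-y)\bigl[\cF^{-1}(\cC_{\f_1,\f_2})(y+h) - \cF^{-1}(\cC_{\f_1,\f_2})(y)\bigr]dy,
\end{equation*}
and using the continuity of translation in $L^1$ (i.e. $\|T_{-h}g - g\|_{L^1}\to 0$ as $h\to 0$ for $g \in L^1$), one obtains $|m_2(t+h)-m_2(t)| \leq \|m\|_\infty \|T_{-h}(\cF^{-1}\cC_{\f_1,\f_2}) - \cF^{-1}\cC_{\f_1,\f_2}\|_{L^1}\to 0$ uniformly in $t$.

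There is no real obstacle here; the only mildly delicate step is verifying the Fourier formula for $\cC_{\f_1,\f_2}$ with the correct reflection/conjugation signs, after which Cauchy--Schwarz and a textbook fact about $L^\infty \ast L^1$ close the argument. One could alternatively quote Proposition \ref{propwindow}(iii) to see that $\cC_{\f_1,\f_2} \in \cC_0(\rd)$, but for the stronger conclusion $m_2 \in \cC_b$ the $L^1$ control of $\cF^{-1}(\cC_{\f_1,\f_2})$ is what is truly needed.
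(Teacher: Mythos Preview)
Your proof is correct and follows essentially the same route as the paper: both arguments establish $\cF^{-1}(\cC_{\f_1,\f_2})\in L^1(\rd)$ via $\cF^{-1}(L^2\ast L^2)=L^2\cdot L^2\subset L^1$ and then invoke $L^\infty\ast L^1\subset \cC_b$. Your version is slightly more explicit in computing $\cF^{-1}(\cC_{\f_1,\f_2})=\hat{\f_2}\,\overline{\hat{\f_1}}$ and in spelling out the continuity argument, but the underlying idea is identical.
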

\begin{proof}
	For $\f_1,\f_2\in L^2(\rd)$, the window correlation function satisfies  $\cF ^{-1}{\cC}_{\f_1,\f_2} \in L^1(\rd)$, since $\cI \f_2,\bar{\f_1}\in L^2(\rd)$ and 
	$$\cF ^{-1}({\cC}_{\f_1,\f_2})\in \cF^{-1}(L^2(\rd)\ast L^2(\rd))=\cF ^{-1}L^2(\rd)\cdot \cF ^{-1}L^2(\rd)= L^2(\rd)\cdot L^2(\rd)\subset L^1(\rd).$$
	Hence, by Proposition \ref{propwindow} $(iii)$ we obtain
	$$m_2\in L^\infty(\rd)\ast L^1(\rd)\subset \cC_b(\rd),$$
as desired.
\end{proof}

\section{Study the equality $A^{\f_1,\f_2}_{1\otimes m}=T_m$}\label{Sec-4}
We first prove by rescaling arguments the  necessary  condition in Proposition \ref{hormandernec}, i.e. $1/q\leq 1/r+1/p$.
\begin{proof}[Proof of Proposition \ref{hormandernec}]
Let us choose the multiplier $m(t)=m_\lambda(t):=\f_\lambda(t)=e^{-\pi \lambda t^2}$ and the function $f(t)=\f_\lambda(t)$ as well. Observe that $\widehat{\f_\lambda}(\xi)=\lambda^{-d/2} e^{-\pi \lambda^{-1}\xi^2}$, so that we compute 
\begin{align*}
T_{m_\lambda}\f_\lambda(t)&=\lambda^{-d/2}\cF^{-1}(e^{-\pi\frac{\lambda^2+1}{\lambda}\xi^2})(t)\\
&=(\lambda^2+1)^{-d/2}e^{-\frac{\pi \lambda}{\lambda^2+1}t^2}. 
\end{align*}
The $L^q$ norm of the function above is given by
$$\|T_{m_\lambda}\f_\lambda\|_q\asymp\lambda^{-\frac{d}{2q}}(\lambda^2+1)^{-\frac{d}{q'}},$$
with $q'$ being the conjugate exponent of $q$. We have $\|\f_\lambda\|_{p}\asymp \lambda^{-d/(2p)}$. Assuming now \eqref{hormest} in our context
$$\|T_{m_\lambda}\f_\lambda\|_q\leq C\leq \|m_\lambda\|_{L^{r,\infty}} \|\f_\lambda\|_p$$
we get 
$$\lambda^{-\frac{d}{2q}}(\lambda^2+1)^{-\frac{d}{q'}}\leq C \lambda^{-\frac{d}{2r}}\lambda^{-\frac{d}{2p}}.$$
Letting $\lambda\to 0^+$ we obtain the desired estimate \eqref{suffcondanti-Wick}. 
\end{proof}
In this section we shall use the weak definition of a STFT multiplier. Namely, for $a\in\cS'(\rdd)$, $\f_1,\f_2\in\cS(\rd)$, the STFT multiplier $\gaw$ can be defined weakly as follows
\begin{equation}\label{weak def}\la \gaw f, g\ra=\la aV_{\f_1}f, V_{\f_2}g\ra=\la a,\overline{ V_{\f_1}f} V_{\f_2}g\ra,\end{equation}
where the brackets $\la\cdot,\cdot\ra$, linear in the first component and conjugate-linear in the second one,  denote the duality between $\cS'$ and $\cS$ (or any other suitable pair of dual spaces).\par
For any symbol $a\phas=(1\otimes m)\phas=m(\o)$, $x,\o\in\rd$, the STFT multiplier $A^{\f_1,\f_2}_{1\otimes m}$ can be formally re-written in terms of the related correlation function. Assume for simplicity that the windows  $\f_1,\f_2$ and multiplier $m=m(\o)$  are in $\cS(\rd)$. We start with $f\in\cS(\rd)$; for every fixed $t\in\rd$, the integrals below are absolutely convergent  and we are allowed to use Fubini's Theorem. Moreover, it is straightforward to see that $A^{\f_1,\f_2}_{1\otimes m} f\in\cS(\rd)$. Simple computations give
\begin{align}
A^{\f_1,\f_2}_{1\otimes m} f(t)&= \intrd e^{2\pi i \o t} m(\o)\intrd f(y) e^{-2\pi i\o y } \mathbb{G}_{\f_1,\f_2}(t,y)dyd\o \notag\\
&=\intrd e^{2\pi i \o t} m(\o)\intrd f(y) e^{-2\pi i\o y } T_t {\cC}_{\f_1,\f_2}(y)dyd\o \label{e10}\\
&= \intrd e^{2\pi i \o t} m(\o)\cF({f} T_t \cC_{\f_1,\f_2})(\o)d\o. \label{e11}
\end{align}

Note that, if we assume condition \eqref{G-uno}, then $ T_t \cC_{\f_1,\f_2}=1$ for every $t\in\rd$ and $A^{\f_1,\f_2}_{1\otimes m} =T_m$, as desired.

The equality \eqref{e11} suggests the introduction of a new time-frequency representation closely related  to the STFT.

\begin{definition}
	For $\f_1\in L^1(\rd),\f_2\in L^2(\rd)$, we define the two-window short-time Fourier transform  of a signal $f\in L^2(\rd)$ by 
	\begin{equation}\label{WSTFT}
\intrd e^{-2\pi i\o y }  f(y) T_t {\cC}_{\f_1,\f_2}(y)dy=\la f,M_\o T_t \overline{\cC}_{\f_1,\f_2}\ra=V_{\overline{\cC}_{\f_1,\f_2}}f(t,\o),\quad (t,\o)\in\rdd.
	\end{equation} 
\end{definition}
For $\f_1\in L^1(\rd),\f_2\in L^2(\rd)$, Young's Inequality gives  $\overline{\cC}_{\f_1,\f_2}\in  L^2(\rd)$. Thus, the integral above is absolutely convergent  for every $f\in L^2(\rd)$. The same argument applies if we replace the condition $\f_1\in L^1(\rd),\f_2\in L^2(\rd)$ with the more general one $\f_1\in L^p(\rd), \f_2\in L^q(\rd)$ such that $1/p+1/q=3/2$.


Using \eqref{e11}, the action of the STFT multiplier $A^{\f_1,\f_2}_{1\otimes m}$  can be rewritten as
\begin{equation}\label{gaw-stft}
A^{\f_1,\f_2}_{1\otimes m} f(t)=\intrd e^{2\pi i \o t} m(\o) 
V_{\overline{\cC}_{\f_1,\f_2}}f(t,\o)d\o=\cF_2^{-1}[mV_{\overline{\cC}_{\f_1,\f_2}}f(t,\cdot)](t),\quad t\in\rd
\end{equation}
where $\cF_2^{-1}$ denotes the partial Fourier transform w.r.t. the second coordinate $\o$. The formal equality above can be made rigorous by studying the properties of the two-window short-time Fourier transform $V_{\overline{\cC}_{\f_1,\f_2}}$ and the multiplier symbol $m(\o)$.\par  
The following issue stems from Theorem \ref{main} with $m=m_2$.
\begin{corollary}\label{maincor}
 Fix a multiplier symbol  $m\in \cS'(\rd)$ (resp. $m\in M^\infty(\rd)$) and  windows $\f_1,\f_2$ in $\cS(\rd)$ (resp. in $M^1(\rd)$). Then    the equality 
	\begin{equation}\label{loc-mult-m}
A^{\f_1,\f_2}_{1\otimes m} =T_m\quad\mbox{on}\quad \cS(\rd)\,\, (\mbox{resp.} \,\, M^1(\rd))
	\end{equation}
holds if and only if 
	\begin{equation}\label{link}
\hat{m} {\cC}_{\f_1,\f_2}=\hat{m}\quad \mbox{in} \quad \cS'(\rd)\,\, (\mbox{resp.} \,\, M^\infty(\rd)).
	\end{equation}
	The same conclusions hold under the following assumptions:\\
	(i) The  symbol $m$ in $\cS(\rd)$  (resp. in $M^1(\rd)$) and the window functions $(\f_1,\f_2)$ in $\cS'(\rd)\times \cS(\rd)$ (resp. $M^\infty(\rd)\times M^1(\rd)$).\\
	(ii) The  symbol $m$ in $\cS(\rd)$ (resp. in $M^1(\rd)$) and the window functions  $(\f_1,\f_2)\in \cS(\rd)\times \cS'(\rd)$  (resp. $M^1(\rd)\times M^\infty(\rd)$).
\end{corollary}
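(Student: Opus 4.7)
The plan is to deduce Corollary \ref{maincor} as an immediate specialization of Theorem \ref{main} to the diagonal case $m_2=m$, followed by a Fourier transform of the resulting convolution identity. Since Theorem \ref{main} already handles all four combinations of symbol/window regularity appearing in the corollary, no genuinely new analytic content is required: the corollary is essentially an algebraic translation of Theorem \ref{main} via the interchange of convolution and pointwise multiplication under $\cF$.

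Concretely, I would first invoke Theorem \ref{main} with $m_2=m$: under each of the four hypotheses on $(m,\f_1,\f_2)$, the theorem states that $A^{\f_1,\f_2}_{1\otimes m}=T_m$ on $\cS(\rd)$ (resp.\ $M^1(\rd)$) if and only if
\[
m \;=\; m\ast \cF^{-1}(\cC_{\f_1,\f_2})
\]
in $\cS'(\rd)$ (resp.\ $M^\infty(\rd)$). Applying the Fourier transform to both sides, which is a topological isomorphism on $\cS'(\rd)$ exchanging convolution with pointwise multiplication, this is equivalent to
\[
\hat m \;=\; \hat m \cdot \cC_{\f_1,\f_2},
\]
which is exactly the condition \eqref{link}. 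The converse direction is obtained by applying $\cF^{-1}$ to \eqref{link} to recover the convolution identity and then quoting Theorem \ref{main} once more.

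The only technical point to verify is that, in each of the four regimes of the corollary, the pointwise product $\hat m \cdot \cC_{\f_1,\f_2}$ and the convolution $m\ast \cF^{-1}(\cC_{\f_1,\f_2})$ are legitimately defined and that $\cF$ really interchanges them in the relevant distribution class. For windows in $\cS(\rd)$, Proposition \ref{propwindow}$(i)$ gives $\cC_{\f_1,\f_2}\in\cS(\rd)$, so multiplication against any tempered distribution is automatic. For windows in $M^1(\rd)$, Proposition \ref{propwindow}$(iv)$ together with the convolution properties in Proposition \ref{propconv} place $\cC_{\f_1,\f_2}$ in a modulation space that pairs correctly with $\hat m \in \cF M^\infty(\rd)$; the mixed cases $(i)$ and $(ii)$ are handled by the same $\cS/\cS'$ (resp.\ $M^1/M^\infty$) duality argument. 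I expect this mapping-properties check to be the only real obstacle — once it is dispatched, the statement follows purely by algebra.
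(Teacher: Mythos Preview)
Your proposal is correct and matches the paper's approach: the paper introduces Corollary \ref{maincor} simply with the sentence ``The following issue stems from Theorem \ref{main} with $m=m_2$'' and gives no separate proof, so specializing Theorem \ref{main} to $m_2=m$ and rewriting the convolution identity $m=m\ast\cF^{-1}(\cC_{\f_1,\f_2})$ on the Fourier side as $\hat m\,\cC_{\f_1,\f_2}=\hat m$ is exactly what is intended. In fact the kernel computation in the proof of Theorem \ref{main} already produces the relation $K_A(t,y)=T_t(\hat m\,\cC_{\f_1,\f_2})(y)$ directly, so the product $\hat m\cdot\cC_{\f_1,\f_2}$ is well-defined in each regime by the very argument used there, and your ``technical point'' is already absorbed in the theorem.
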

Straightforward consequences of the result above are the following.
\begin{corollary} Consider either $(\f_1,\f_2)\in\cS'(\rd)\times \cS(\rd)$ or $(\f_1,\f_2)\in\cS(\rd)\times \cS'(\rd)$. Then the equality \eqref{link} holds for every symbol $m\in\cS(\rd)$ if and only if condition \eqref{G-uno}
is satisfied.	
\end{corollary}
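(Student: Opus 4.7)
The plan is to reduce the statement to a simple density argument via the Fourier transform. The backward implication is immediate: if $\cC_{\f_1,\f_2} = 1$ in $\cS'(\rd)$, then trivially $\hat{m} \cdot 1 = \hat{m}$ for every $m \in \cS(\rd)$, so \eqref{link} holds.

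For the nontrivial direction, I would first rewrite \eqref{link} equivalently as $\hat{m}\,(\cC_{\f_1,\f_2} - 1) = 0$ in $\cS'(\rd)$ for every $m \in \cS(\rd)$. Since $\cF : \cS(\rd) \to \cS(\rd)$ is a bijection, the family $\{\hat{m} : m \in \cS(\rd)\}$ exhausts $\cS(\rd)$, and the condition is equivalent to
\[
g\,(\cC_{\f_1,\f_2} - 1) = 0 \quad \text{in } \cS'(\rd) \qquad \forall\, g \in \cS(\rd).
\]

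Next, I would invoke Proposition \ref{propwindow}$(ii)$: under either of the two hypotheses on $(\f_1,\f_2)$, the window correlation function $\cC_{\f_1,\f_2}$ is continuous with at most polynomial growth, hence defines a tempered distribution via integration, and so does $\cC_{\f_1,\f_2} - 1$. Thus, for every $g,\psi \in \cS(\rd)$, the pairing
\[
\langle g\,(\cC_{\f_1,\f_2} - 1),\, \psi \rangle = \int_{\rd} (\cC_{\f_1,\f_2}(x) - 1)\, g(x)\, \psi(x)\, dx
\]
is an absolutely convergent integral and must vanish.

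To conclude $\cC_{\f_1,\f_2} \equiv 1$, I would specialize as follows: for any $\chi \in C_c^\infty(\rd) \subset \cS(\rd)$, choose $g \in C_c^\infty(\rd)$ equal to $1$ on $\supp \chi$ and set $\psi = \chi$, so that $g\psi = \chi$. The vanishing condition then forces $\int_{\rd}(\cC_{\f_1,\f_2}(x) - 1)\,\chi(x)\, dx = 0$ for every $\chi \in C_c^\infty(\rd)$, and the continuity of $\cC_{\f_1,\f_2} - 1$ (again by Proposition \ref{propwindow}$(ii)$) together with the fundamental lemma of the calculus of variations yields $\cC_{\f_1,\f_2} \equiv 1$, i.e.\ \eqref{G-uno}. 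No serious obstacle is expected; the only point that requires care is ensuring that the products $g \cdot \cC_{\f_1,\f_2}$ are \emph{bona fide} tempered distributions in the asymmetric settings $(\f_1,\f_2) \in \cS'(\rd)\times \cS(\rd)$ and $(\f_1,\f_2) \in \cS(\rd)\times \cS'(\rd)$, which is precisely what Proposition \ref{propwindow}$(ii)$ provides.
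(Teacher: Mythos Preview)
Your proof is correct, but the paper takes a shorter route. Instead of invoking the full family $\{\hat m : m \in \cS(\rd)\}=\cS(\rd)$ and a density argument, the paper simply tests \eqref{link} against the single Gaussian $m(\omega)=e^{-\pi\omega^2}$. Its Fourier transform $\hat m(x)=e^{-\pi x^2}$ is a strictly positive Schwartz function, and since $\cC_{\f_1,\f_2}$ is continuous with at most polynomial growth by Proposition~\ref{propwindow}$(ii)$, the identity $\hat m\,\cC_{\f_1,\f_2}=\hat m$ holds pointwise between continuous functions; dividing by the nowhere-vanishing $\hat m$ gives $\cC_{\f_1,\f_2}\equiv1$ at once. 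Your argument, by contrast, uses all of $\cS(\rd)$ together with the fundamental lemma of the calculus of variations; this is a perfectly valid and robust strategy (it would survive in settings where no single nowhere-vanishing test function is available), but here it is more machinery than needed. Both approaches ultimately rely on Proposition~\ref{propwindow}$(ii)$ to ensure the products make sense as tempered distributions and can be read pointwise.
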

\begin{proof}
	The condition \eqref{G-uno} immediately  follows  if we take $m(\o)=e^{-\pi\o^2}\in\cS(\rd)$ in the equality  \eqref{link}.
\end{proof}
\begin{corollary} It is not possible to find $\f_1,\f_2\in\cS(\rd)$ such that the equality \eqref{loc-mult-m}  holds for every multiplier $m\in\cS'(\rd)$.
	\end{corollary}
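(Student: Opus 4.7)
The plan is to reduce to the previous corollary and then use the Schwartz regularity of the window correlation function to reach a contradiction. Suppose for contradiction that such $\varphi_1,\varphi_2\in\cS(\rd)$ exist. Since $\cS(\rd)\hookrightarrow\cS'(\rd)$, the assumed equality holds in particular for every $m\in\cS(\rd)$, so by the preceding corollary (applied e.g. with $\varphi_1\in\cS(\rd)\subset\cS'(\rd)$, $\varphi_2\in\cS(\rd)$) we must have $\cC_{\f_1,\f_2}=1$ in $\cS'(\rd)$.

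First I would observe that this is incompatible with the regularity statement of Proposition \ref{propwindow} $(i)$: for $\f_1,\f_2\in\cS(\rd)$ the window correlation function $\cC_{\f_1,\f_2}=\cI\f_2*\bar{\f_1}$ belongs to $\cS(\rd)$, hence in particular decays at infinity, while the constant function $1$ does not. Equivalently, taking the Fourier transform of the identity $\cI\f_2*\bar{\f_1}=1$ would give $\cF(\cI\f_2)\cdot\cF(\bar{\f_1})=\delta$ in $\cS'(\rd)$; but the left-hand side is a product of two Schwartz functions and therefore belongs to $\cS(\rd)$, which cannot equal the Dirac distribution. Either route delivers the contradiction.

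No real obstacle is expected here: the statement is essentially an impossibility result, and both ingredients (the reduction to condition \eqref{G-uno} via the previous corollary, and the Schwartz smoothness of $\cC_{\f_1,\f_2}$) are already in place. The only mild point to be careful about is to invoke the preceding corollary in the version that admits one window in $\cS'$ and the other in $\cS$, which is legitimate since $\cS(\rd)\subset\cS'(\rd)$.
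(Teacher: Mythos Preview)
The proposal is correct and follows essentially the same approach as the paper: reduce to condition \eqref{G-uno} (the paper does this by plugging the Gaussian $m(\omega)=e^{-\pi\omega^2}$ directly into \eqref{link}, you do it by citing the preceding corollary, which amounts to the same thing), and then invoke Proposition~\ref{propwindow}\,$(i)$ to see that $\cC_{\f_1,\f_2}\in\cS(\rd)$ cannot equal $1$. Your alternative Fourier-side contradiction is also fine but not needed.
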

\begin{proof}  Taking  $m(\o)=e^{-\pi\o^2}\in\cS(\rd)$ in the equality  \eqref{link} we obtain condition \eqref{G-uno}. 
Since $\f_1,\f_2\in\cS(\rd)$, by Proposition \ref{propwindow} we infer ${\cC}_{\f_1,\f_2}\in\cS(\rd)$, thus condition \eqref{G-uno} is never satisfied.
\end{proof}

Let us try to understand the condition \eqref{G-uno} better for operators having windows/symbols in modulation  spaces.



Notice that under the assumption $\f_1,\f_2\in M^1(\rd)$ the window correlation function $\cC_{\f_1,\f_2}$ is in $M^1(\rd)$ (use  Proposition \ref{propwindow} $(iv)$ or the well-known fact that $M^1$ is an algebra under convolution). As a consequence of Theorem \ref{main}, if we want condition \eqref{link} to be satisfied for every multiplier $m\in M^{\infty}(\rd)$, the window correlation function ${\cC}_{\f_1,\f_2}$ must satisfy
\begin{equation}\label{G-uno2}
{\cC}_{\f_1,\f_2}(t)=1, \quad \, t\in \rd.
\end{equation}
But this is not possible since $\cC_{\f_1,\f_2} \in M^1(\rd)\subset \cC_0(\rd)$.

To overcome this issue, we look for windows in a bigger class that could guarantee condition \eqref{G-uno2}. This requires smoother symbols.

\begin{theorem}\label{main3}
	Consider $p_1,p_2,q_1,q_2\in [1,\infty]$, with $1/p_1+1/p_2\geq 1$, $1/q_1+1/q_2\geq 1$, $\f_1\in M^{p_1,q_1}(\rd)$, $\f_2\in M^{p_2,q_2}(\rd)$, and  $m\in M^{\infty,1}(\rd)$.  Then both the Fourier multiplier $T_m$ and the STFT multiplier  $A^{\f_1,\f_2}_{1\otimes m}$  are well-defined linear and bounded operators on $\lrd$ and the equality \eqref{loc-mult-m} 
	holds on  $M^\infty(\rd)$ if and only if condition \eqref{link}
 is satisfied on $M^\infty(\rd)$.  As a consequence, if we want  \eqref{link} to be fulfilled for every symbol $m\in M^{\infty,1}(\rd)$, the window correlation function ${\cC}_{\f_1,\f_2}$ must satisfy
 \eqref{G-uno2}.
\end{theorem}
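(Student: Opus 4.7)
The proof splits into three parts: (a) boundedness on $L^2(\rd)$ of both operators, (b) the equivalence of \eqref{loc-mult-m} with \eqref{link}, and (c) the consequence on $\cC_{\f_1,\f_2}$. Each piece relies on machinery already established in the paper.

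For (a), the Fourier multiplier case is immediate: the embedding $M^{\infty,1}(\rd)\hookrightarrow L^\infty(\rd)$ from \eqref{minfty1cont} gives $m\in L^\infty(\rd)$, so $T_m$ is bounded on $L^2(\rd)$ by Plancherel's theorem. For the STFT multiplier, I would invoke Theorem \ref{class} with $s=0$, the only nontrivial point being to verify that the tensorised symbol $1\otimes m$ lies in $M^{\infty,1}(\rdd)$. Choosing a Gaussian product window $g\otimes g\in\cS(\rdd)$, the STFT factorises as $V_{g\otimes g}(1\otimes m)(x_1,x_2,\o_1,\o_2)=V_g 1(x_1,\o_1)\,V_g m(x_2,\o_2)$; a direct computation gives $|V_g 1(x_1,\o_1)|=|\hat g(-\o_1)|$, independent of $x_1$ and integrable in $\o_1$, so the $M^{\infty,1}(\rdd)$-norm of $1\otimes m$ factors as a finite product of $\|\hat g\|_{L^1(\rd)}$ and $\|m\|_{M^{\infty,1}(\rd)}$.

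For (b), I would mirror the kernel-based strategy used in the proof of Theorem \ref{main}. The integral kernels of the two operators are given by \eqref{kernelKa} and \eqref{kernelmB}, namely $K_A(t,y)=T_t(\hat m\,\cC_{\f_1,\f_2})(y)$ and $K_B(t,y)=T_t\hat m(y)$. Both operators act as bounded linear maps $M^1(\rd)\to M^\infty(\rd)$, so Feichtinger's kernel theorem represents each by a distributional kernel in $M^\infty(\rdd)$, and the two operators coincide precisely when their kernels agree in $M^\infty(\rdd)$. Equating $K_A$ and $K_B$ and using translation invariance yields $\hat m\,\cC_{\f_1,\f_2}=\hat m$ in $M^\infty(\rd)$, which is exactly \eqref{link}. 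The key technical point is to verify that $K_A$ and $K_B$ are genuine elements of $M^\infty(\rdd)$: for $K_A$ this uses that $\cC_{\f_1,\f_2}$ sits in $M^{\infty,1}(\rd)\subset \cC_b(\rd)$ by Proposition \ref{propwindow}(iv), whose hypotheses are met thanks to the index constraints $1/p_1+1/p_2\geq 1$ and $1/q_1+1/q_2\geq 1$.

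For (c), I would test \eqref{link} with the Gaussian multiplier $m(\o)=e^{-\pi\o^2}\in\cS(\rd)\subset M^{\infty,1}(\rd)$, whose Fourier transform $\hat m(u)=e^{-\pi u^2}$ is strictly positive everywhere. The equality $\hat m\,\cC_{\f_1,\f_2}=\hat m$ then forces $\cC_{\f_1,\f_2}(u)=1$ pointwise, and continuity of $\cC_{\f_1,\f_2}$ (Proposition \ref{propwindow}) extends this to every $u\in\rd$, yielding \eqref{G-uno2}. I expect the main obstacle throughout to be the careful handling of the kernel theorem in the $M^\infty$ setting, in particular confirming that the product $\hat m\,\cC_{\f_1,\f_2}$ is a well-defined element of $M^\infty(\rd)$; this reduces to showing that $\cC_{\f_1,\f_2}\in M^{\infty,1}(\rd)$ acts as a pointwise multiplier on $M^{1,\infty}(\rd)=\cF M^{\infty,1}(\rd)\ni \hat m$.
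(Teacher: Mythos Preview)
Your proposal is correct and follows essentially the same route as the paper: verify $1\otimes m\in M^{\infty,1}(\rdd)$ to invoke Theorem~\ref{class}, use the $M^{\infty,1}\hookrightarrow L^\infty$ embedding for $T_m$, appeal to the Feichtinger kernel theorem with the kernels \eqref{kernelKa}--\eqref{kernelmB}, and check via Proposition~\ref{propwindow}(iv) that $\cC_{\f_1,\f_2}\in M^{\infty,1}(\rd)$ so that $\hat m\,\cC_{\f_1,\f_2}\in M^\infty(\rd)$. One small correction: the identification $\cF M^{\infty,1}(\rd)=M^{1,\infty}(\rd)$ at the end of your part~(b) is false in general---the Fourier image of $M^{\infty,1}$ is the Wiener amalgam space $W(\cF L^\infty,L^1)$, and the paper uses the embedding $W(\cF L^\infty,L^1)\hookrightarrow M^\infty$ to place $\hat m$ in $M^\infty$, after which the multiplication relation $M^\infty\cdot M^{\infty,1}\hookrightarrow M^\infty$ (cf.\ \cite[Prop.~2.4.23]{Elena-book}) gives the conclusion.
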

\begin{proof}
	  We start with $\f_1,\f_2\in M^{1}(\rd)\hookrightarrow M^{p,q}(\rd)$, for every $p,q\in [1,\infty]$.  
	  Notice that, if the multiplier $m\in M^{\infty,1}(\rd)$, then the localization symbol  $(1\otimes m)$ is in $M^{\infty,1}(\rdd)$, since $$(1\otimes m) \in M^{\infty,1}(\rd)\otimes M^{\infty,1}(\rd)\subset M^{\infty,1}(\rdd)$$ and we have $1\in M^{\infty,1}(\rd)$. In fact, for any fixed non-zero $g\in\cS(\rd)$, we  work out
	  $$V_g 1\phas=\cF(T_x\bar{g})(\o)=M_{-x}\hat{\bar{g}}(\o),\quad\phas\in\rdd,$$
	  so that $$\|1\|_{M^{\infty,1}(\rd)}\asymp\|V_g 1\|_{L^{\infty,1}(\rdd)}=\|\hat{\bar{g}}\|_{L^1(\rd)}=\|g\|_{\cF L^1(\rd)}<\infty.$$
	  Hence by Theorem \ref{class} the STFT multiplier $A^{\f_1,\f_2}_{1\otimes m}$ is bounded on any $M^{p,q}(\rd)$ and in particular on $\lrd$. 
	  This is also the case for  the Fourier multiplier $T_m$ with $m\in M^{\infty,1}(\rd)$, since the inclusion relation in \eqref{minfty1cont} gives in particular  $m\in L^\infty(\rd)$ and hence $T_m\in B(L^2)$ \cite{Hormander1960}.
	  Using Theorem \ref{main}, such operators coincide whenever condition
	  \eqref{link} is satisfied.
	  
	  Next, consider  $\f_1\in M^{p_1,q_1}(\rd)$, $\f_2\in M^{p_2,q_2}(\rd)$ satisfying the assumptions. We shall show that the related  kernel $K_A$ of $A^{\f_1,\f_2}_{1\otimes m}$ is in $M^\infty(\rdd)$.	  
In fact,  Proposition \ref{propwindow} $(iv)$ gives the window correlation function $\cC_{\f_1,\f_2}\in M^{\infty,1}(\rd)$.  If the multiplier  $m\in M^{\infty,1}(\rd)$, then $\hat{m}\in W(\cF L^\infty, L^1)(\rd)\hookrightarrow  W(\cF L^\infty, L^\infty)(\rd)= M^\infty(\rd)$ (cf., e.g., \cite[Chapter 2]{Elena-book}) and the multiplication relations for modulation spaces \cite[Prop. 2.4.23]{Elena-book}
$$\|\hat{m} \cC_{\f_1,\f_2}\|_{M^\infty}\lesssim \|\hat{m} \|_{M^\infty}\| \cC_{\f_1,\f_2}\|_{M^{\infty,1}}\lesssim \|m\|_{M^{\infty,1}}\|\f_1\|_{M^{p_1,q_1}}\|\f_2\|_{M^{p_2,q_2}}<\infty.$$
Hence we obtain condition \eqref{link}.
\end{proof}
Thanks to the results above, if the window functions $\f_1$ and $\f_2$ are non-smooth, they can satisfy condition \eqref{G-uno}, as in the following issue.
\begin{example} An example of window correlation functions ${\cC}_{\f_1,\f_2}$ satisfying  \eqref{G-uno}.
Consider  $\f_2=1\in M^{\infty,1}(\rd)$ and   any $\f_1\in M^{1,\infty}(\rd)$ satisfying 
\begin{equation}\label{intf2}
\intrd \f_1 (y)\, dy=1.
\end{equation}
This gives  \eqref{G-uno2}. 
In particular, observe that \eqref{intf2} is fulfilled if we consider $\f_1(t)=e^{-\pi t^2}\in\cS(\rd)\subset M^{1,\infty}(\rd)$.
Hence,  the operators $A^{\f_1,\f_2}_{1\otimes m}$ and $T_m$ coincide for every multiplier 
  $m\in M^{\infty,1}(\rd)$.
\end{example}
The realm of modulation spaces seems the only possible environment to get the equality $A^{\f_1,\f_2}_{1\otimes m}=T_m$. Also for the standard case of $L^2$-window functions the equality fails, as shown below.
\begin{theorem}\label{mainlp}
	 Consider $\f_1,\f_2\in L^2(\rd)$,  and the multiplier $m \in L^\infty(\rd)$.     Then both the Fourier multiplier $T_m$ and the STFT multiplier  $A^{\f_1,\f_2}_{1\otimes m}$ are well-defined linear and bounded operators on $\lrd$ and the equality 
	\begin{equation}\label{loc-multlp}
A^{\f_1,\f_2}_{1\otimes m} =T_m\quad\mbox{on}\quad L^2(\rd)
	\end{equation}
holds 	if and only if condition \eqref{link} is satisfied.
 As a consequence, if we want \eqref{link} to be fulfilled for every multiplier $m\in L^\infty(\rd)$, the window correlation function ${\cC}_{\f_1,\f_2}$ must satisfy
	\eqref{G-uno2}, and this is \textbf{never} the case.
\end{theorem}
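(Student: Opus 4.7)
The plan is to treat the three assertions of the theorem separately. For the $L^2$-boundedness of $T_m$ I would invoke Plancherel directly, obtaining $\|T_m\|_{B(\lrd)}=\|m\|_\infty$. For $A^{\f_1,\f_2}_{1\otimes m}$ I would combine the weak definition \eqref{weak def} with Moyal's orthogonality relation $\|V_{\f_i}f\|_{L^2(\rdd)}=\|\f_i\|_2\|f\|_2$ and Cauchy--Schwarz to conclude $\|A^{\f_1,\f_2}_{1\otimes m}\|_{B(\lrd)}\le\|m\|_\infty\|\f_1\|_2\|\f_2\|_2$.

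For the equivalence $A^{\f_1,\f_2}_{1\otimes m}=T_m\Longleftrightarrow$\eqref{link} on $\lrd$, my strategy is to route through the effective symbol
\[
m_2:=m\ast\cF^{-1}(\cC_{\f_1,\f_2}).
\]
Lemma \ref{finestre l2} ensures $\cF^{-1}(\cC_{\f_1,\f_2})\in L^1(\rd)$ and $m_2\in\cC_b(\rd)$, so $T_{m_2}$ is itself a well-defined bounded operator on $\lrd$. I would then establish the key intermediate identity $A^{\f_1,\f_2}_{1\otimes m}=T_{m_2}$ by working out the sesquilinear form on the dense subclass $f,g\in\cS(\rd)$: by Moyal and Cauchy--Schwarz the double integral $\int\!\!\int m(\o)V_{\f_1}f(x,\o)\overline{V_{\f_2}g(x,\o)}\,dxd\o$ is absolutely convergent, and integrating first in $x$ brings in exactly the factor $\cC_{\f_1,\f_2}(y-z)$, after which Fourier duality in $\o$ collapses the resulting pairing to $\langle T_{m_2}f,g\rangle$. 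Density of $\cS(\rd)$ in $\lrd$ then extends the identity to $\lrd$. Consequently $A^{\f_1,\f_2}_{1\otimes m}=T_m$ is equivalent to $T_{m_2}=T_m$, which upon taking Fourier transforms becomes $\hat m\cdot\cC_{\f_1,\f_2}=\hat m$ in $\cS'(\rd)$, i.e.~\eqref{link}.

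For the final ``never'' conclusion, I would assume \eqref{link} holds for every $m\in L^\infty(\rd)$ and test against the family $m=\hat\phi$ with $\phi\in\cS(\rd)$ arbitrary: then $m\in\cS(\rd)\subset L^\infty(\rd)$ and $\hat m=\cI\phi\in\cS(\rd)$, so the identity $\hat m\cdot\cC_{\f_1,\f_2}=\hat m$ reduces to the pointwise equality $(\cI\phi)(t)(\cC_{\f_1,\f_2}(t)-1)=0$ on $\rd$. The arbitrariness of $\phi$ forces $\cC_{\f_1,\f_2}\equiv 1$. But by Proposition \ref{propwindow}$(iii)$ we have $\cC_{\f_1,\f_2}\in\cC_0(\rd)$ whenever $\f_1,\f_2\in L^2(\rd)$, so $\cC_{\f_1,\f_2}$ vanishes at infinity and cannot equal $1$ identically.

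The main obstacle I expect is the rigorous treatment of the sesquilinear identity underlying $A^{\f_1,\f_2}_{1\otimes m}=T_{m_2}$: the product $\hat m\cdot\cC_{\f_1,\f_2}$ is not directly well-defined when $m$ is only in $L^\infty(\rd)$, since $\hat m$ is merely a tempered distribution and $\cC_{\f_1,\f_2}$ is only continuous. The detour through the concrete convolution $m_2=m\ast\cF^{-1}(\cC_{\f_1,\f_2})\in\cC_b(\rd)$, supplied ``for free'' by Lemma \ref{finestre l2}, sidesteps this by reinterpreting $\hat m\cdot\cC_{\f_1,\f_2}$ as $\widehat{m_2}\in\cS'(\rd)$, which is exactly what makes the density argument and the final use of Fourier inversion go through cleanly.
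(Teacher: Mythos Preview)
Your proposal is correct and follows the same overall strategy as the paper: establish $L^2$-boundedness of both operators, reduce the equality $A^{\f_1,\f_2}_{1\otimes m}=T_m$ to the symbol condition \eqref{link}, and then invoke Proposition~\ref{propwindow}\,(iii) to rule out $\cC_{\f_1,\f_2}\equiv 1$. The paper does this in one line by citing \cite{Wong1999} for boundedness and Theorem~\ref{main} for the equivalence; your version is more self-contained. In particular, your direct sesquilinear computation of $\langle A^{\f_1,\f_2}_{1\otimes m}f,g\rangle=\langle T_{m_2}f,g\rangle$ on $\cS(\rd)\times\cS(\rd)$, followed by density, is a genuine improvement in rigor: Theorem~\ref{main} as stated covers $(\f_1,\f_2)\in\cS\times\cS$ or $M^1\times M^1$, not bare $L^2$ windows, so the paper's appeal to it is a little loose. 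Your detour through the concrete $m_2=m\ast\cF^{-1}(\cC_{\f_1,\f_2})\in\cC_b(\rd)$ supplied by Lemma~\ref{finestre l2}, and your observation that this is what gives meaning to the formal product $\hat m\cdot\cC_{\f_1,\f_2}$ when $m$ is merely $L^\infty$, is exactly the right way to make the argument airtight in this setting.
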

\begin{proof}
The boundedness of $A^{\f_1,\f_2}_{1\otimes m}$ on $\lrd$ is  shown in \cite{Wong1999}. 
For the Fourier multiplier we recall that $T_m$ is bounded on $\lrd$ since $m$ is in $L^\infty(\rd)$ \cite{Hormander1960}. Condition \eqref{link} then follows by Theorem \ref{main}. The window correlation function $\cC_{\f_1,\f_2}$ never satisfies \eqref{link} because  $\f_1,\f_2\in L^2(\rd)$ implies $\cC_{\f_1,\f_2}\in\cC_0(\rd)$,  by Proposition \ref{propwindow} $(iii)$.
\end{proof}

A natural question is whether we can consider windows $\f_1\in M^p(\rd)$, $\f_2\in M^{p'}(\rd)$, $1\leq p,p'\leq\infty$, $1/p+1/p'=1$, and  the multiplier $m\in L^\infty(\rd)$. This is the case explained below.

\begin{proposition}
If we consider $\f_1\in M^p(\rd)$, $\f_2\in M^{p'}(\rd)$,  $1\leq p,p'\leq\infty$, $1/p+1/p'=1$, and multiplier  $m\in L^{\infty}(\rd)$, then the result in Theorem \ref{main} holds true. In particular, the equality in \eqref{link} is fulfilled if and only if Condition \eqref{G-uno2} is satisfied.
\end{proposition}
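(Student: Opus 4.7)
The plan is to run the proof of Theorem \ref{main} under the weaker hypotheses, checking well-definedness at each step. First I would verify that the window correlation function sits in a tractable modulation space: since $1/p+1/p'=1$, Proposition \ref{propwindow}$(iv)$, applied with $u=p$, $t=p'$, $r=\infty$, $\gamma=1$, yields $\cC_{\f_1,\f_2}\in M^{\infty,1}(\rd)\hookrightarrow \cC_b(\rd)\cap L^\infty(\rd)$. Because $L^\infty(\rd)\hookrightarrow M^\infty(\rd)$ and $\cF$ leaves $M^\infty$ invariant, we also have $\hat m\in M^\infty(\rd)$, so the product $\hat m\cdot\cC_{\f_1,\f_2}$ is well-defined in $M^\infty(\rd)$ via the multiplication relation $M^\infty\cdot M^{\infty,1}\hookrightarrow M^\infty$ (cf.\ \cite[Prop.\ 2.4.23]{Elena-book}).

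Next I would establish that both operators are continuous between a matching pair of dual modulation spaces. The Fourier multiplier $T_m$ with $m\in L^\infty(\rd)$ is bounded on $L^2(\rd)$ and extends continuously from $M^1(\rd)$ into $M^\infty(\rd)$. For the STFT multiplier I would adapt the strategy of Theorem \ref{class}: recast $A^{\f_1,\f_2}_{1\otimes m}$ as a Weyl operator with symbol $\sigma=(1\otimes m)\ast W(\f_2,\f_1)$. The hypothesis $1/p+1/p'=1$ places $W(\f_2,\f_1)\in M^{1,\infty}(\rdd)$ by \cite[Thm.\ 4]{E-Wigner-QBanach}, while $1\otimes m\in M^\infty(\rdd)$ (since both $1$ and $m$ sit in $M^\infty$); Proposition \ref{propconv} then gives $\sigma\in M^\infty(\rdd)$, so the associated Weyl operator is continuous from $M^1(\rd)$ to $M^\infty(\rd)$.

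With both operators continuous on the duality pair $(M^1,M^\infty)$, Feichtinger's kernel theorem \cite{fei0,Feichtingerkerneltheorem1997} realises them as integral operators with kernels in $M^\infty(\rdd)$. The explicit kernel calculation from the proof of Theorem \ref{main} carries over verbatim:
\[
K_A(t,y)=T_t(\hat m\,\cC_{\f_1,\f_2})(y),\qquad K_B(t,y)=T_t\hat m(y).
\]
Equating the kernels in $M^\infty(\rdd)$ yields precisely condition \eqref{link}; quantifying over every $m\in L^\infty(\rd)$ and specialising, say, to a Gaussian whose Fourier transform is nowhere vanishing isolates \eqref{G-uno2}, giving the claimed equivalence.

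The main obstacle is the careful handling of the endpoint cases $p\in\{1,\infty\}$, where $\mathcal{M}^p\neq M^p$ and Schwartz-density fails, together with the need to justify the kernel theorem in a regime where the symbol $m$ only has $L^\infty$-regularity. These difficulties are absorbed by Step~2, which recasts the entire analysis in the $M^1\to M^\infty$ framework where Feichtinger's kernel theorem applies directly and no appeal to Schwartz-density is required.
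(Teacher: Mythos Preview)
Your argument is correct but follows a genuinely different route from the paper. The paper bypasses Weyl calculus entirely: it works directly with the weak definition $\langle A^{\f_1,\f_2}_{1\otimes m}f,g\rangle=\langle 1\otimes m,\overline{V_{\f_1}f}\,V_{\f_2}g\rangle$ and obtains the estimate
\[
|\langle A^{\f_1,\f_2}_{1\otimes m}f,g\rangle|\leq \|m\|_{L^\infty}\|\f_1\|_{M^p}\|\f_2\|_{M^{p'}}\|f\|_{M^1}\|g\|_{M^1}
\]
via H\"older's inequality on $\overline{V_{\f_1}f}\,V_{\f_2}g$, the switching identity $|V_{\f_1}f|=|V_f\f_1|$, and the change-of-window inequality $|V_f\f_1|\leq |V_\gamma\f_1|\ast|V_f\gamma|$ for a fixed $\gamma\in\cS$. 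This places the operator in $B(\cS,\cS')$ (or equivalently $B(M^1,M^\infty)$) by a short and completely elementary computation, after which the Schwartz kernel theorem and the kernel formulas from Theorem~\ref{main} finish the job.

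Your route---rewriting the STFT multiplier as a Weyl operator with symbol $(1\otimes m)\ast W(\f_2,\f_1)$, invoking \cite{E-Wigner-QBanach} for $W(\f_2,\f_1)\in M^{1,\infty}$, and then using the modulation-space convolution relations---is heavier machinery but equally valid, and has the virtue of making explicit why the combination $M^p\times M^{p'}$ suffices (it is exactly the threshold for $W(\f_2,\f_1)\in M^{1,\infty}$). The paper's approach, by contrast, keeps everything at the level of STFT estimates and never needs to name the Weyl symbol, which makes it more self-contained.
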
 
\begin{proof} 
 The Fourier multiplier $T_m$ is obviously well-defined, linear and bounded from $\cS(\rd)$ to $\cS'(\rd)$, since $T_m$ is bounded on $\lrd$.\\
  We recall, for $f,g,\gamma\in\cS(\rd)$ with $\norm{\gamma}_{L^2}=1$, the switching property of the STFT \cite[Lemma 1.2.3]{Elena-book} and the change of window in \cite[Lemma 1.2.29]{Elena-book}. Indeed, for $(x,\o)\in\rdd$:
 \begin{equation*}
	V_fg(x,\o)=e^{-2\pi i x\cdot\o}\overline{V_gf(-x,-\o)},\qquad
	\abs{V_gf(x,\o)}\leq\left(\abs{V_\gamma f}\ast V_g\gamma\right)(x,\o).
 \end{equation*}
 For the STFT multiplier $A^{\f_1,\f_2}_{1\otimes m}$, we use its weak definition in \eqref{weak def},  H\"{o}lder's inequality, the mentioned switching property and change of window;  for every $f,g\in\cS(\rd)$,
\begin{align*}
|\la \gaw f, g\ra|&=|\la a, \overline{V_{\f_1}f}V_{\f_2}g\ra|\\
&\leq\|a\|_{L^\infty(\rdd)}\|\overline{V_{\f_1}f}V_{\f_2}g\|_{L^1(\rdd)}\\
&\leq\|m\|_{L^\infty(\rd)}\|V_{\f_1}f\|_{L^p(\rdd)}\|V_{\f_2}g\|_{L^{p'}(\rdd)}\\
&=\|m\|_{L^\infty(\rd)}\|V_{\gamma}f\ast V_{\f_1} \gamma\|_{L^p(\rdd)}\|V_{\gamma}g\ast V_{\f_2}\gamma\|_{L^{p'}(\rdd)}\\
&\leq\|m\|_{L^\infty(\rd)}\|V_{\gamma}f\|_{L^1(\rdd)}\|V_{\f_1} \gamma\|_{L^p(\rdd)}\|V_{\gamma}g\|_{L^{1}(\rdd)}\|V_{\f_2}\gamma\|_{L^{p'}(\rdd)}\\
&=\|m\|_{L^\infty(\rd)}\|f\|_{M^1(\rd)}\|V_\gamma {\f_1} \|_{L^p(\rdd)}\|g\|_{M^1(\rd)} \|V_\gamma {\f_2}\|_{L^{p'}(\rdd)}\\
&=\|m\|_{L^\infty(\rd)}\|f\|_{M^1(\rd)}\|{\f_1}\|_{M^p(\rd)}\|{\f_2}\|_{M^{p'}(\rd)}\|g\|_{M^1(\rd)}.
\end{align*}
Since $\cS(\rd)\hookrightarrow M^1(\rd)$, the estimate above gives the continuity of $A^{\f_1,\f_2}_{1\otimes m}$ from $\cS(\rd)$ into $\cS'(\rd)$. Then, arguing as in the proof of Theorem \ref{main} we obtain the claim.
\end{proof}
Considering $\f_2(t)=1$ for every $t\in\rd$, hence $\f_2\in L^\infty(\rd)\subset M^\infty(\rd)$, and any $\f_1\in M^1(\rd)$ satisfying \eqref{intf2}, we provide examples for  Condition \eqref{G-uno2} being satisfied.\par 
\section{Smoothing effects of STFT multipliers}\label{4}

Thanks to the smoothing effect of the two-window STFT we obtain boundedness results for STFT multipliers which extend the case of Fourier multipliers.
The main tool is to use the representation of $\gaw$ in \eqref{e11}, that is
$$A^{\f_1,\f_2}_{1\otimes m} f(t) = \intrd e^{2\pi i \o t} m(\o)\cF({f} T_t \cC_{\f_1,\f_2})(\o)d\o =\cF_2^{-1}[mV_{\overline{\cC}_{\f_1,\f_2}}f(t,\cdot)].$$

\begin{theorem}\label{contpq}
Assume  $1<p\leq 2\leq q<\infty$, $m\in L^{r,\infty}(\rd)$ such that condition \eqref{suffcondanti-Wick} is satisfied. Consider windows  $\f_1,\f_2\in \cS'(\rd)$ such that the correlation function satisfies
\begin{equation}\label{WCFlp}
\cC_{\f_1,\f_2}\in L^{p'}(\rd)\cap L^\infty(\rd).
\end{equation} 
Then the STFT  operator $A^{\f_1,\f_2}_{1\otimes m}$ is bounded from $L^p(\rd)$ into $L^q(\rd)$.
\end{theorem}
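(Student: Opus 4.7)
The plan is to convert $A^{\f_1,\f_2}_{1\otimes m}$ into a Fourier multiplier via the representation \eqref{e11}, and then invoke H\"{o}rmander's theorem (Theorem \ref{hormander}) with the extra smoothing provided by the window correlation function. Concretely, starting from \eqref{e11} and interchanging the order of integration -- which is legitimate because $f\in\cS(\rd)$ decays rapidly and $\cC_{\f_1,\f_2}\in L^\infty$ -- one writes
\begin{equation*}
A^{\f_1,\f_2}_{1\otimes m} f \;=\; K\ast f, \qquad K(u)=(\cF^{-1}m)(u)\,\cC_{\f_1,\f_2}(-u),
\end{equation*}
equivalently $A^{\f_1,\f_2}_{1\otimes m}=T_{m_2}$ with $m_2=m\ast\cF^{-1}(\cC_{\f_1,\f_2})$, matching the identification of Theorem \ref{main} in a rougher setting.

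Next, to apply Theorem \ref{hormander} to $T_{m_2}$ and conclude $L^p\to L^q$ boundedness, I would estimate $m_2$ in the weak Lebesgue space $L^{\widetilde r,\infty}$ determined by the pair $(p,q)$. This bound would follow from the weak-type Young inequality of Hunt, $L^{r,\infty}\ast L^{a}\hookrightarrow L^{\widetilde r,\infty}$, with $1+1/\widetilde r=1/r+1/a$. The index algebra forces $1/a=1/p'+1/q-1/r$, and the hypothesis \eqref{suffcondanti-Wick} is precisely the requirement $1/a\leq 1$ (i.e.\ $a\geq 1$) that makes the Young inequality applicable, while the output exponent $\widetilde r$ matches the one needed by H\"{o}rmander's theorem for the pair $(p,q)$.

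The main obstacle is placing $\cF^{-1}(\cC_{\f_1,\f_2})$ in an $L^a$ space: since $\cC_{\f_1,\f_2}\in L^{p'}\cap L^\infty$ with $p'\geq 2$ (because $p\leq 2$), Hausdorff--Young does not directly yield an $L^a$-membership for $\cF^{-1}(\cC_{\f_1,\f_2})$. I would bypass this by working entirely on the spatial side with the kernel $K$: use H\"{o}lder's inequality on the factorisation $K=(\cF^{-1}m)\cdot(\cI\cC_{\f_1,\f_2})$ to trade integrability between the two factors, and then apply Young's convolution inequality to control $\|K\ast f\|_{L^q}$. The $L^\infty$-part of $\cC_{\f_1,\f_2}$ handles the diagonal endpoint $p=q=2$ via Plancherel, whereas the $L^{p'}$-part supplies the gain of integrability needed for the off-diagonal direction $p<q$; real interpolation between these two endpoints, with the parameter choice dictated by \eqref{suffcondanti-Wick}, then delivers the full range $1<p\leq 2\leq q<\infty$.
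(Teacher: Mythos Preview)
Your route diverges from the paper's at the crucial point, and that is where the gap opens. You correctly rewrite $A^{\f_1,\f_2}_{1\otimes m}=T_{m_2}$ with $m_2=m\ast\cF^{-1}(\cC_{\f_1,\f_2})$ and then aim to place $m_2$ in a weak Lebesgue space so that H\"ormander's theorem applies. As you yourself recognise, this stalls: $\cC_{\f_1,\f_2}\in L^{p'}\cap L^\infty$ with $p'\geq 2$ gives no Lebesgue information on $\cF^{-1}(\cC_{\f_1,\f_2})$ via Hausdorff--Young. Your proposed bypass does not close the gap. First, the factorisation $K=(\cF^{-1}m)\cdot(\cI\cC_{\f_1,\f_2})$ is only formal: for a generic $m\in L^{r,\infty}$ the inverse transform $\cF^{-1}m$ is merely a tempered distribution, not a function, so there is no H\"older inequality to invoke. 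Second, even granting that $K$ were an honest function, Young's convolution inequality $\|K\ast f\|_q\le\|K\|_a\|f\|_p$ would force $1/a=1+1/q-1/p\geq 1$ when $p\le q$, i.e.\ $a\leq 1$; the off-diagonal gain $L^p\to L^q$ with $p<q$ cannot come from Young alone. The interpolation you sketch afterwards is not specified precisely enough to rescue the argument.

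The paper's proof avoids the Fourier side of $\cC_{\f_1,\f_2}$ altogether. From \eqref{e11} one has $A^{\f_1,\f_2}_{1\otimes m}f(t)=T_m(f\cdot T_t\cC_{\f_1,\f_2})(t)$; the correlation function is transferred onto $f$ rather than onto $m$. H\"older in physical space---where both factors are genuine functions---together with the hypothesis $\cC_{\f_1,\f_2}\in L^{p'}\cap L^\infty$ yields $f\cdot T_t\cC_{\f_1,\f_2}\in L^s$ for every $s\in[1,p]$, uniformly in $t$, with norm $\lesssim\|f\|_p$. Theorem~\ref{hormander} is then applied to $T_m$ with this smaller input exponent $s$. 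The integrability gain is thus realised on the input function, where H\"older is legitimate, rather than on the multiplier, where it is not.
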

\begin{proof}
	Consider a function $f$ in $L^p(\rd)$, $p\leq 2$, then
	$$\|fT_t \cC_{\f_1,\f_2}\|_1\leq \|f\|_p\|T_t\cC_{\f_1,\f_2}\|_{p'} =\|f\|_p\|\cC_{\f_1,\f_2}\|_{p'},\quad \forall t\in\rd $$
	and  $$\|fT_t \cC_{\f_1,\f_2}\|_p\leq \|f\|_p \|T_t\cC_{\f_1,\f_2}\|_\infty\leq\|f\|_p\|\cC_{\f_1,\f_2}\|_\infty, \quad \forall t\in\rd. $$
	So that by complex interpolation, $fT_t \cC_{\f_1,\f_2}\in  L^{s}(\rd),$ for every  $1\leq s\leq p$ (hence $1/s\geq 1/p$) $\forall t\in\rd,$ 
	with $$\|fT_t \cC_{\f_1,\f_2}\|_{L^s(\rd)}\leq C\|f\|_{L^p(\rd)},$$ for a constant $C>0$ independent of $t$.\par 	
	By Theorem \ref{hormander}, if $m\in L^{r,\infty}(\rd)$, then the Fourier multiplier $$T_mf=\cF_2^{-1}[mV_{\overline{\cC}_{\f_1,\f_2}}f(t,\cdot)]=\cF_2^{-1}[m\cF_2(fT_t \cC_{\f_1,\f_2})]$$
	 acts continuously from $ L^p(\rd)\to L^q(\rd)$, with  $q\geq  2$ satisfying  the index condition in \eqref{suffcondanti-Wick}.
\end{proof}
\begin{remark}
	If $\f_1\in L^1(\rd)\cap L^2(\rd)$ and $\f_2\in L^2(\rd)$ (or vice versa) then the window correlation function satisfies  condition \eqref{WCFlp}. In fact, by Proposition \ref{propwindow} it follows that $\cC_{\f_1,\f_2}\in L^2(\rd)\cap L^\infty(\rd)\subset L^{p'}(\rd)$, for every $2\leq p'\leq \infty$.
\end{remark}
This shows the \emph{smoothing effect}  of the two-window STFT $V_{\overline{\cC}_{\f_1,\f_2}} f$. For simplicity, let us consider $f\in \lrd$. The Fourier multiplier $T_m$  takes the function $f\in\lrd$ and considers its \ft\, $\hf$ that lives in $\lrd$ by Plancherel theorem, but  we cannot infer any other further property for $f$. Instead, in the STFT multiplier $A^{\f_1,\f_2}_{1\otimes m}$   we replace $\hf$ with  the two-window STFT  $V_{\overline{\cC}_{\f_1,\f_2}}f$. Assuming the condition \eqref{WCFlp}, we obtain that $V_{\overline{\cC}_{\f_1,\f_2}}f\in \cC_b(\rdd)\cap L^2(\rdd)$ and uniformly continuous on $\rdd$ (cf. \cite[Proposition 1.2.10, Corollary 1.2.12]{Elena-book}), and this implies $V_{\overline{\cC}_{\f_1,\f_2}}f(t,\cdot)\in \cC_b(\rd)\cap L^2(\rd)$ for every fixed $t\in\rd$, so that the related multiplier $\cF^{-1}_2[mV_{\overline{\cC}_{\f_1,\f_2}}f(t,\cdot)]$ can enjoy the smoothing effect above, uniformly with respect to $t\in\rd$. 

\subsection{The anti-Wick case.} Thanks to the discussions above, we can state that an anti-Wick operator $A_{1\otimes m}^{\f,\f}$, with Gaussian windows  $\f(t)=2^{d/4}e^{-\pi t^2}$ and multiplier symbol $m\in\cS'(\rd)$,  can \emph{never} be written in the Fourier multiplier form. In fact, recalling that the window correlation function in this case  is given by  $\cC_{\f,\f}(t)= e^{-\frac{\pi}{2} t^2}$, cf. formula \eqref{correlGauss2}, we infer that condition \eqref{G-uno} is never satisfied.

Let us better understand the smoothing effects for such operators. Using the expression in \eqref{e11},   we can write
\begin{equation*}
A_{1\otimes m}^{\f,\f} f(t)=\intrd e^{2\pi i \o t} m(\o)\cF(f T_t(e^{-\frac\pi 2 (\cdot)^2}))(\o) d\o.
\end{equation*}
The anti-Wick operator in terms of the two-window STFT defined in \eqref{WSTFT} can be written as
\begin{equation*}
A_{1\otimes m}^{\f,\f} f(t)=\cF_2^{-1}[ m V_{\cC_{\f,\f}} f(t,\cdot)], \quad t\in\rd. 
\end{equation*}
  Roughly speaking, here the signal $f$  is first smoothed by multiplying with the shifted Gaussian $ T_t(e^{-\frac\pi 2 (\cdot)^2})$, that is \begin{equation}\label{gt}
  g_t(y) :=f(y) T_t(e^{-\frac\pi 2 (\cdot)^2})(y).
 \end{equation}  
Then, the multiplier $T_m$ is applied to the modified signal $g_t$. In other words,
\begin{equation}\label{AWM}
 A_{1\otimes m}^{\f,\f} f(t)=T_m(g_t)(t),\quad f\in\lrd.
\end{equation}
From the equality above, it is clear the smoothing effect of the anti-Wick operator $A^{\f,\f}_{1\otimes m}$ with respect to the Fourier multiplier $T_m$, stated in Theorem \ref{contpqwick}, that we are going to prove very easily.
\begin{proof}[Proof of Theorem \ref{contpqwick}]
 Since the window correlation function $\cC_{\f,\f}(t)= e^{-\frac{\pi}{2} t^2}$ is in $\cS(\rd)\hookrightarrow L^{p'}(\rd)\cap L^\infty(\rd)$, for any $2\leq p'<\infty$ condition in \eqref{WCFlp} is satisfied and the thesis follows by Theorem \ref{contpq}.
\end{proof}
We end up this section by showing the necessity of the indices' relation in \eqref{suffcondanti-Wick}.
\begin{theorem}\label{neccWick}
If there exists a  $C>0$ such that the anti-Wick operator satisfies
\begin{equation}\label{necwick}
 \|A^{\f,\f}_{1\otimes m}f\|_q\leq C\|m\|_{L^{r,\infty}}\|f\|_p,\quad \forall f,m\in\cS(\rd),
\end{equation}
then condition \eqref{suffcondanti-Wick} holds true.
\end{theorem}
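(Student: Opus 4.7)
My plan is to recycle the rescaling strategy already used in the proof of Proposition \ref{hormandernec}, feeding the hypothesis \eqref{necwick} with the one-parameter family of Gaussians
\[
m_\lambda(\o)=e^{-\pi\lambda\o^2},\qquad f_\lambda(t)=e^{-\pi\lambda t^2},\qquad \lambda>0.
\]
The right-hand side of \eqref{necwick} has already been analysed in Section 2 and in the proof of Proposition \ref{hormandernec}: one has $\|m_\lambda\|_{L^{r,\infty}}\asymp \lambda^{-d/(2r)}$ and $\|f_\lambda\|_p\asymp \lambda^{-d/(2p)}$ as $\lambda\to 0^+$, so everything boils down to computing the left-hand side and comparing powers of $\lambda$.

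The central step is to evaluate $A^{\f,\f}_{1\otimes m_\lambda}f_\lambda$ in closed form. For this I would use the representation \eqref{AWM} together with the explicit expression $\cC_{\f,\f}(y)=e^{-\pi y^2/2}$ from Example \ref{correlGauss}(i), giving
\[
A^{\f,\f}_{1\otimes m_\lambda} f_\lambda(t)=T_{m_\lambda}(g_{t,\lambda})(t),\qquad g_{t,\lambda}(y)=e^{-\pi\lambda y^2}\,e^{-\pi(y-t)^2/2}.
\]
Since $g_{t,\lambda}$ is a product of two Gaussians, a completion of squares in $y$ yields $g_{t,\lambda}(y)=A(t,\lambda)\,e^{-\pi\beta (y-c(t,\lambda))^2}$ with $\beta=\lambda+1/2$, and since $T_{m_\lambda}$ is just convolution with the Gaussian $\lambda^{-d/2}e^{-\pi z^2/\lambda}$, a second Gaussian computation produces an expression of the form
\[
A^{\f,\f}_{1\otimes m_\lambda}f_\lambda(t)=C(\lambda)\,e^{-\pi\mu(\lambda) t^2},
\]
where $C(\lambda)$ remains bounded and $\mu(\lambda)$ is a rational function of $\lambda$ whose small-$\lambda$ asymptotics one computes to be $\mu(\lambda)\sim \lambda$ as $\lambda\to 0^+$.

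Once this is in hand, the $L^q$-norm behaves like $\|A^{\f,\f}_{1\otimes m_\lambda}f_\lambda\|_q \asymp \mu(\lambda)^{-d/(2q)}\asymp \lambda^{-d/(2q)}$ as $\lambda\to 0^+$, and inserting the three asymptotics into \eqref{necwick} gives
\[
\lambda^{-d/(2q)}\lesssim \lambda^{-d/(2r)-d/(2p)},\qquad \lambda\to 0^+,
\]
which forces $-1/(2q)\geq -1/(2r)-1/(2p)$, i.e.\ precisely condition \eqref{suffcondanti-Wick}.

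The only delicate point is arithmetic: one must carry out the two Gaussian completions (first to put $g_{t,\lambda}$ in standard form, then to compute $T_{m_\lambda}g_{t,\lambda}$ at $z=t$) without losing track of the quadratic coefficient, since the whole argument depends on verifying that the leading asymptotics of $\mu(\lambda)$ is linear in $\lambda$, not of higher order. Every other ingredient is either a scaling identity for Gaussians or already proved in the excerpt, so I expect no conceptual obstacle beyond this bookkeeping.
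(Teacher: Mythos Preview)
Your proposal is correct and follows essentially the same rescaling strategy as the paper: test \eqref{necwick} with $m_\lambda=f_\lambda=e^{-\pi\lambda(\cdot)^2}$, compute $A^{\f,\f}_{1\otimes m_\lambda}f_\lambda$ explicitly as a Gaussian $c_\lambda e^{-\pi b_\lambda t^2}$, and compare powers of $\lambda$ as $\lambda\to0^+$. The only cosmetic difference is that the paper records the outcome of the ``tedious computation'' directly, whereas you organise it through the representation \eqref{AWM}; your key claim $\mu(\lambda)\sim c\lambda$ is exactly what the paper's explicit $b_\lambda$ also gives, so the conclusions coincide.
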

\begin{proof}
We write condition \eqref{necwick}   for the multipliers $m_\lambda(\xi)=\f_\lambda(\xi)=e^{-\pi\lambda \xi^2}$, $\lambda>0$, and functions $f_\lambda(t)=\f_\lambda(t)$ as well. Then we compute the anti-Wick operator $A^{\f,\f}_{1\otimes m_\lambda} f_\lambda$. A tedious computation shows 
$$A^{\f,\f}_{1\otimes m_\lambda} f_\lambda(t)=c_\lambda e^{-\pi b_\lambda t^2},$$
with
$$c_\lambda:= \frac{2^{d/2}}{(6\lambda^2+4\lambda+1)^{d/2}},\quad b_\lambda:=\frac{2\lambda(6\lambda^3+10\lambda^2+9\lambda+1)}{(6\lambda^2+4\lambda+1)(2\lambda+1)^2}.$$
This yields the norm estimate 
$$\|A^{\f,\f}_{1\otimes m_\lambda} f_\lambda\|_q\asymp c_\lambda b_\lambda^{-\frac{d}{2q}}\asymp \frac{(2\lambda +1)^{\frac{d}{q}}}{\lambda^{\frac{d}{2q}}(6\lambda^2+4\lambda+1)^{\frac{d}{2q'}}(6\lambda^3+10\lambda^2+9\lambda+1)^{\frac{d}{2q}}}.
	$$
Letting $\lambda\to 0^+$ we infer the inequality in \eqref{suffcondanti-Wick}.
\end{proof}

\section{Gabor multipliers}\label{6}
The \emph{spreading representation}  of an integral operator $L$ is useful for applications (see, e.g., \cite{DT2010} or \cite[Chapter 14]{grochenig}) and formally given by:
\begin{equation*}
Lf(t)=\intrdd \eta_L\phas(M_\o T_{x}f)(t)dxd\o,\quad f\in\cS(\rd),
\end{equation*}
where $\eta_L$, also denoted by $\eta(L)$, is called the \emph{spreading function} and is related to the kernel $K_L=K(L)$ of the operator $L$ by the following transform
\begin{equation*}
\eta_L\phas=\intrd K_L(y,y-x) e^{-2\pi i \o y}\, dy.
\end{equation*}
There is an exact analogue of these objects in the finite discrete case, see \cite{fekolu09}. In fact, if $L\colon\bC^N\to\bC^N$ is a linear operator then we denote its matrix representation by $K_L=K(L)$ and define its spreading function as 
\begin{equation}
\eta_L(u,v)=\sum^{N-1}_{k=0}K_L(k,k-u)e^{\frac{-2\pi i k v}{N}}.
\end{equation}
So that $L$ can be seen as a finite superposition of TF-shifts, which in the finite dimensional case are an orthonormal basis, so every matrix can be uniquely described by its spreading function:
\begin{equation*}
L=\sum^{N-1}_{k=0}\sum^{N-1}_{l=0}\eta_L(k,l)\pi(k,l).
\end{equation*}

\subsection{Finite discrete setting}
In the finite discrete setting, we will always identify $\bC^N$ with 
$\ell^2(\bZ_N)$. We shall denote by $\mathbf{1} \in\bC^N$ the constant function equal to $1$. From now on, we will always consider a rectangular lattice of the form 
\begin{equation}\label{Eq-rectangular-lattice}
	\sfLa=\a\bZ_N\times\b\bZ_N,\quad \a,\b\in\bN_+,\quad A\coloneqq\frac{N}{\a}\in\bN_+,\quad B\coloneqq\frac{N}{\b}\in\bN_+.
\end{equation}
Since $\a, \b$ are divisors of $N$, $\sfLa$ is a subgroup. Therefore in this case translation and modulation operator take the form:
 \begin{equation*}
 	T_k f(t)=f(t-k),\quad M_l f(t)=e^{\frac{2\pi i l t}{N}}f(t),\qquad f\in\bC^N,\,t=0,\ldots,N-1,\,k,l\in\bZ.
 \end{equation*}
We put again $\pi(k,l)=M_l T_k$ and define the STFT of a signal $f\in\bC^N$ w.r.t. the window $g\in\bC^N$ as  the matrix in $\bC^{N\times N}$
\begin{equation*}
	V_gf(u,v)=\<f,\pi( u,v)g\>=\sum_{k=0}^{N-1}f(k)\overline{g(k-u)}e^{\frac{-2 \pi i k v}{N}}.
\end{equation*}
The Gabor system generated by a window $g\in\bC^N$ and lattice $\sfLa$ as in \eqref{Eq-rectangular-lattice} is defined as
\begin{align*}
	\cG(g,\a,\b)&\coloneqq\{\pi(k,l)g\,,\,(k,l)\in\sfLa\}\\
	&=\{\pi(\a k,\b l)g\, ,\,k=0,\ldots,A-1,\,l=0,\ldots,B-1\}.\notag
\end{align*}
A Gabor system $\cG(g,\a,\b)$ is said to be a Gabor frame for $\bC^N$ if there exist $C_1,C_2>0$ such that
\begin{equation}
	C_1\norm{f}^2_2\leq\sum_{k=0}^{A-1}\sum_{l=0}^{B-1}\abs{\<f,\pi(\a k,\b l)g\>}^2\leq C_2\norm{f}^2_2\qquad\forall f\in\bC^N,
\end{equation}
where $\<f,\pi(\a k,\b l)g\>=\sum_{u=0}^{N-1}f(u)\overline{\pi(\a k,\b l)g(u)}$ and $\norm{\cdot}_2$ is the induced norm. Since we are in finite-dimension, this is equivalent to ask that $\cG(g,\a,\b)$  spans $\bC^N$ \cite{ch16}, where the bounds $C_1,C_2$ describe the numerical properties of the transform and the quantity $\sqrt{C_2/C_1}$ is the condition number of the analysis, see \cite{BalHolNecSto2017}.\\
The discrete Fourier transform (DFT) on $\bC^N$ is the linear operator represented by the following $N\times N$ complex matrix
\begin{equation}
	(\dFou)_{k,l}\coloneqq e^{\frac{-2 \pi i k l}{N}}
\end{equation}
which inverse if given by 
\begin{equation*}
	(\dFou^{-1})_{k,l}=\frac1N e^{\frac{2 \pi i k l}{N}}.
\end{equation*}
We shall denote by $\hf$ the vector $\dFou f$, $f\in\bC^N$. Therefore, the discrete two-dimensional Fourier transform of a matrix $a\in\bC^N\times\bC^N$ and its inverse are defined as
\begin{equation}
	\sfF_2 a(u,v)\coloneqq \sum_{k=0}^{N-1}\sum_{l=0}^{N-1}a(k,l)e^{\frac{-2\pi i u k}{N}}e^{\frac{-2\pi i v l}{N}},\,\sfF^{-1}_2 a(u,v)=\frac{1}{N^2}\sum_{k=0}^{N-1}\sum_{l=0}^{N-1}a(k,l)e^{\frac{2\pi i u k}{N}}e^{\frac{2\pi i v l}{N}}.
\end{equation}
The action of $\sfF_2$ on the (pointwise) product of $a$ and $b$ in $\bC^{N\times N}\cong \bC^N\times\bC^N$ is well-known and we mention it for sake of completeness:
\begin{equation}\label{Eq-Conv-Th-F2}
	\sfF_2(a\cdot b)=\frac{1}{N^2}\left(\sfF_2a\ast\sfF_2b\right),
\end{equation}
where the (two-dimensional discrete) convolution on the right-hand side is defined similarly to \eqref{Eq-Def-H}. 
The Kronecker delta function $\delta\in\bC^N$ is defined as
\begin{equation*}
	\delta(u)=\begin{cases}
		1\qquad&\text{for}\qquad u=0,\\
		0\qquad&\text{for}\qquad u=1,\ldots,N-1.
	\end{cases}
\end{equation*}
We recall also the following identity 
:
\begin{equation*}
	\cF_N\left(\frac1N \mathbf{1}\right)(u)=\delta(u).
\end{equation*}
The so-called impulse train, or Dirac comb, will be useful in some of the subsequent computations:
\begin{align}\label{Eq-Def-impulse-train}
	\Sha_{(\a,\b)}(u,v)&\coloneqq\sum_{p=0}^{A-1}\sum_{q=1}^{B-1}\delta(u-\a p)\delta(v-\b q)\\
	&=\chi_{\a \bZ_N}(u)\cdot\chi_{\b\bZ_N}(v)\notag\\
	&=\frac{1}{\a\b}\sum_{k=0}^{N-1}\sum_{l=0}^{N-1}\delta(u-\a k)\delta(v-\b l),\notag
\end{align}
for $u,v=0,\ldots,N-1$.

For sake of the reader, we recall the \emph{Poisson summation formula} \eqref{Eq-F_N-characteristic} and its two-dimensional analogue in the following lemma, see \cite{go62} and \cite[Theorem 3.2.1]{lu09-2}. 

\begin{lemma} \label{Lem-sym-Fourier-Sha}
	Under the assumptions in \eqref{Eq-rectangular-lattice}:
	\begin{equation}\label{Eq-F_N-characteristic}
		\cF_N \chi_{\a \bZ_N}=A\chi_{A\bZ_N},
	\end{equation}
	\begin{equation}
		\sfF_2 \Sha_{(\a,\b)}=AB\,\Sha_{(A,B)}.
	\end{equation}
\end{lemma}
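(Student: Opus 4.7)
The plan is to reduce (2) to (1) via the obvious tensor factorization, and to prove (1) by a direct geometric sum.

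\medskip

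\textbf{Step 1: Prove (1) by direct computation.} Unwinding the definition of $\cF_N$, I would write
\begin{equation*}
	\cF_N\chi_{\a\bZ_N}(u)=\sum_{k=0}^{N-1}\chi_{\a\bZ_N}(k)e^{-2\pi i ku/N}.
\end{equation*}
Since $\a\mid N$ (assumption \eqref{Eq-rectangular-lattice}), the support of the indicator is exactly $\{0,\a,2\a,\ldots,(A-1)\a\}$, so the sum reduces to
\begin{equation*}
	\sum_{j=0}^{A-1}e^{-2\pi i j u/A},
\end{equation*}
which is a geometric series in $\zeta=e^{-2\pi i u/A}$. Since $\zeta^A=1$, this sum equals $A$ when $\zeta=1$ (i.e.\ $A\mid u$, equivalently $u\in A\bZ_N$) and $0$ otherwise. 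This gives exactly $A\chi_{A\bZ_N}(u)$.

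\medskip

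\textbf{Step 2: Derive (2) from (1) by tensor factorization.} From the second line of the definition \eqref{Eq-Def-impulse-train}, $\Sha_{(\a,\b)}(u,v)=\chi_{\a\bZ_N}(u)\,\chi_{\b\bZ_N}(v)$ is a product of functions of the separated variables $u$ and $v$. The two-dimensional DFT $\sfF_2$ factors accordingly:
\begin{equation*}
	\sfF_2\bigl(\chi_{\a\bZ_N}\otimes\chi_{\b\bZ_N}\bigr)(u,v)=\cF_N\chi_{\a\bZ_N}(u)\cdot\cF_N\chi_{\b\bZ_N}(v).
\end{equation*}
Applying (1) to each factor, this equals $A\chi_{A\bZ_N}(u)\cdot B\chi_{B\bZ_N}(v)=AB\,\Sha_{(A,B)}(u,v)$.

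\medskip

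\textbf{Obstacle.} There is no real obstacle: the result is just the finite discrete Poisson summation formula for the subgroup $\a\bZ_N\leq\bZ_N$, whose annihilator is $A\bZ_N$. The only point requiring care is to use the divisibility hypothesis $\a\mid N$ (so that $A=N/\a\in\bN_+$) when identifying the support of $\chi_{\a\bZ_N}$ inside $\{0,1,\ldots,N-1\}$; without this, the enumeration of the geometric sum would need an extra argument.
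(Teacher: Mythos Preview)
Your proof is correct. The paper does not actually give a proof of this lemma; it merely states the result and cites external references (Good \cite{go62} and Luong \cite[Theorem 3.2.1]{lu09-2}), so there is no in-paper argument to compare against. Your direct geometric-sum computation for \eqref{Eq-F_N-characteristic} followed by the tensor factorization for the two-dimensional statement is exactly the standard elementary proof one would expect, and it is complete as written.
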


The discrete symplectic Fourier transform  of a matrix $a\in\bC^{N\times N}$ is defined as
\begin{equation}
	\sfF_s a(u,v)\coloneqq\frac1N\sum_{k=0}^{N-1}\sum_{l=0}^{N-1}a(k,l)e^{\frac{2\pi i\left(lu-kv\right)}{N}}
\end{equation}
with $u,v=0,\ldots,N-1$. Hence the relation between $\sfF_2$ and $\sfF_s$ is as follows:
\begin{equation}\label{Eq-relation-Fs-F2}
	\sfF_s a(u,v)=\frac1N\sfF_2(a^T)(-u,v)=\frac1N \sfF_2a(v,-u),
\end{equation}
$a^T$ being the transpose of $a$. Recall that given two vectors $f,g\in\bC^N$, the tensor product $f\otimes g\in\bC^{N\times N}$ is the matrix 
\begin{equation*}
	f\otimes g (u,v)=f(u)g(v),\qquad u,v=0,\ldots,N-1.
\end{equation*}
We mention also that
\begin{equation}\label{Eq-Fs-property}
	\sfF_s(f\otimes\hat{g})=g\otimes\hat{f}.
\end{equation}
\begin{definition}
	A Fourier multiplier, or linear time invariant (LTI) filter, or convolution operator $H\colon\bC^N\to\bC^N$ is uniquely determined by the so called impulse response $h\in\bC^N$ via circular convolution
	\begin{equation}\label{Eq-Def-H}
		Hf(u)\coloneqq h\ast f(u)\coloneqq \sum^{N-1}_{k=0}h(u-k)f(k),\quad f\in\bC^N,\, u=0,\ldots,N-1,
	\end{equation}
where $u-k$ is considered modulus $N$.
\end{definition}
Clearly, $h=H\delta$ and 
\begin{equation*}
	Hf(u)=h \ast f (u)=\left(\dFou^{-1}\dFou h\ast \dFou^{-1}\dFou f\right)(u)=\dFou^{-1}\left(\hat{h}\cdot \hf \right)(u),
\end{equation*}
see \eqref{Eq-h}, $\hat{h}$ is also called \textit{frequency response}. It is straight forward to see that a LTI filter $H$ on $\bC^N$ has matrix representation
\begin{equation}\label{Eq-matrix-H}
	K_H(u,v)=h(u-v),\qquad u,v=0,\ldots, N-1.
\end{equation}
We can define the associated discrete spreading function $\eta_H\in\bC^{N\times N}$ as
\begin{equation}\label{Eq-spreading-function-H}
	\eta_H(u,v)=h\otimes\delta(u,v).
\end{equation}
 Given a rectangular lattice $\sfLa$ as in \eqref{Eq-rectangular-lattice}, windows $g_1,g_2\in\bC^N$, mask or lower symbol $a\in\bC^{N\times N}$, we define the (finite) Gabor multiplier applied to $f\in\bC^N$ as follows:
 \begin{equation}
 	\dGabor{g_1}{g_2}{a}{\sfLa}f = \sum_{k=0}^{A-1}\sum_{l=0}^{B-1}a(\a k,\b l)V_{g_1}f(\a k,\b l)\pi(\a k,\b l)g_2.
 \end{equation} 
Whenever clear, we shall write $\dGab$ in place of $\dGabor{g_1}{g_2}{a}{\sfLa}$. We mention that in the finite discrete setting Gabor multipliers coincide with STFT multipliers if we choose $\a=1=\b$. It is straightforward to obtain the matrix representation of $\dGabor{g_1}{g_2}{a}{\sfLa}$:
\begin{equation}\label{Eq-matrix-dGab}
	K(\dGabor{g_1}{g_2}{a}{\sfLa})(u,v)=\sum_{k=0}^{A-1}\sum_{l=0}^{B-1}a(\a k,\b l)\overline{g_1(v-\a k)}g_2(u-\a k)e^{\frac{2 \pi i \b l (u-v)}{N}}.
\end{equation}
Let us introduce the notation
\begin{equation}\label{Eq-sym-Fourier-a}
	\sfA\coloneqq\sfF_s a,
\end{equation}
where $a\in\bC^{N\times N}$ is the symbol of a Gabor multiplier. In \cite{DT2010} many results for the interrelation of spreading function and Gabor multiplier are shown. Here we give the related finite dimensional result, like the following:
\begin{proposition}
	The spreading function of a (finite) Gabor multiplier $\dGabor{g_1}{g_2}{a}{\sfLa}$ is given by
	\begin{equation}\label{Eq-spreading-function-G}
			\eta(\dGabor{g_1}{g_2}{a}{\sfLa})(u,v)=\frac{N}{\a \b}\sum_{l=0}^{\a -1}\sum_{k=0}^{\b-1}\sfA(u+ B k, v- A l) V_{g_1}g_2(u,v).
	\end{equation}
\end{proposition}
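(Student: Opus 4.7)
The plan is to reduce everything to the definition of the discrete spreading function applied to the explicit matrix kernel \eqref{Eq-matrix-dGab}, and then to identify the resulting trigonometric lattice sum as a two-dimensional Fourier transform of the symbol $a$ masked by the impulse train $\Sha_{(\a,\b)}$, at which point the Poisson-type identity in Lemma \ref{Lem-sym-Fourier-Sha} and the convolution theorem \eqref{Eq-Conv-Th-F2} allow to evaluate the result explicitly.

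First, I would substitute $v=t-u$ into \eqref{Eq-matrix-dGab}; the exponential factor $e^{2\pi i \b l(u-v)/N}$ collapses to $e^{2\pi i \b l u/N}$. Plugging the outcome into $\eta_G(u,v)=\sum_{t=0}^{N-1}K_G(t,t-u)\,e^{-2\pi i tv/N}$ with $G=\dGabor{g_1}{g_2}{a}{\sfLa}$ and performing the substitution $s=t-\a k$ (valid on $\bZ_N$), the $t$-sum splits into a phase $e^{-2\pi i \a k v/N}$ and the sum $\sum_{s}g_2(s)\overline{g_1(s-u)}\,e^{-2\pi i sv/N}=V_{g_1}g_2(u,v)$. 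The upshot is the factorization
\begin{equation*}
\eta_G(u,v)=V_{g_1}g_2(u,v)\,\phi(u,v),\qquad \phi(u,v)=\sum_{k=0}^{A-1}\sum_{l=0}^{B-1} a(\a k,\b l)\,e^{2\pi i(\b l u-\a k v)/N}.
\end{equation*}

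Next, I would observe that $\phi$ becomes a full-range sum over $\bZ_N^2$ once the symbol is masked by the impulse train \eqref{Eq-Def-impulse-train}, namely $\phi(u,v)=\sfF_2(a\cdot \Sha_{(\a,\b)})(v,-u)$. Applying the convolution theorem \eqref{Eq-Conv-Th-F2} together with the Poisson-type identity $\sfF_2\Sha_{(\a,\b)}=AB\,\Sha_{(A,B)}$ of Lemma \ref{Lem-sym-Fourier-Sha} yields $\phi(u,v)=\tfrac{AB}{N^2}(\sfF_2 a\ast \Sha_{(A,B)})(v,-u)$; since $\Sha_{(A,B)}$ is supported on $A\bZ_N\times B\bZ_N$, the convolution collapses to $\sum_{l=0}^{\a-1}\sum_{k=0}^{\b-1}\sfF_2 a(v-Al,-u-Bk)$.

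To finish, I would invoke \eqref{Eq-relation-Fs-F2} in the form $\sfF_2 a(v',-u')=N\,\sfA(u',v')$, so that each summand becomes $N\,\sfA(u+Bk,\,v-Al)$. Combining prefactors, $\tfrac{AB}{N^2}\cdot N=\tfrac{N}{\a\b}$, delivers exactly the formula of the proposition. The genuine difficulty is one of bookkeeping: the sign-swap built into the symplectic Fourier transform through \eqref{Eq-relation-Fs-F2}, and the reductions $\bmod\,N$ in the change of variable, must be tracked carefully, but no tool beyond the identities already listed in the excerpt is required.
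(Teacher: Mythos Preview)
Your proposal is correct and follows essentially the same route as the paper: factor out $V_{g_1}g_2(u,v)$ via the substitution $s=t-\a k$, recognize the remaining lattice sum as a masked two-dimensional Fourier transform, then apply the convolution theorem \eqref{Eq-Conv-Th-F2} together with Lemma~\ref{Lem-sym-Fourier-Sha} and the relation \eqref{Eq-relation-Fs-F2}. The only cosmetic difference is that the paper passes through the transpose form $\sfF_s b(u,v)=\tfrac1N\sfF_2(b^T)(-u,v)$, hence works with $\sfF_2 a^T$ and $\Sha_{(\b,\a)}$, whereas you use the equivalent second identity $\sfF_s b(u,v)=\tfrac1N\sfF_2 b(v,-u)$ and stay with $\sfF_2 a$ and $\Sha_{(\a,\b)}$ throughout; both versions are recorded in \eqref{Eq-relation-Fs-F2} and lead to the same prefactor $AB/N=N/(\a\b)$.
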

\begin{proof}
 	A direct computation gives
 	\begin{align}
 		 \eta(\dGabor{g_1}{g_2}{a}{\sfLa})(u,v)&=\sum_{t=0}^{N-1}K(\dGabor{g_1}{g_2}{a}{\sfLa})(t,t-u)e^{\frac{-2\pi i t v}{N}}\notag\\
 		 &=\sum_{t=0}^{N-1}\sum_{k=0}^{A-1}\sum_{l=0}^{B-1}a(\a k,\b l)\overline{g_1(t-u-\a k)}g_2(t-\a k)e^{\frac{2 \pi i \b l u}{N}}e^{\frac{-2\pi i t v}{N}}\notag\\
 		 &=\sum_{k=0}^{A-1}\sum_{l=0}^{B-1}a(\a k,\b l)e^{\frac{2 \pi i \b l u}{N}}\notag\\
 		 &\times\sum_{t=0}^{N-1}\overline{g_1(t-u-\a k)}g_2(t-\a k)e^{\frac{-2\pi i t v}{N}}\label{Eq-proof-spreading}.
 	\end{align}
 	Performing the substitution $t'=t-\a k$ in \eqref{Eq-proof-spreading} gives
 	\begin{align*}
 		\sum_{t=0}^{N-1}\overline{g_1(t-u-\a k)}g_2(t-\a k)e^{\frac{-2\pi i t v}{N}}&=\sum_{t'=0}^{N-1}g_2(t')\overline{g_1(t'-u)}e^{\frac{-2 \pi i (t'+\a k)v}{N}}\\
 		&=\sum_{t'=0}^{N-1}g_2(t')\overline{g_1(t'-u)}e^{\frac{-2 \pi i v}{N}-\frac{-2\pi i \a k v}{N}}\\
 		&=V_{g_1}g_2(u,v)e^{-\frac{-2\pi i \a k v}{N}}.
 	\end{align*}
	Hence, recalling the definition of $\Sha_{(\a,\b)}$, $\sfF_s$, $\sfF_2$, and using Lemma \ref{Lem-sym-Fourier-Sha} together with \eqref{Eq-Conv-Th-F2}, we get
	\begin{align*}
		 \eta(\dGabor{g_1}{g_2}{a}{\sfLa})(u,v)&=\sum_{k=0}^{A-1}\sum_{l=0}^{B-1}a(\a k,\b l)e^{\frac{2 \pi i \b l u}{N}}e^{-\frac{-2\pi i \a k v}{N}}V_{g_1}g_2(u,v)\\
		 &= N \sfF_s\left(a\cdot\Sha_{(\a,\b)}\right)(u,v)V_{g_1}g_2(u,v)\\
		 &=\sfF_2 \left(a^T\cdot\Sha_{(\a,\b)}^T\right)(-u,v)V_{g_1}g_2(u,v)\\
		 &=\sfF_2 \left(a^T\cdot\Sha_{(\b,\a)}\right)(-u,v)V_{g_1}g_2(u,v)\\
		 &=\frac{1}{N^2}\left(\sfF_2 a^T\ast\sfF_2\Sha_{(\b,\a)}\right)(-u,v)V_{g_1}g_2(u,v)\\
		 &=\frac{1}{N}\sum_{k=0}^{N-1}\sum_{l=0}^{N-1}\frac1N\sfF_2 a^T(-u-k,v-l)\sfF_2\Sha_{(\b,\a)}(k,l)V_{g_1}g_2(u,v)\\
		 &=\frac{1}{N}\sum_{k=0}^{N-1}\sum_{l=0}^{N-1}\sfF_s a(u+k,v-l)AB\Sha_{(B,A)}(k,l)V_{g_1}g_2(u,v)\\
		 &=\frac{AB}{N}\sum_{l=0}^{\a -1}\sum_{k=0}^{\b-1}\sfF_s a(u+ B k, v- A l) V_{g_1}g_2(u,v).
	\end{align*}
This concludes the proof.
\end{proof}

 We shall frequently denote the periodization of $\sfA$ by $\AP$:
\begin{equation}\label{Eq-periodization-sfA}
	\AP(u,v)\coloneqq \sum_{l=0}^{\al-1}\sum_{k=0}^{\b-1}\sfA(u+Bk,v-Al),
\end{equation}
the periodicity is meant in the sense that
\begin{equation}\label{Eq-periodicity-A_P}
	\AP(u,v)=\AP(u+Bk,v+Al)
\end{equation}
for $u,v=0,\ldots,N-1$ and $k=0,\ldots,\b-1$, $l=0,\ldots,\a-1$.\\ 
So that \eqref{Eq-spreading-function-G} can be written as
\begin{equation}\label{Eq-spreading-function-G-A_P}
	\eta(\dGabor{g_1}{g_2}{a}{\sfLa})(u,v)=\frac{N}{\a \b}\AP(u,v) V_{g_1}g_2(u,v).
\end{equation}
The factor $N/\a\b$ is also called \textit{redundancy}. In the finite dimensional case the interpretation of this number is straightforward, because one uses $A\cdot B$ to represent a vector in $\bR^N$. This leads to an oversampling of
\begin{equation*}
	\frac{AB}{N}=\frac{N}{\a}\frac{N}{\b}\frac1N=\frac{N}{\a\b}.
\end{equation*}
By using the convolution theorem for $\sfF_s$, cf. \eqref{Eq-Conv-Th-F2} and  \eqref{Eq-relation-Fs-F2} and see \cite[Theorem 4.3]{fekolu09}, and Lemma \ref{Lem-sym-Fourier-Sha} we get
\begin{equation*}
	 \sfF_s\left(a\cdot \Sha_{(\a,\b)}\right)(u,v)=\frac{1}{\a\b}\sum_{l=0}^{\al-1}\sum_{k=0}^{\b-1}\sfA(u+Bk,v-Al).
\end{equation*}
Therefore 
\begin{equation}\label{Eq-periodization-sfA-2}
	\AP(u,v)=\a\b\,\sfF_s\left(a\cdot \Sha_{(\a,\b)}\right)(u,v).
\end{equation}

\begin{example} \label{sec:firstexampl0}
As example, we consider a low pass filter as it is often implemented in practice. We choose the frequency response $\hat{h} \in \bC^N$ equal to the characteristic function, which is $1$ on $[-R,R]$ and zero elsewhere. The resulting convolution operator $H$ is compared to the filter generated by a Gabor multiplier $\dGab$ with symbol $a= \mathbf{1} \otimes\hat{h} $. As analysis and synthesis window for $\dGab$ we choose the Gaussian window normalized by the factor $1/N$, which is the redundancy since we take $\a=\b=1$. Both operations are applied to a random vector $f_0$.

A graphical comparison of the LTI filter approach and of the Gabor multiplier one is shown in Figure \ref{fig:filtered_signal}.


\begin{figure}
	\includegraphics[scale=0.4]{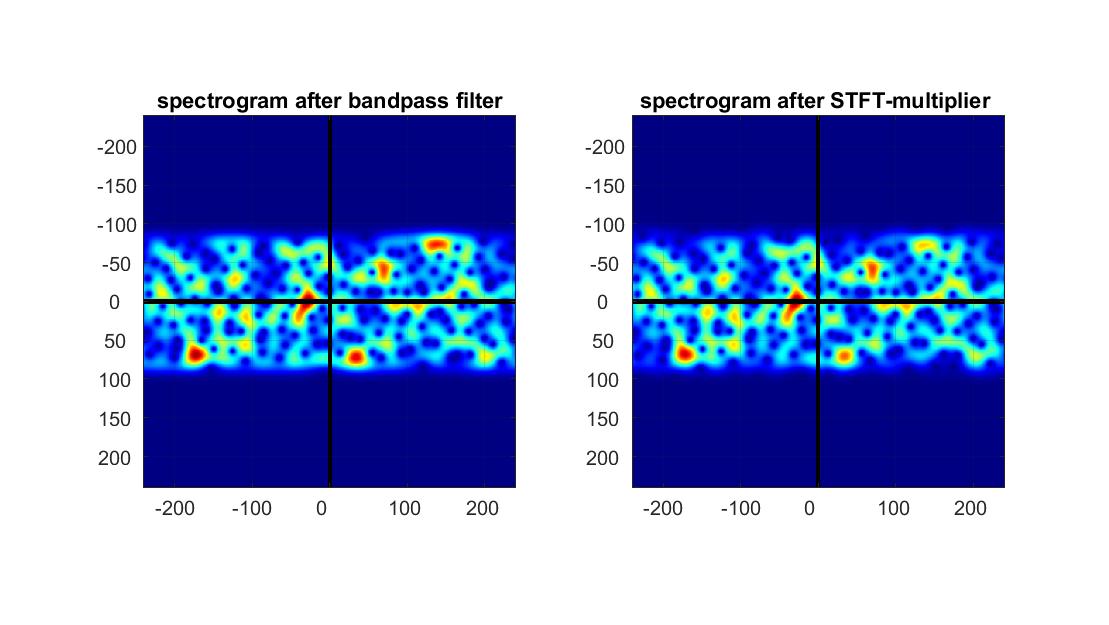}
\caption{The left part shows the (Gaussian) spectrogram of the output of an ``ideal band-pass filter'' with cut-off frequency $R=80$, applied to a random signal in $\bC^N$, $N=480$. The right hand side represents the spectrogram of the output of the corresponding STFT multiplier $\dGab$. Here we use the number of samples as normalization on time and frequency scale.}
	\label{fig:filtered_signal}
\end{figure}

\end{example}

\subsection{Representation of LTI filter as Gabor Multiplier}
\label{sec:representation}
Using intuition and visual comparison as an indication that the implementation of a LTI filter by a Gabor multiplier seems to work quite well, but being aware of continuous results, we are now going to analize under which conditions it is analytically  possible to have equivalence between a LTI filter and a Gabor multiplier. We will see immediately in the first theorem that exactly the most interesting class of perfect filters with characteristic function as frequency response does not qualify as suited candidates.\\

The result below is a consequence of the general setting in Theorem \ref{mainlp}, but the estimate \eqref{Eq-estimate-1-2} is new.
\begin{theorem}
	Let $T_{m_2} \colon L^2(\bR)\rightarrow L^2(\bR)$ be a LTI filter with frequency response $m_2=\hat{h} = \chi_{\Omega}$, $\Omega\subsetneq\bR$ interval. Then  $T_{m_2}$ can never be represented exactly as Gabor multiplier with symbol $a=1\otimes m$, $m\in\ L^\infty(\bR)$, and 
	\begin{equation}\label{Eq-estimate-1-2}
		\norm{T_{m_2}-G^{g_1,g_2}_a}_{Op} \ge \frac{1}{2}  
	\end{equation}
for every  and $g_1,g_2\in L^2(\bR)$.
\end{theorem}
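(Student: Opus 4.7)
The plan is to exploit the structural fact that any STFT/Gabor multiplier with a tensor symbol $1\otimes m$ is automatically a Fourier multiplier with a smoothed frequency response, and then to bound from below the $L^\infty$-distance between the characteristic function $\chi_\Omega$ and any continuous bounded function.

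First I would invoke the kernel computation carried out in Theorem~\ref{main} (see~\eqref{kernelKa}): the integral kernel of $G^{g_1,g_2}_{1\otimes m}$ equals $\hat m(y-t)\,\cC_{g_1,g_2}(y-t)$, which depends only on the difference $y-t$. Hence $G^{g_1,g_2}_{1\otimes m}$ is translation invariant, i.e.\ a Fourier multiplier $T_{m'}$ with
\[
m' \;=\; m\ast \cF^{-1}(\cC_{g_1,g_2}).
\]
Because $g_1,g_2\in L^2(\bR)$ and $m\in L^\infty(\bR)$, Lemma~\ref{finestre l2} gives $m'\in\cC_b(\bR)$. Consequently $T_{m_2}-G^{g_1,g_2}_{1\otimes m}=T_{\chi_\Omega-m'}$ is itself an $L^2$-Fourier multiplier and the Plancherel identity yields
\[
\norm{T_{m_2}-G^{g_1,g_2}_{1\otimes m}}_{Op}\;=\;\norm{\chi_\Omega-m'}_{L^\infty(\bR)}.
\]
It therefore suffices to bound this $L^\infty$ quantity from below by $1/2$, uniformly over all continuous bounded functions $m'$.

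Since $\Omega\subsetneq\bR$ is a proper interval, it has a finite endpoint $x_0\in\partial\Omega$. Set $c:=m'(x_0)\in\bC$. For every $\eps>0$ continuity of $m'$ yields a two-sided neighbourhood $U$ of $x_0$ on which $|m'-c|<\eps$. The sets $U\cap\Omega^\circ$ and $U\setminus\overline{\Omega}$ both have positive Lebesgue measure (they are one-sided neighbourhoods of $x_0$); on the first $|\chi_\Omega-m'|\ge|1-c|-\eps$, on the second $|\chi_\Omega-m'|\ge|c|-\eps$. Using the elementary inequality $\max(|1-c|,|c|)\ge 1/2$, valid for every $c\in\bC$, we obtain
\[
\norm{\chi_\Omega-m'}_{L^\infty(\bR)}\;\ge\;\max(|1-c|,|c|)-\eps\;\ge\;\tfrac12-\eps,
\]
and letting $\eps\to0$ gives the claimed estimate~\eqref{Eq-estimate-1-2}; in particular it precludes the equality $T_{m_2}=G^{g_1,g_2}_{1\otimes m}$. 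The only substantive point is the observation that $m'$ is \emph{continuous}, so that its pointwise value at the jump point $x_0$ is well-defined and cannot simultaneously lie close to $0$ and close to $1$; everything else is an assembly of Theorem~\ref{main}, Lemma~\ref{finestre l2}, and the Plancherel identity for $L^2$-Fourier multipliers.
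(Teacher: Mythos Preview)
Your proof is correct and follows essentially the same strategy as the paper: identify $G^{g_1,g_2}_{1\otimes m}$ as a Fourier multiplier $T_{m'}$ with continuous symbol $m'$, then use $\|T_{\chi_\Omega}-T_{m'}\|_{Op}=\|\chi_\Omega-m'\|_{L^\infty}$ and bound this from below by $1/2$.

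The differences are only in execution. For the first step the paper works with spreading functions (obtaining $\eta(G^{g_1,g_2}_a)=\mathscr{A}\cdot V_{g_1}g_2$ and then $\hat h = m\ast\cF(g_2)\cF^{-1}(\overline{g_1})$), whereas you invoke the kernel formula~\eqref{kernelKa} and Lemma~\ref{finestre l2} directly; these are equivalent routes to the same conclusion that $m'\in\cC_b(\bR)$. For the $1/2$ lower bound the paper argues via an intermediate-value dichotomy on $|m'|$ (either $|m'|$ hits $1/2$ somewhere, or it stays on one side of $1/2$), while you localise at a finite endpoint $x_0\in\partial\Omega$ and use $\max(|1-c|,|c|)\ge 1/2$. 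Your version is arguably cleaner: it makes the passage from pointwise values to the \emph{essential} supremum explicit by working on the positive-measure sets $U\cap\Omega^\circ$ and $U\setminus\overline{\Omega}$, a point the paper's argument leaves somewhat implicit.
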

\begin{proof}
	The spreading function is a Banach Gelfand Triple isomorphism between  $(\cB, \cH, \cB')$ and $(S_0, L^2, S_0')(\bR^2)$, see \cite{FeiKoz1998} for notations. Therefore two operators are identical in $(\cB, \cH, \cB')$  if and only if their spreading functions are identical. 
	The integral kernel of a Fourier multiplier with symbol $m_2$ was calculated in \eqref{kernelmB} and it is related to the spreading function as follows:
	\begin{align*}
		\eta(T_{m_2})(x,\o)&=\intrd K(T_{m_2})(y,y-x)e^{-2 \pi i \o y}\,dy\\
		&=\intrd T_{y}\hat{m}_2(y-x)e^{-2\pi i \o y}\,dy\\
		&=\intrd\hat{m}_2(-x)e^{-2 \pi i \o y}\,dy\\
		&=\left(\cI\circ\cF(m_2)\otimes\delta\right)\phas\\
		&=\left(h\otimes\delta\right)\phas.
	\end{align*}
	The spreading function of a Gabor multiplier $G^{g_1,g_2}_a$ defined trough a lattice $\La=\al\zd\times\b\zd$ is given by
	\begin{equation}
		\eta(G^{g_1,g_2}_a)(x,\o)=\cF_s(a)\phas\cdot V_{g_1}g_2\phas\eqqcolon\scrA\phas\cdot V_{g_1}g_2\phas,
	\end{equation}
	  where $\scrA=\cF_s a=\cF^{-1}(m)\otimes\cF(1)=\cF^{-1}(m)\otimes\delta$ is the $(\frac{1}{\beta}, \frac{1}{\alpha})$-periodic symplectic Fourier Transform of the symbol $a=1\otimes m$ (compare \cite{DT2010}). Therefore a Gabor multiplier is equivalent to a convolution operator if and only if 
\begin{equation} \label{eq:spreading}
 		\left(h\otimes\delta\right)\phas=\left(\cF^{-1}(m)\otimes\delta\right) V_{g_1}g_2\phas\qquad \forall \phas\in \rdd.
	\end{equation}
This  gives  
	\begin{equation}
		h(x)= \cF^{-1}(m)V_{g_1}g_2(x,0) \quad \Leftrightarrow \quad \hat{h} = m*\cF \left( V_{g_1}g_2(\cdot,0) \right).
	\end{equation}
	Let us calculate  
	\begin{align*}
		\cF \left( V_{g_1}g_2(\cdot,0) \right)(\o)&=\intrd \intrd g_2(t) \overline{g_1(t-x)}\,dt e^{-2 \pi i  \omega x}\, dx\\
		& = \intrd \intrd g_2(t) \overline{g_1(x')}e^{-2 \pi i (x-y)}\,dx' dt \\
		&= \cF(g_2)(\o)\cF^{-1}(\overline{g_1})(\o).  
	\end{align*}
	Therefore 
	\begin{equation} \label{eq:freResponse}
		\hat{h} = m \ast\cF(g_2)\cF^{-1}(\overline{g_1}).
	\end{equation}
	Since the windows $g_1,g_2$ belong to $L^2(\rd)$, we have $s(\o)\coloneqq \cF(g_2)\cF^{-1}(\overline{g_1})\in L^1(\rd)$ and the right-hand side of \eqref{eq:freResponse} is bounded and uniformly continuous. Since we are assuming $\hat{h}$ to be the characteristic function of an interval $\Omega\subsetneq\bR$, we obtained the first assertion of the thesis. \\
	About estimate \eqref{Eq-estimate-1-2} we distinguish two cases. If there is $\o_0\in\bR$ such that $\abs{s(\o_0)}=1/2$, being the image of $\hat{h}$ the set $\{0,1\}$, then 
	\begin{equation*}
		\abs{\hat{h}(\o_0)-s(\o_0)}\geq\abs{\abs{\hat{h}(\o_0)}-\abs{s(\o_0)}}\geq\frac12
	\end{equation*}
which implies 
\begin{equation*}
	\sup_{\o\in\bR}\abs{\hat{h}(\o)-s(\o)}=\norm{T_{m_2}-G^{g_1,g_2}_a}_{Op}\geq\frac12.
\end{equation*}
	 If the value $1/2$ is never attained by $\abs{s(\o)}$ the argument is identical. This concludes the proof.
\end{proof}

In the finite discrete case the problem presents itself in a similar way. In the next theorem we state the necessary conditions on the window functions in order to get perfect equivalence. It can be seen that without subsampling perfect equivalence would in theory be always possible if $\supp(h) \subseteq \supp (\overline{\cI g_1}\ast g_2)$. Numerically, we observe however the same behavior as we have in the continuous case.
\begin{theorem} \label{th:representation1}
	Le us fix a LTI filter $H\colon \bC^N\to\bC^ N$ with impulse response $h\in\bC^N$ and lattice constants $\a,\b \geq 1$.\\
	If $H$ can be written as a Gabor multiplier $\dGab$ with lattice constants $\a,\b$, for some symbol $a \in \bC^{N \times N}$ and window functions $g_1, g_2 \in \bC^N$, then the following hold for every $\forall u\in\supp(h)$:
	\begin{itemize}
		\item[1)] $V_{g_1}g_2(u, 0) = (\overline{\cI g_1} \ast g_2)(u) \neq  0$;
		\item[2)]  $V_{g_1}g_2(u+Bk, lA) = 0,$  $\forall  k= 0,\ldots,\b-1, \quad \forall l=1,\ldots,\a-1$;
		\item[3)]  $V_{g_1}g_2(u+Bk, 0) = 0,$  $\forall  k=1,\ldots,\b-1 \quad s.t. \quad (u + Bk) \notin \supp (h)$;
		\item[4)]  $V_{g_1}g_2(u+Bk, 0) =  \frac{h(u+Bk)}{h(u)}V_{g_1}g_2(u,0),$   $\forall k= 1,\ldots,\b-1\quad s.t. \quad (u + Bk) \in \supp (h)$.
	\end{itemize}
	Vice versa, if there are window functions $g_1, g_2 \in \bC^N$ fulfilling $1)$--\,$4)$, then there exists a symbol $a\in\bC^{N\times N}$ such that $H=\dGab$.
\end{theorem}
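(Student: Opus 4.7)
\medskip

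\noindent\textbf{Proof plan.} The backbone of the argument is the fact that the spreading function is a bijection between linear operators on $\bC^N$ and matrices in $\bC^{N\times N}$ (since $\{\pi(k,l)\}_{k,l=0}^{N-1}$ is an orthonormal basis of $\bC^{N\times N}$). Hence $H=\dGab$ if and only if
\begin{equation*}
\eta_H(u,v)=\eta(\dGab)(u,v),\qquad \forall\,u,v\in\bZ_N.
\end{equation*}
By \eqref{Eq-spreading-function-H} and \eqref{Eq-spreading-function-G-A_P} this reduces to the single identity
\begin{equation}\label{Eq-plan-main}
h(u)\,\delta(v)=\frac{N}{\a\b}\,\AP(u,v)\,V_{g_1}g_2(u,v),\qquad (u,v)\in\bZ_N\times\bZ_N,
\end{equation}
so the whole theorem boils down to analysing when one can find a $(B,A)$-periodic function $\AP$ on $\bZ_N\times\bZ_N$ that satisfies \eqref{Eq-plan-main}; once such an $\AP$ is in hand, the symbol is recovered by setting $\sfA$ equal to $\AP$ on the fundamental cell $\{0,\dots,B-1\}\times\{0,\dots,A-1\}$ and zero elsewhere, and then $a=\sfF_s^{-1}\sfA$ via \eqref{Eq-periodization-sfA-2}.

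\medskip

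\noindent\textbf{Necessity $(\Rightarrow)$.} Assuming \eqref{Eq-plan-main}, I fix $u\in\supp(h)$ and evaluate at $(u,0)$: the left-hand side is nonzero, which forces both $\AP(u,0)\ne 0$ and $V_{g_1}g_2(u,0)\ne 0$, giving $(1)$. Next, by the periodicity \eqref{Eq-periodicity-A_P}, $\AP(u+Bk,lA)=\AP(u,0)\ne 0$ for every admissible $k,l$; evaluating \eqref{Eq-plan-main} at $(u+Bk,lA)$ with $l=1,\dots,\a-1$ gives $\delta(lA)=0$ on the left, so $V_{g_1}g_2(u+Bk,lA)=0$, which is $(2)$. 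Finally, evaluating at $(u+Bk,0)$ with $k=1,\dots,\b-1$ yields $h(u+Bk)=\tfrac{N}{\a\b}\AP(u,0)V_{g_1}g_2(u+Bk,0)$. Dividing this by $h(u)=\tfrac{N}{\a\b}\AP(u,0)V_{g_1}g_2(u,0)$ produces $(4)$ when $u+Bk\in\supp(h)$, and forces $V_{g_1}g_2(u+Bk,0)=0$ when $u+Bk\notin\supp(h)$, which is $(3)$.

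\medskip

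\noindent\textbf{Sufficiency $(\Leftarrow)$.} Conversely, I define $\AP$ on the fundamental cell and extend by $(B,A)$-periodicity. For $(u_0,v_0)$ with $v_0\ne 0$ I simply put $\AP(u_0,v_0):=0$; for $v_0=0$ I distinguish two cases. If the coset $u_0+B\bZ_N$ contains a point $u_*\in\supp(h)$, I set
\begin{equation*}
\AP(u_0,0):=\frac{\a\b}{N}\,\frac{h(u_*)}{V_{g_1}g_2(u_*,0)},
\end{equation*}
which is well-defined by $(1)$, and independent of the chosen $u_*$ by $(4)$; otherwise I set $\AP(u_0,0):=0$. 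Then I verify \eqref{Eq-plan-main} pointwise: for $(u_0+Bk,v_0+Al)$ with $v_0\ne 0$ both sides vanish; for $(u_0+Bk,lA)$, $l\ne 0$, the right-hand side is zero by $(2)$ (or trivially in the ``no $u_*$'' case); and for $(u_0+Bk,0)$ the right-hand side matches $h(u_0+Bk)$ by $(4)$ (in the nonzero case), is zero by $(3)$ (when $u_0+Bk\notin\supp(h)$), or is zero in the degenerate case. Inverting the symplectic Fourier transform of the resulting $\sfA$ (supported on the fundamental cell) yields the required symbol $a$.

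\medskip

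\noindent\textbf{Main obstacle.} The one delicate point is the well-definedness of $\AP(u_0,0)$ in the sufficiency part: several points of $\supp(h)$ may lie in the same coset modulo $B$, and the formula must give the same value for every such representative. This is precisely the role of condition $(4)$, which encodes the compatibility $h(u_*)/V_{g_1}g_2(u_*,0)=h(u_*')/V_{g_1}g_2(u_*',0)$ whenever $u_*\equiv u_*'\pmod{B}$ and both lie in $\supp(h)$. Condition $(2)$ then rules out the ``vertical'' obstructions at non-zero multiples of $A$, and $(3)$ handles the ``horizontal'' spurious mass inside the same coset; together they guarantee that the single $\AP$-value determined on each coset is consistent with all the shifted instances of \eqref{Eq-plan-main}.
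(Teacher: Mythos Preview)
Your proof is correct and follows the same strategy as the paper: both reduce $H=\dGab$ to the spreading-function identity $h(u)\delta(v)=\tfrac{N}{\a\b}\AP(u,v)V_{g_1}g_2(u,v)$ and then exploit the $(B,A)$-periodicity of $\AP$ to read off conditions 1)--4) in the forward direction and to build a suitable $\AP$ in the converse.

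The only notable difference is in the sufficiency construction. The paper defines $\AP$ via an explicit averaging formula, namely $\AP=\tfrac{\a\b}{N}\bigl(\tfrac{h}{C\cdot V}\ast\chi_{B\bZ_N}\bigr)\otimes\chi_{A\bZ_N}$, where $C(u)$ counts the points of $\supp(h)$ in the coset $u+B\bZ_N$ and $V$ extends $V_{g_1}g_2(\cdot,0)$; it then checks that condition 4) collapses the average to a single value and produces the closed-form symbol $a=\tfrac{\a\b}{N^2}\,\mathbf{1}\otimes\cF_N\bigl(\tfrac{h}{C\cdot V}\bigr)$. You go straight to that single value by picking any representative $u_*\in\supp(h)$ in the coset and invoking 4) for well-definedness, which is cleaner and avoids the bookkeeping with $C$ and $V$. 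The trade-off is that the paper ends with an explicit, computable formula for $a$ (useful for the applied audience of the section), whereas your abstract recovery of $a$ from $\sfA$ supported on a fundamental cell is perfectly valid but less concrete.
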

\begin{proof}
	Let us assume that $H=\dGab$ for some $a \in \bC^{N \times N},\,g_1, g_2 \in \bC^N$. Two operators are identical if and only if their spreading functions are identical. From \eqref{Eq-spreading-function-H} and \eqref{Eq-spreading-function-G-A_P}, $H=\dGab$ if and only if 
	\begin{equation} \label{eq:spread}
		(h\otimes\delta)(u,v)=\frac{N}{\a\b}\AP(u,v)V_{g_1}g_2(u,v)
	\end{equation}
	This, in turn is equivalent to
	\begin{align}
		h(u) & = N\left(\a\b\right)^{-1}\AP(u,0)V_{g_1}g_2(u,0),\qquad u=0,\ldots,N-1; \label{eq:01}\\
		0 & =  N\left(\a\b\right)^{-1}\AP(u,v)V_{g_1}g_2(u,v),\qquad u,v=0,\ldots,N-1,\quad v\neq0.\label{eq:02}
	\end{align}
	From equation \eqref{eq:01} condition 1) follows.
	Note that for $\AP(u,0) = 0$ by equation \eqref{eq:spread} we get $u \notin \supp(h)$. Hence by equation \eqref{eq:02} together with the periodicity of $\AP$ follows condition 2) follows. The periodicity of $\AP$ in the time domain together with equation \eqref{eq:01} gives condition 3).
	Finally by \eqref{eq:01} and \eqref{Eq-periodicity-A_P} we compute
	\begin{equation}
		\frac{\a\b}{N}\frac{ h(u)}{ V_{g_1}g_2(u,0)}=\AP(u,0)=\AP(u+Bk,0)=\frac{\a\b}{N}\frac{h(u+Bk)}{V_{g_1}g_2(u+Bk,0)} 
	\end{equation}
	for $k=1,...,\b-1,\,(u+kB)\in\supp(h)$, hence we get condition 4).\newline 
	On the other hand, let us consider $g_1,g_2\in\bC^N$ fulfilling conditions $1)$ --\, $4)$. 
 	Let us define for $u=0,\dots,N-1$
 	\begin{equation}
 		V(u)\coloneqq
 		\begin{cases}
 			V_{g_1}g_2(u,0)\quad&\text{if}\quad u\in\supp(h)\\
 			1\quad&\text{otherwise}
 		\end{cases}
 	\end{equation}
 and 
 \begin{equation}
 	C(u)\coloneqq\begin{cases}
 		\#\{\{u+B\bZ_N\}\cap\supp(h)\}\quad&\text{if}\quad \{u+B\bZ_N\}\cap\supp(h)\neq\varnothing\\
 		1\quad&\text{otherwise},
 	\end{cases}
 \end{equation}
we notice that $C(u+Bk)=C(u)$ for any $k=0,\ldots,\b-1$, since $u+B\bZ_N=u+Bk+B\bZ_N$.\\
Let us observe that 
\begin{align*}
	\frac{h}{C\cdot V}\ast\chi_{B\bZ_N}(u)&=\sum_{k=0}^{N-1}\frac{h(u-k)}{C(u-k)V(u-k)}\chi_{B\bZ_N}(k)=\frac{1}{C(u)}\sum_{k=0}^{N-1}\frac{h(u-k)}{V(u-k)}\chi_{B\bZ_N}(k)\\
	&
	=\frac1C\cdot\left(\frac{h}{V}\ast\chi_{B\bZ_N}\right)(u).
\end{align*}
We define 
\begin{equation}
	\AP(u,v)\coloneqq \frac{\a\b}{N}\left(\frac{h}{C\cdot V}\ast\chi_{B\bZ_N}\right)\otimes\chi_{A\bZ_N}(u,v),
\end{equation}
which is periodic in the sense of \eqref{Eq-periodicity-A_P} since  $C(u+Bk)=C(u)$ for any $k=0,\ldots,\b-1$ and 
\begin{align*}
	\left(\frac{h}{V}\ast\chi_{B\bZ_N}\right)(u+Bk)&=\frac{1}{C(u)}\sum_{j=0}^{N-1}\frac{h(j)}{V(j)}\chi_{B\bZ_N}(u+Bk-j)\\
	&=\frac{1}{C(u)}\sum_{j=0}^{N-1}\frac{h(j)}{V(j)}\chi_{B\bZ_N-Bk}(u-j)\\
	&=\frac{1}{C(u)}\sum_{j=0}^{N-1}\frac{h(j)}{V(j)}\chi_{B\bZ_N}(u-j)\\
	&=\left(\frac{h}{V}\ast\chi_{B\bZ_N}\right)(u).
\end{align*}
In order to verify \eqref{eq:01}, fix $u\in\{0,\ldots,N-1\}$ and let us write the partition
\begin{equation*}
	\{0,\ldots,\b-1\}=S_{in}(u)\cup S_{out}(u),
\end{equation*}
where
\begin{align*}
	S_{in}(u)&\coloneqq\{k\in\{0,\ldots,\b-1\}\,|\,u+Bk\in\supp(h)\},\\
	S_{out}(u)&\coloneqq\{k\in\{0,\ldots,\b-1\}\,|\,u+Bk\notin\supp(h)\}.
\end{align*}
Therefore if  $u\in\supp(h)$ we have $0\in S_{in}(u)\neq\varnothing$, starting from the right-hand side of \eqref{eq:01}  and using 4) we get
\begin{align*}
	N(\a\b)^{-1}\AP(u,0)V_{g_1}g_2(u,0) &=\frac{1}{C(u)}\sum_{k=0}^{\b-1}\frac{h(u+Bk)}{V(u+Bk)}V_{g_1}g_2(u,0)\\
	&=\frac{1}{C(u)}\sum_{k\in S_{in}(u)}\frac{h(u+Bk)}{V_{g_1}g_2(u+Bk,0)}V_{g_1}g_2(u,0)\\
	&=\frac{1}{C(u)}\sum_{k\in S_{in}(u)}\frac{h(u)}{V_{g_1}g_2(u,0)}V_{g_1}g_2(u,0)\\
	&=\frac{1}{C(u)}C(u)h(u).
\end{align*}
If $u\notin\supp(h)$ and $S_{in}(u)=\varnothing$, then $\AP(u,0)=0$ and \eqref{eq:01} is fulfilled. If $u\notin\supp(h)$ and $S_{in}(u)\neq\varnothing$, then $u+Bj=z\in\supp(h)$ for some $j\in S_{in}(u)$. Hence we can write $u=z-Bj=z+Bs$ for a certain $s\in\{0,\ldots,\b-1\}$ and from 3) we get $V_{g_1}g_2(u,0)=V_{g_1}g_2(z+Bs,0)=0$, which guarantees \eqref{eq:01}.\\
Equation \eqref{eq:02} is fulfilled if $v\notin A\bZ_N\smallsetminus\{0\}$. Let us fix $v\in A\bZ_N\smallsetminus\{0\}$ and distinguish two cases: if $u$ appearing in \eqref{eq:02} belongs to $\supp(h)+B\bZ_N$, then $V_{g_1}g_2(u,v)=0$ due to 2) and we are done; if $u$ does not belong to $\supp(h)+B\bZ_N$, then $\AP(u,v)=0$ and \eqref{eq:02} if verified once more.\\
Eventually, in order to find a symbol $a$ which gives the function $\AP$ defined above, we use \eqref{Eq-periodization-sfA-2}:
\begin{equation}
	\a\b\sfF_s(a\cdot\Sha_{\a,\b})(u,v)=\frac{\a\b}{N}\left(\frac{h}{C\cdot V}\ast\chi_{B\bZ_N}\right)\otimes\chi_{A\bZ_N}(u,v).
\end{equation}
Being $\sfF_s^{-1}=\sfF_s$ and for \eqref{Eq-F_N-characteristic} we derive
\begin{align*}
	a(u,v)\Sha_{(\a,\b)}(u,v)&=\frac1N\sfF_s\left(\left(\frac{h}{C\cdot V}\ast\chi_{B\bZ_N}\right)\otimes\chi_{A\bZ_N}\right)(u,v)\\
	&=\frac1N A^{-1}\chi_{\a\bZ_N}(u)\cF_N\left(\frac{h}{C\cdot V}\ast\chi_{B\bZ_N}\right)(v)\\
	&=\frac{\a}{N^2}\chi_{\a\bZ_N}(u)\cF_N\left(\frac{h}{C\cdot V}\right)(v)\b\chi_{\b\bZ_N}(v)\\
	&=\frac{\a\b}{N^2}\cF_N\left(\frac{h}{C\cdot V}\right)(v)\Sha_{(\a,\b)}(u,v).
\end{align*} 
So that a possible choice for the symbol is
\begin{equation}
	a(u,v)=\frac{\a\b}{N^2}\left(\mathbf{1}\otimes\cF_N\left(\frac{h}{C\cdot V}\right)\right)(u,v).
\end{equation}	
This concludes the proof.
\end{proof}

\begin{figure}[ht]
	\leftskip-3cm
	\includegraphics[scale=0.2]{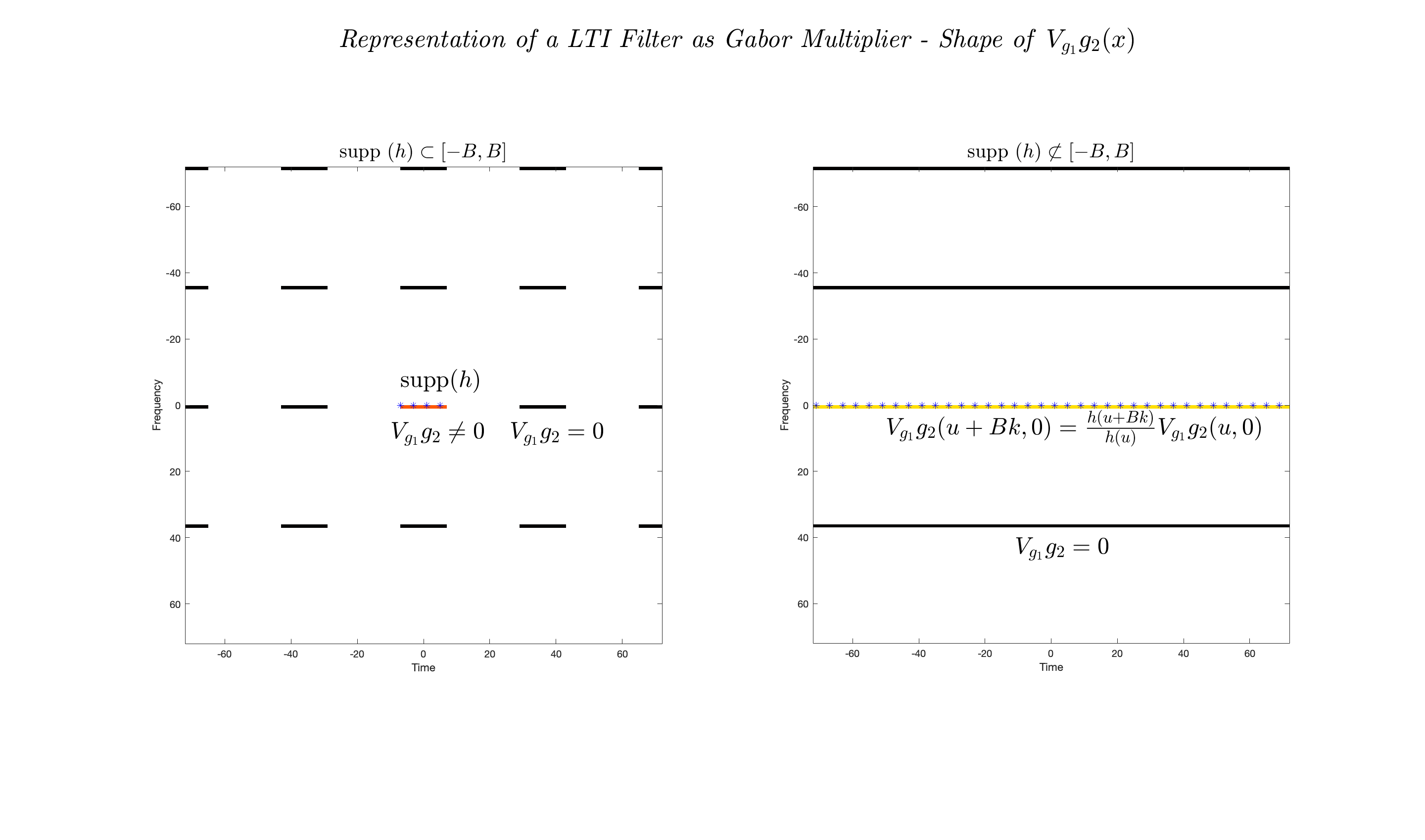}
	\caption{This figure gives a visual outline of Theorem \ref{th:representation1} on the representation of a LTI filter by a Gabor multiplier. The conditions on the support of $V_{g_1}g_2$ are shown once for $\supp(h) \subset [-B,B]$ and once for $\supp(h) \not \subset [-B,B]$. Black lines indicate the regions where $V_{g_1}g_2$ has to be zero.}
	\label{fig:SkizzeSuppVGG}
\end{figure}

\begin{remark}
	Theorem \ref{th:representation1} can be seen as a special result on the reproducing property, compare \cite{hu92-2} or \cite{stta05} equation (4):
	\begin{equation*}
		f(t)\equiv\sqrt{2\pi}T\sum_{k=-\infty}^{\infty}f(kT)\psi(t-kT).
	\end{equation*}
	 If $\supp(h) \subseteq (-B,B)$ we would get perfect reproduction for $V_{g_1}g_2(u,0) = 1$ on $u\in \supp(h)$ and $V_{g_1}g_2(u,v) = 0$ outside the fundamental region of the adjoint lattice for $(u,v) \notin (-B,B) \times (-A,A)$. If $\supp(h) \subset (-B,B)$, the region $X$ with $\supp(h) \subset X \subset (-B,B)$ introduces the freedom to choose $(\overline{\cI g_1}*g_2)(u)$ having smooth decay on $X$. As for an LTI filter $\eta_H(u,v)=0 \quad \forall \, v\neq 0 $, see \eqref{Eq-spreading-function-H}, we have this freedom in the frequency domain for $Y := \left\{(x,y): 0 < |y| < A \right\}$ irrespective of the choice of $h$.
\end{remark}

The conditions given in Theorem \ref{th:representation1} will be central for the remaining part of this section. Therefore a visual outline of them is shown in Figure \ref{fig:SkizzeSuppVGG}. The next theorem can be seen as a special case of the last result,  having no subsampling, i.e. $\a=\b=1$. 

\begin{theorem} \label{th:representation2}
	Consider a LTI filter $H\colon\bC^N\to\bC^N$ with impulse response $h\in\bC^N$ and $g_1,g_2\in\bC^N$ with  $(\overline{\cI g_1} \ast g_2)(u)\neq 0$ for every $u=0,\ldots,N-1$. Then the $H$ can be represented as Gabor  multiplier $\dGab$ with $\a=\b=1$ and lower symbol
	\begin{equation} \label{eq:relGabMulSTFT}
		a = \frac{1}{N^2} \left( \mathbf{1} \otimes\cF_N \left( \frac{h}{\overline{\cI g_1} \ast g_2} \right)\right).
	\end{equation}
\end{theorem}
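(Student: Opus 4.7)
The plan is to derive the theorem as the $\alpha = \beta = 1$ specialization of Theorem \ref{th:representation1} and then identify the symbol by a short spreading-function calculation.

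First, I would verify that the four conditions of Theorem \ref{th:representation1} hold. The pointwise nonvanishing hypothesis $(\overline{\cI g_1}\ast g_2)(u) \neq 0$ immediately subsumes condition (1). For conditions (2), (3), (4), the index ranges $k = 1,\ldots,\beta-1$ and $l = 1,\ldots,\alpha-1$ become empty when $\alpha = \beta = 1$, so these conditions hold vacuously. Thus Theorem \ref{th:representation1} already guarantees the existence of some symbol with $H = \dGab$; all that remains is to pinpoint the explicit formula \eqref{eq:relGabMulSTFT}.

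Second, I would verify \eqref{eq:relGabMulSTFT} directly by showing that its associated Gabor multiplier has spreading function equal to $\eta_H = h\otimes\delta$. Since $\alpha = \beta = 1$ means $A = B = N$, the periodization in \eqref{Eq-periodization-sfA} collapses to the single term $\AP = \sfA = \sfF_s a$, and \eqref{Eq-spreading-function-G-A_P} becomes $\eta(\dGab) = N\,\sfA \cdot V_{g_1}g_2$. Applying \eqref{Eq-Fs-property} with $f = \mathbf{1}$ and $\hat g = \cF_N(h/s)$ (so $g = h/s$, where $s := \overline{\cI g_1}\ast g_2$) together with $\cF_N\mathbf{1} = N\delta$ gives
\begin{equation*}
\sfA(u,v) = \frac{1}{N^2}\,\sfF_s\bigl(\mathbf{1}\otimes \cF_N(h/s)\bigr)(u,v) = \frac{1}{N}\cdot\frac{h(u)}{s(u)}\,\delta(v).
\end{equation*}
Multiplying by $N\,V_{g_1}g_2(u,v)$, the factor $\delta(v)$ annihilates all $v \neq 0$, while at $v = 0$ the value $V_{g_1}g_2(u,0) = s(u)$ cancels against $1/s(u)$, leaving $h(u)\delta(v) = \eta_H(u,v)$, as needed.

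There is no substantive obstacle: the global nonvanishing assumption on $\overline{\cI g_1}\ast g_2$ is exactly what makes the quotient $h/s$ pointwise well-defined on all of $\bZ_N$, which is what allows the closed-form symbol in \eqref{eq:relGabMulSTFT} to be written without case distinctions. One could alternatively read off \eqref{eq:relGabMulSTFT} by specializing the formula $a = \frac{\alpha\beta}{N^2}\bigl(\mathbf{1}\otimes \cF_N(h/(C\cdot V))\bigr)$ from the final lines of the proof of Theorem \ref{th:representation1}, noting that $C \equiv 1$ and $V = s$ on $\supp(h)$ when $\alpha = \beta = 1$.
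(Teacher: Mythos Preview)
Your proposal is correct and the core argument---computing the symplectic Fourier transform of the proposed symbol, collapsing the periodization to $\AP=\sfA$ since $\alpha=\beta=1$, and matching spreading functions via $V_{g_1}g_2(u,0)=(\overline{\cI g_1}\ast g_2)(u)$---is exactly the paper's own proof. Your preliminary invocation of Theorem~\ref{th:representation1} for abstract existence is harmless but redundant, since the direct spreading-function verification you carry out afterwards already establishes $H=\dGab$ without appeal to the general case.
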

\begin{proof}
	Let us observe that, since $\a=\b=1$, we have $\sfA=\AP$, see \eqref{Eq-sym-Fourier-a} and \eqref{Eq-periodization-sfA-2}. Taking $a$ as in \eqref{eq:relGabMulSTFT}, recalling $\overline{\cI g_1} \ast g_2(\cdot)=V_{g_1}g_2(\cdot,0)$ and $\cF_N(N^{-1}\mathbf{1})(v)=\delta(v)$, we compute
	\begin{align*}
		\sfA(u,v)=\sfF_s a(u,v)=\frac{1}{N} \sfF_s\left(\frac1N\mathbf{1}\otimes \cF_N \left( \frac{h(\cdot)}{V_{g_1}g_2(\cdot,0)} \right)\right)(u,v)=\frac{1}{N}\frac{h(u)}{V_{g_1}g_2(u,0)}\cdot\delta(v).
	\end{align*}
	Similarly to what done in the proof  of Theorem \ref{th:representation1}, $H$ and $\dGab$ coincide if their spreading functions do; on account of the previous computation we get
	\begin{equation*} 
		h\otimes\delta(u,v)=N\sfA(u,v)V_{g_1}g_2(u,v)=\frac{h(u)}{V_{g_1}g_2(u,0)}\delta(v)V_{g_1}g_2(u,v)
	\end{equation*}
	which is true since $V_{g_1}g_2(u,0)=\overline{\cI g_1} \ast g_2(u)\neq 0$ for every $u$. This concludes the proof.
\end{proof}

This means, given window functions, for which the convolution (up to $\cI$ and a conjugation) is non-zero on the support of the impulse response $h$, a LTI filter $H$ can always be represented exactly as Gabor multiplier $\dGab$. The error between the LTI filter and the Gabor multiplier is the error introduced through subsampling of the mask $a$. The representation is always possible if we allow for the degenerate case of $g_1=g_2=\mathbf{1}$. We should, however, keep in mind that if we want to have a meaningful parameter set for applications this is, after all, a very strong  condition on the smoothness of $\hat{h}$. Even if met, for applications, the exact representation is not too well suited due to poor calculation efficiency and bad numerical behaviour for $\overline{\cI g_1} \ast g_2$ close to zero.  \\
\par
Knowing from Theorem \ref{th:representation2} that every LTI filter  with bandlimited impulse response $h$ can be represented as Gabor multiplier, we are now turning the focus to the opposite direction,  asking whether it is clear that a Gabor multiplier having a mask constant in time is equivalent to a LTI filter.  Reading equation \eqref{eq:relGabMulSTFT} the other way round, we see implicitly that a Gabor multiplier with time invariant symbol is a convolution operator. The frequency response of this convolution operator, however, is not exactly equal to the frequency mask of the Gabor multiplier but smoothed by a convolution with the Fourier transform of the window functions. In Figure \ref{fig:STFTRepresentation} a visual representation can be found. Smooth window functions  have the advantage of preserving the edges of the frequency mask rather well at the cost of a longer time delay needed in return. Theorem \ref{th:kernelSA} formalizes this fact. 


\begin{figure} [ht]

\leftskip-0.5cm
	\includegraphics[scale=0.5]{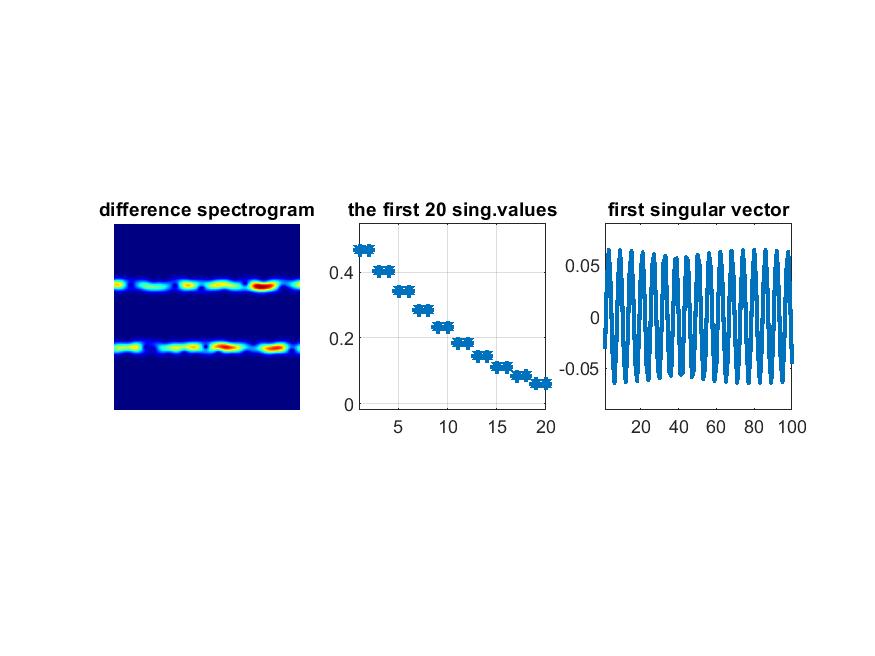}

	\caption{The figure shows the effect of implementing a Gabor (STFT) multiplier with mask $a= \mathbf{1}\otimes \chi_{\Omega} $, $\Omega=[-R,R]$, with $R=80$ and $N=480$. The resulting operator is still an LTI operator as long as no subsampling is performed ($\a=\b=1$), but now looking at the difference of the spectrograms given in the first plot, which is strongly concentrated around the cut-off frequency.
		The central plot shows the $20$ largest singular values of the difference between the implemented STFT multiplier and the perfect low pass filter. In the last plot, we show only a segment of the first singular vector of the difference, to demonstrate  the high regular oscillations.}
	\label{fig:STFTRepresentation}

\end{figure}

\begin{theorem}\label{th:kernelSA}
	Consider a Gabor multiplier $\dGab$ with no time subsampling, i.e. $\a=1$, windows $g_1, g_2 \in \bC^N$ with $g_1$ symmetric and symbol 
	\begin{equation}
		a =  \mathbf{1} \otimes \hat{h} 
	\end{equation}
for some $\hat{h}\in\bC^N$. Then, it is also a LTI filter with impulse response
\begin{equation}\label{Eq-imp-resp-GabMul}
	\frac1\b\sum_{k=0}^{\b-1}h(\cdot+Bk) (\overline{g_1}\ast g_2)(\cdot).
\end{equation}
\end{theorem}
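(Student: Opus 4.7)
The natural approach is to compare the spreading functions of $\dGab$ and of an LTI filter. Recall from \eqref{Eq-spreading-function-H} that any LTI filter $H$ with impulse response $\tilde h$ has spreading function $\eta_H = \tilde h \otimes \delta$, and since the spreading representation is a bijection between operators on $\bC^N$ and matrices in $\bC^{N\times N}$, identifying a tensor-product structure of the form $\tilde h\otimes\delta$ for $\eta(\dGab)$ simultaneously certifies that $\dGab$ is an LTI filter and reads off its impulse response from the first coordinate factor.

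First, I would compute $\sfA = \sfF_s a$. Since $a = \mathbf{1}\otimes \hat h$, an immediate application of \eqref{Eq-Fs-property} yields $\sfA = h\otimes\hat{\mathbf{1}}$, and because $\dFou(N^{-1}\mathbf{1}) = \delta$ we have $\hat{\mathbf{1}} = N\delta$, so $\sfA(u,v) = Nh(u)\delta(v)$. In particular $\sfA$ already has the desired tensor structure with a $\delta$-factor in the frequency coordinate, which is the essential point that will propagate through the rest of the argument.

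Next, I would substitute this into the periodization formula \eqref{Eq-periodization-sfA}. With $\a = 1$ the outer index $l$ collapses and one obtains
\[
\AP(u,v) = \sum_{k=0}^{\b-1} \sfA(u + Bk, v) = N\delta(v)\sum_{k=0}^{\b-1} h(u+Bk).
\]
Plugging this into \eqref{Eq-spreading-function-G-A_P} and using that the factor $\delta(v)$ forces $v=0$ in the remaining STFT factor $V_{g_1}g_2(u,v)$, the spreading function of the Gabor multiplier takes the tensor form $\eta(\dGab)(u,v) = \tilde h(u)\otimes\delta(v)$ with
\[
\tilde h(u) = C\cdot V_{g_1}g_2(u,0) \sum_{k=0}^{\b-1} h(u+Bk),
\]
for an explicit normalization constant $C$ depending on $N$ and $\b$ arising from the definitions of $\sfF_s$ and of $\AP$.

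The final step is to simplify $V_{g_1}g_2(u,0)$ using the symmetry hypothesis on $g_1$. By the definition of the STFT, $V_{g_1}g_2(u,0) = \sum_m g_2(m)\overline{g_1(m-u)}$, and the identity $g_1(m-u) = g_1(u-m)$ rewrites this as $(\overline{g_1}\ast g_2)(u)$. Comparing the resulting tensor-product expression with \eqref{Eq-spreading-function-H} and invoking injectivity of the spreading representation identifies $\dGab$ with the LTI filter whose impulse response is $\tilde h$, yielding \eqref{Eq-imp-resp-GabMul}. The main obstacle is bookkeeping: the normalization constants $N$, $1/\b$ and the $\hat{\mathbf{1}} = N\delta$ factor must be tracked consistently through the chain $\sfF_s \to \AP \to \eta$ in order to match the stated formula for the impulse response exactly.
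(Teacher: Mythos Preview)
Your approach is correct but genuinely different from the paper's. The paper never touches the spreading function in this proof: it works directly with the kernel formula \eqref{Eq-matrix-dGab}, factoring the double sum as
\[
K(\dGab)(u,v)=\Bigl(\sum_{l=0}^{B-1}\hat h(\b l)e^{2\pi i\b l(u-v)/N}\Bigr)\Bigl(\sum_{k=0}^{N-1}\overline{g_1(v-k)}g_2(u-k)\Bigr),
\]
then uses the Poisson-type identity \eqref{Eq-F_N-characteristic} on the first factor to obtain $\tfrac1\b\sum_k h(u-v+Bk)$, and the symmetry of $g_1$ on the second factor to obtain $(\overline{g_1}\ast g_2)(u-v)$. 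The LTI structure is then read off from the fact that $K(\dGab)(u,v)$ depends only on $u-v$.

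Your route via $\sfF_s a$, $\AP$ and \eqref{Eq-spreading-function-G-A_P} is more structural: it recycles the spreading-function formula already proved for general Gabor multipliers and reduces the argument to the one-line observation $\sfA=Nh\otimes\delta$, so that the $\delta$-factor propagates through $\AP$ and forces $\eta(\dGab)=\tilde h\otimes\delta$. This is arguably cleaner conceptually, and it makes transparent \emph{why} the operator is LTI (the symbol has a $\delta$ in the second slot of its symplectic transform). The price, as you note, is that the constants $N$, $N/(\a\b)$ and $\hat{\mathbf 1}=N\delta$ must be tracked through three layers of definitions, whereas the paper's kernel computation produces the factor $1/\b$ in a single Poisson-summation step. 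Both arguments use the symmetry of $g_1$ in exactly the same way to rewrite $V_{g_1}g_2(\cdot,0)$ as $\overline{g_1}\ast g_2$.
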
 
\begin{proof}
	We start from the kernel representation of the Gabor multiplier \eqref{Eq-matrix-dGab} with $\a=1$
	\begin{align}
	K(\dGab)(u,v)&=\sum_{k=0}^{N-1}\sum_{l=0}^{B-1}a( k,\b l)\overline{g_1(v- k)}g_2(u- k)e^{\frac{2 \pi i \b l (u-v)}{N}}\notag\\
		&=   \sum_{k=0}^{N-1} \sum_{l=0}^{B-1} \hat{h}(\b l) \overline{g_1(v-  k)}  g_2(u- k)e^{\frac{2\pi i \b l(u-v)}{N}} \notag  \\
		&= \sum_{l=0}^{B-1} \hat{h}(\b l) e^{\frac{2\pi i \b l(u-v)}{N}} \sum_{k=0}^{N-1}\overline{g_1(v- k)}  g_2(u- k). \label{eq:summation}
	\end{align}
	Fixing $v\in\{0,\ldots,N-1\}$, performing the change of variable $t=v-k$ and using the symmetry of $g_1$, we write the second factor as 
	\begin{align*}
		\sum_{k=0}^{N-1}\overline{g_1(v- k)}  g_2(u-k) &=\sum_{t=0}^{N-1}\overline{g_1(t)}  g_2(u-v+t)\\
		& =\sum_{t=0}^{N-1}\overline{g_1(t)}  g_2(u-v-t)\\
		&=(\overline{g_1}\ast g_2)(u-v).
	\end{align*}
 For the first factor in \eqref{eq:summation}, using \eqref{Eq-F_N-characteristic}:
 	\begin{align*}
 		\sum_{l=0}^{B-1} \hat{h}(\b l) e^{\frac{2\pi i \b l(u-v)}{N}}&=\cF^{-1}_N(\hat{h}\cdot\chi_{\b\bZ_N})(u-v)\\
 		&=\left(\cF^{-1}_N\hat{h}\ast\cF_N^{-1}\chi_{\b\bZ_N}\right)(u-v)\\
 		&=\sum_{k=0}^{N-1}h(u-v-k)\frac{B}{N}\chi_{B\bZ_N}(-k)\\
 		&=\frac1\b\sum_{k=0}^{\b-1}h(u-v+Bk).
 	\end{align*}
 Eventually we get
	\begin{equation}\label{eq:LTIGMImpulseResponse}
		K(\dGab)(u,v)= \frac1\b\sum_{k=0}^{\b-1}h(u-v+Bk) (\overline{g_1}\ast g_2)(u-v)
	\end{equation}
	and the result follows by \eqref{Eq-matrix-H}.
\end{proof}
We observe that the convolution in \eqref{Eq-imp-resp-GabMul} is the restriction of $V_{g_1}g_2$ to the time-axis, since we are considering a symmetric window $g_1$.\\
It is important to note that the LTI property is only valid in case of no time subsampling. In the case of a common Gabor multiplier with $\a>1$, in contrast, the second sum in equation \eqref{eq:summation} would depend on $u$ and be $\a-$periodic, explicitly:
\begin{equation*}
	\sum_{k=0}^{A-1}\overline{g_1(v- \a k)}  g_2(u-\a k). 
\end{equation*}
Therefore as soon as we have time domain subsampling of the signal, the LTI property of the operator is lost even though the mask being constant in time. 


As already mentioned, it becomes apparent that an LTI filter can be considered as a special case of  a Gabor multiplier with degenerated window functions $g_1=g_2=\mathbf{1}$. We want to put emphasis also on the interconnection between sharp frequency cut off of the filter and smoothness of the window functions corresponding to a time delay in filtering. Condition 1) of Theorem  \ref{th:representation1} requires the impulse response $h$ to have a faster decay than $\overline{\cI g_1}\ast g_2$. This means that in case we want to have a sharp cut off in the frequency filter $\hat{h}$, which corresponds to a slow decay in $h$, we have to choose a smooth window function which corresponds to a large time lag.

\section*{Acknowledgements}
The work of P. Balazs was supported by the OeAW Innovation grant FUn ("Frames and Unbounded Operators"; IF\_2019\_24\_Fun)  and the Austrian Science Fund (FWF) project P 34624 "Localized, Fusion and Tensors of Frames" (LoFT) .\\
F. Bastianoni and E. Cordero have been supported by the Gruppo Nazionale per l’Analisi Matematica, la Probabilità e le loro Applicazioni (GNAMPA) of the Istituto Nazionale di Alta Matematica (INdAM).


\begin{thebibliography}{99}
	\bibitem{Ruzh2020} Akylzhanov, R. and M. Ruzhansky. 
	\newblock $L^p$-$L^q$ multipliers on locally compact groups. 
		\newblock {\em J. Funct. Anal.}, 278(3), 2020, https://doi.org/10.1016/j.jfa.2019.108324
		
\bibitem{Balazs2008}		
P. Balazs. 
\newblock Hilbert-Schmidt operators and frames - classification, best approximation by mul-
tipliers and algorithms. 
\newblock {\em Int. J. Wavelets Multiresolut. Inf. Process.}, 6(2):315 – 330, 2008.


\bibitem{xllmult1}
P. Balazs. 
\newblock Basic Definition and Properties of {B}essel Multipliers.
\newblock {\em J. Math. Anal. Appl.}, 1(325), 571--585, 2007.

\bibitem{BalBayRah2012}
P. Balazs, D. Bayer, and A. Rahimi.  
\newblock Multipliers for continuous frames in Hilbert spaces. 
\newblock {\em J. Phys. A-Math. Theor.}, Special issue: Coherent states(45):244023, 2012.
		
\bibitem{BalHolNecSto2017}
P. Balazs, N. Holighaus, T. Necciari and D. Stoeva.
\newblock Frame Theory for Signal Processing in Psychoacoustics.
\newblock In: R. Balan, J. Benedetto, W. Czaja, M. Dellatorre, K. Okoudjou (eds) {\em Excursions in Harmonic Analysis, Volume 5}. Applied and Numerical Harmonic Analysis. Birkhäuser, Cham, 2017.
		
\bibitem{badelano00}	P. Balazs, A. Noll, W. A. Deutsch, and B. Laback.
\newblock Concept of the integrated signal analysis software system STX.
	In OPG 2000 (\emph{proceedings CD}), 2000.
	 
\bibitem{balsto09new} 
P. Balazs and D. Stoeva.
\newblock Invertibility of multipliers.
\newblock {\em Appl. Comput. Harmon. Anal.}, 2(33), 292--299, 2012
	 
\bibitem{BasCor2021}	 
F.~Bastianoni and E.~Cordero.
\newblock Quasi-Banach modulation spaces and localization operators on locally compact abelian groups. \newblock {\em Submitted}. ArXiv:2107.10352v2.
	 

\bibitem{BenGraGroOko2005}
A. B\'enyi , L. Grafakos, K. Gröchenig and K. Okoudjou.
\newblock A class of Fourier multipliers for modulation spaces.
\newblock {\em {A}ppl. {C}omput. {H}armon. {A}nal.}, 1(19), 131--139, 2005.
		
\bibitem{beok20} A. B\'enyi and K. A. Okoudjou. \newblock {\em Modulation Spaces}. Springer (Birkh\"auser), New York, 2020.

\bibitem{Berezin71}
	F.~A. Berezin.
	\newblock Wick and anti-{W}ick symbols of operators.
	\newblock {\em Mat. Sb. (N.S.)}, 86(128):578--610, 1971.
	

\bibitem{ch16}
O.~{C}hristensen.
\newblock {\em {A}n {I}ntroduction to {F}rames and {R}iesz {B}ases}.
\newblock {A}pplied and {N}umerical {H}armonic {A}nalysis. {B}irkh{\"a}user
{B}asel, {S}econd edition, 2016.

\bibitem{E-Wigner-QBanach} E. Cordero. Note on the Wigner distribution and Localization Operators in the quasi-Banach setting. In {\em Anomalies in Partial Differential Equations}, M. Cicognani et al. (eds.), Springer INdAM Series 43:149--166, 2021.

\bibitem{EleCharly2003}
	E.~Cordero and K.~Gr\"ochenig.
	\newblock Time-frequency analysis of localization operators.
	\newblock {\em J. Funct. Anal.}, 205(1):107--131, 2003.

\bibitem{Wignersharp2018}
	E.~Cordero and F.~Nicola.
	\newblock Sharp integral bounds for {W}igner distributions.
	\newblock {\em Int. Math. Res. Not. IMRN}, (6):1779--1807, 2018.
	
\bibitem{Elena-book}
		E. Cordero and L. Rodino. \emph{ Time-Frequency Analysis of Operators}, De Gruyter Studies in Mathematics, 2020. ISBN 978-3-11-053035-3.

\bibitem{Daube88}
	I.~Daubechies.
	\newblock Time-frequency localization operators: a geometric phase space
	approach.
	\newblock {\em IEEE Trans. Inform. Theory}, 34(4):605--612, 1988.

\bibitem{CF78}
	A.~C{\'o}rdoba and C.~Fefferman.
	\newblock Wave packets and {F}ourier integral operators.
	\newblock {\em Comm. Partial Differential Equations}, 3(11):979--1005, 1978.

\bibitem{deGossonsymplectic2011}
	M.~A. de~Gosson.
	\newblock {\em Symplectic methods in harmonic analysis and in mathematical
		physics}, volume~7 of {\em Pseudo-Differential Operators. Theory and
		Applications}.
	\newblock Birkh\"auser/Springer Basel AG, Basel, 2011.

\bibitem{DT2010}
	M. D\"{o}rfler and B. Torr\'{e}sani.
		\newblock{Representation of operators in the time-frequency domain and
			generalized {G}abor multipliers},
	{\em J. Fourier Anal. Appl.},
	16(2):261--293, 2010.
	
\bibitem{Nina}
	N.~Engelputzeder.
	\newblock {\em Linear Time Variant Systems and Gabor Riesz Bases}.
	\newblock PhD thesis, University of Vienna, 2012.
	
\bibitem{fei0}
	H. G. Feichtinger.
	\newblock Un espace de {B}anach de distributions temp\'{e}r\'{e}es sur les	groupes localement compacts ab\'{e}liens.
	\newblock {\em C. R. Acad. Sci. Paris S\'{e}r. A-B}, 290(17):A791--A794,
	1980.

\bibitem{fei1981} H. G. Feichtinger. Banach spaces of distributions of Wiener's type and interpolation. In P. Butzer, S. Nagy, and E. G\"{o}rlich, editors,
\newblock {\em Proc. Conf. Functional Analysis and Approximation, Oberwolfach August 1980, number 69 in Internat. Ser. Numer. Math.}, 153--165. Birkh\"{a}user Boston, Boston, MA, 1981.


\bibitem{fei1983} H. G. Feichtinger. Banach convolution algebras of Wiener type. In {\em Proc. Conf. 	on Functions, Series, Operators, Budapest 1980, volume 35 of Colloq. Math. Soc.
	Janos Bolyai},  509--524. North-Holland, Amsterdam, Eds. B. Sz.-Nagy and J.
		Szabados. edition, 1983.

\bibitem{feichtinger-modulation}
	H.~G. Feichtinger.
	\newblock Modulation spaces on locally compact abelian groups.
	\newblock In {\em Technical report, University of Vienna, 1983, and also in
		``Wavelets and Their Applications''}, pages 99--140. M. Krishna, R. Radha, S.
	Thangavelu, editors, Allied Publishers, 2003.
	
\bibitem{Feichtingerkerneltheorem1997}
		H. G. Feichtinger and K. Gr\"{o}chenig.
	\newblock{Gabor frames and time-frequency analysis of distributions},
		\newblock {\em J. Funct. Anal.}, 146(2):464--495, 1997.
	
\bibitem{FeiKoz1998}
H.~G. Feichtinger and W. Kozek.
\newblock Quantization of TF lattice-invariant operators on elementary LCA groups. 
\newblock In H. G. Feichtinger and T. Strohmer, editors, {\em Gabor analysis and algorithms}, Appl. Numer. Harmon. Anal., pages 233–266. Birkh\"{a}user Boston, Boston, MA, 1998.

\bibitem{fekolu09}
H.~G. {F}eichtinger, W.~{K}ozek, and F.~{L}uef.
\newblock {G}abor {A}nalysis over finite {A}belian groups.
\newblock {\em Appl. Comput. Harmon. Anal.}, 26(2):230--248, 2009.



\bibitem{FN2005}
		H.~G. Feichtinger and G.~Narimani.
		\newblock Fourier multipliers of classical modulation spaces.
		\newblock {\em Appl. Comput. Harmon. Anal.}, 21:349--359, 2006.
		
	\bibitem{Fei-Now02}
	H.~G. Feichtinger and K.~Nowak.
	\newblock A {F}irst {S}urvey of {G}abor {M}ultipliers.
	\newblock In H.~G. Feichtinger and T.~Strohmer, editors, {\em Advances in
		{G}abor Analysis}. Birkh{\"a}user, Boston, 2002.
	
	\bibitem{Folland} G. B. Folland.
	\emph{Real Analysis: Modern Techniques and Their Applications},
	John Wiley \& Sons Inc., New York, 1999.
	
	\bibitem{folland89}
	G.~B. Folland.
	\newblock {\em Harmonic Analysis in Phase Space}.
	\newblock Princeton Univ. Press, Princeton, NJ, 1989.
	
		\bibitem{Gibson2013}
	P.~C.~Gibson, M.~P.~Lamoureux and G.~F.~Margrave.
	\newblock Representation of Linear Operators by Gabor Multipliers.
	\newblock In: T.~D.~Andrews, R.~Balan, J.~J.~Benedetto, W.~Czaja and K.~A.~Okoudjou (eds) {\em Excursions in Harmonic Analysis, Volume 2: The February Fourier Talks at the Norbert Wiener Center}, Birkh{\"a}user, Boston, 229--250, 2013.
	
	\bibitem{go62}
	I.~{G}ood.
	\newblock {A}nalogues of {P}oisson's summation formula.
	\newblock {\em Amer. Math. Monthly}, 69:259--266, 1962.
	
	
	
	\bibitem{grochenig} K. Gr\"ochenig. {\it Foundation of Time-Frequency Analysis}. Birkh\"auser, Boston MA, 2001.
	

			\bibitem{Heil} 
			C. Heil. 
		\emph{Introduction to real analysis},
		\newblock Graduate Texts in Mathematics, Vol. 280,
		Springer, Cham, 2019.
		
		\bibitem{Hormander1960}
		L. H\"{o}rmander.
		\newblock{Estimates for translation invariant operators in {$L^{p}$}
				spaces},
			\newblock {\em Acta Math.}, 104:93--140, 1960.
			
		\bibitem{hu92-2}	
		S.P.~Huestis.
		\newblock{Optimum kernels for oversampled signals}.
		\newblock {\em {T}he {J}ournal of the {A}coustical {S}ociety of {A}merica} 92(2):1172--1173, 1992.
		
		\bibitem{ja18}
		M.~S. {J}akobsen.
		\newblock {O}n a (no longer) {N}ew {S}egal {A}lgebra: a review of the
		{F}eichtinger algebra.
		\newblock {\em J. Fourier Anal. Appl.}, 24(6):1579--1660, 2018.
		
		\bibitem{lu09-2} 
		B.~{L}uong. 
		\newblock {\em {F}ourier {A}nalysis on {F}inite {A}belian {G}roups}. 
		\newblock {B}irkh{\"a}user, 2009. 
		
		\bibitem{OppSch99}
		A.~V.~Oppenheim and R.W. Schafer
		\newblock {\em Discrete-Time Signal Processing}.
		\newblock Oldenbourg, 1999.
		
		\bibitem{oppenheim}
		A.~V.~Oppenheim, A.~S.~Willsky and S.~Hamid.
		\newblock {\em Signals and Systems}.
		\newblock Pearson, Harlow, 1996.
		
		\bibitem{Pfander2013}
		G.E. Pfander. 
		\newblock {Gabor Frames in Finite Dimensions}.
		\newblock {In: P. Casazza, G. Kutyniok (eds) {\em Finite Frames. Applied and Numerical Harmonic Analysis}}. Birkhäuser, Boston, 2013.	
			
			\bibitem{RT93}
			J.~Ramanathan and P.~Topiwala.
			\newblock Time-frequency localization via the {W}eyl correspondence.
			\newblock {\em SIAM J. Math. Anal.}, 24(5):1378--1393, 1993.
			
			\bibitem{Toetalii} 
			A.M. Kreme,  V. Emiya, C. Chaux, and B. Torr\'{e}sani.
			\newblock Filtering Out Time-Frequency Areas Using Gabor Multipliers,  ICASSP IEEE Int Conf Acoust Speech Signal Process Proceedings, Volume 2020:5790-5794, 2020. 
						
			\bibitem{sc04-2}
			K.~Schnass.
			\newblock {\em {G}abor {M}ultipliers. {A} {S}elf-contained {S}urvey}.
			\newblock Master thesis, {N}u{H}{A}{G}, 2004.
			
  \bibitem{Shubin91}
  M.~A. Shubin.
  \newblock {\em Pseudodifferential Operators and Spectral Theory}.
  \newblock Springer-Verlag, Berlin, second edition, 2001.
  \newblock Translated from the 1978 Russian original by Stig I. Andersson.
  
  \bibitem{Sjostrand1} J. Sj\"ostrand.
   An algebra of pseudodifferential
  operators. \emph{Math. Res. Lett.}, 1:185--192, 1994. 
  
  \bibitem{stein}
  E. M. Stein.
  \newblock {\it Singular integrals and differentiability properties of functions}.
  \newblock Princeton University Press, Priceton, 1970.
  
  \bibitem{stta05}
  T.~Strohmer and J.~Tanner.
  \newblock {{I}mplementations of {S}hannon's sampling theorem, a time-frequency approach}.
\newblock {\em {S}ampl. {T}heory {S}ignal {I}mage {P}rocess}, 4(1):1--17, 2005.
  
  \bibitem{triebel2010theory}
  H. Triebel.
  \newblock\emph{Theory of Function Spaces},
  Modern Birkh{\"a}user Classics, Springer Basel, 2010.
  
%
  	
  \bibitem{Wong1999}
  M.~W. Wong.
  \newblock {\em Localization operators}.
  \newblock Seoul National University Research Institute of Mathematics Global
  Analysis Research Center, Seoul, 1999.
  
  \bibitem{Weisz2008}
  F. Weisz.
  \newblock Multiplier theorems for the short-time {F}ourier transform.
  	\newblock \emph{Integral Equations Operator Theory}, 60(1):133--149,2008.
%

  	\bibitem{WongLocalization}
  M.~W. Wong.
  \newblock {\em Wavelet transforms and localization operators}, volume 136 of
  {\em Operator Theory: Advances and Applications}.
  \newblock Birkh\"auser Verlag, Basel, 2002.
  
 \bibitem{Zala2017} S. M. Zala, D. Reitschmidt, A. Noll, P. Balazs, and D. Penn.
 \newblock Automatic mouse ultrasound detector (a-mud): A new
  tool for processing rodent vocalizations. PLOS ONE, 2017.
\end{thebibliography}
\end{document}